\definecolor{darkblue}{rgb}{0.0,0.0,0.6}
\definecolor{darkgreen}{rgb}{0.0,0.6,0.0}
\numberwithin{table}{section}    
\numberwithin{figure}{section}   
\numberwithin{equation}{section} 
\newtheorem{assumption}[theorem]{Assumption}
\newtheorem{remark}[theorem]{Remark}
\newcommand{\ro}[1]{\textcolor{black}{#1}}
\renewcommand{\bl}[1]{\textcolor{black}{#1}}
\newcommand{\laz}{\lambda_0}
\newcommand{\lao}{\lambda_1}
\newcommand{\wE}{\widetilde \EE}
\newcommand{\lo}{L^2(\O)}
\newcommand{\hon}{H^1(\O)}
\newcommand{\hoon}{H^{1}(\O)^\ast}
\newcommand{\hy}{H^1_0(0,T;Y)}
\renewcommand{\o}{\omega}
\newcommand{\V}{\mathcal{V}}
\newcommand{\hhyy}{H^1_0(0,T;Y)}
\newcommand{\llun}{L^2(0,T;Y^\star)}
\newcommand{\F}{(\kappa \circ \HH)}
\newcommand{\e}{\epsilon}
\begin{document}

\title{Existence and uniqueness of solutions to \ro{rate-independent} systems with history variable}
\date{\today}
\author{Livia\ Betz\footnotemark[1]}
\renewcommand{\thefootnote}{\fnsymbol{footnote}}
\footnotetext[1]{Institut f\"ur Mathematik, Universit\"at W\"urzburg, 97074 W\"urzburg, Germany}
\renewcommand{\thefootnote}{\arabic{footnote}}
\maketitle

\begin{abstract}
This paper investigates \ro{rate-independent} systems (RIS), where the dissipation functional depends not only on the rate but also  on the \textit{history} of the state. The latter is expressed in terms of an integral operator. We  establish an existence result for the original problem and for the control thereof, without resorting to smallness assumptions. Under a  smoothness condition, we  prove the uniqueness of solutions to a certain class of \ro{history-dependent} RIS \ro{where the subdifferential of the dissipation potential is an unbounded operator}. In this context, we derive an essential estimate that opens the door to future research on the topic of optimization.\end{abstract}

\begin{keywords}
\ro{Rate-independent} systems, history dependence, subdifferential inclusion,  integral operator, optimal control, fatigue damage evolution. \end{keywords}

\begin{AMS}
34G25,  49J40, 49J53, 74P99, 74R99
\end{AMS}
\section{Introduction}\label{sec:i}

This paper is concerned with the analysis of the following \textit{\ro{history-dependent}},  \textit{\ro{rate-independent}} inclusion \begin{equation}\label{eq:n}
-\partial_q \EE(t,q(t)) \in \partial_2 \RR (\HH(q)(t),\dot{q}(t)) \quad \text{in }Y^\ast, \quad \ae (0,T),
  \end{equation}where $\EE:[0,T] \times Y \to \R$ is a \ro{smooth, uniformly convex} energy; \ro{for the precise requirements we refer to Assumption \ref{it:0}.} In \eqref{eq:n}, $T>0$ is fixed, the symbol $\partial_{2}$ denotes the  subdifferential  with respect to\ the second variable and $Y$ is a Hilbert space. 
For a Banach space $X$,  the  dissipation potential  $\RR:X \times  Y \rightarrow [0,\infty]$ is 
supposed to  be convex and positively homogeneous of degree 1  with respect to its second argument. It also satisfies other standard requirements,   a Lipschitz condition, and it has certain convergence properties with respect to both arguments, see Assumptions \ref{it:st1} and \ref{assu:r} below. 
{These 
allow for dissipation functionals arising in the modelling of \ro{history-dependent} contact problems or in the dual formulation of \ro{history-dependent} sweeping processes, see Remark \ref{rem:r}. \ro{Functionals with unbounded subdifferential} are also included in our analysis, and thus, the results apply to dissipation potentials that are typical for damage models with fatigue (Remark \ref{rem:r} and section  \ref{u0}).}
The positive homogeneity with respect to\ the second argument of $\RR,$ i.e., $\RR(\zeta,\gamma\eta)=\gamma \RR(\zeta,\eta)$ for all $\gamma \geq 0$ and for all $(\zeta,\eta)\in X \times  Y$, shows that our model is \ro{rate-independent}, i.e., the solutions to \eqref{eq:n} are not affected by time-rescaling.
The differential inclusion  \eqref{eq:n}  has the particular feature that it includes  \textit{the integral operator $\HH$}, which acts on the unknown state. This accounts for history dependence and it  is given by
\begin{equation}\label{v-op}
y \mapsto [0,T] \ni t \mapsto \HH(y)(t):=\int_0^t  {B(t-s,y(s))} \,ds +y_0 \in X,\end{equation}where  {$B:[0,T]\times Y \to X$ is a non-linear mapping satisfying certain regularity assumptions. These involve Lipschitz continuity w.r.t.\,the second variable and differentiability w.r.t.\,the time variable.}
For the precise requirements on the quantities appearing in the  definition of $\HH$ we refer to Assumption \ref{it:1} below.

The phenomenon of \textit{history dependence} in connection with differential inclusions has gained much interest in the past decades, see  e.g.\,\cite{mon} and the references therein. \ro{Rate-dependent} (viscous) problems that take the history of the state into account were tackled for instance in \cite{zv, mig_sof, sofonea, aos, jnsao}. However, when it comes to the mathematical investigation of  \textit{\ro{rate-independent} systems} (RIS) with a history component, the literature is scant. The only work known to the author dealing with this topic with regard to a rigorous analysis that involves existence of solutions is \cite{alessi}, where a fatigue damage model is investigated in two dimensions. We point out that the present paper covers the existence (and sometimes the uniqueness) of solutions to some of the  \ro{rate-independent} (non-viscous) counterparts of the aforementioned \ro{rate-dependent} problems. In particular, we show existence of solutions for  \ro{rate-independent} versions of models from \cite[Chp.\,10.4]{sofonea}, see Remark \ref{rem:r}, and unique solvability for the RIS \ro{associated with }\cite[Sec.\,4.1]{aos}, see section \ref{u0}. The case where the moving sets from \cite{zv, mig_sof} are fixed closed convex cones is treated as well, in an indirect manner, cf.\,Remark \ref{rem:r0}.

While  numerous RIS have been 
  addressed in the literature, see for instance the references in \cite{mielke}, 
   only a few deal with a dissipation potential that has \textit{two} arguments. The particularity of  such dissipation functionals is that they do not  depend only on the rate, but also on the state itself \cite{mie_ros} or they may depend on the time variable  in terms of a moving set \cite{kurzweil}. 
By contrast, the \ro{dependence}  of $\RR$ on an additional variable takes place via the  {integral} operator $\HH$, cf.\,\eqref{eq:n}. This aspect will be exploited in the analysis of the problem \eqref{eq:n} with regard to existence, uniqueness and optimal control.
As we will see, the presence of the history operator will help us prove 
satisfying results under  far \ro{fewer} assumptions than in other comparable contributions ({Remarks \ref{rem:bound} and \ref{rem:non}}).


The main results and novelties of the present  paper are:
\begin{itemize}
\item The existence of solutions for   \ro{rate-independent} models with history variable in terms of  {a (non-linear) integral} operator (Theorem \ref{ex});
\item The existence of optimal solutions for  minimization problems governed by this type of systems (Theorem \ref{ex_opt});
\item  A \textit{unique} solvability result for a special class of differential inclusions that involve  \ro{ dissipation functionals with unbounded subdifferential} (Theorem \ref{thm:lip});
\item A uniform Lipschitz estimate that will be essential in  a follow-up paper dealing with the optimal control of the aforementioned type of evolutions (Proposition \ref{lem:lip_c}).
\end{itemize}

The dependence of the dissipation potential on the state gives rise to relevant analytical difficulties both in the existence and in the uniqueness issue. 
When it comes to the existence result, the main challenge comes from the fact that we aim at  {addressing infinite-valued} dissipation functionals, where the additional dependence on the state is due to a general  {integral} operator. 
Indeed, the \ro{uniformly convex} energy in \eqref{eq:n} simplifies the analysis with regard to existence of solutions, in the sense that strong, i.e., absolutely continuous solutions are expected; note that these are often termed \textit{differential solutions} in the literature \cite[Def.\,3.3.2]{mr15}. 
In \cite{mt2004},  it was shown that such solutions exist for RIS with \textit{\ro{state-independent}} dissipation potential, provided that the energy is uniformly convex and smooth enough. RIS with convex energies and \textit{\ro{state-dependent}}  dissipation potential were investigated  in \cite{mie_ros}, where a smallness assumption is crucial for proving the existence of strong solutions. By contrast to the contribution \cite{mie_ros}, we do not need such a requirement, see Remark \ref{rem:bound} for a thorough explanation and comparison.  This is due to the fact that the \textit{state \ro{dependence}} in the first argument of $\RR$ comes from the \textit{integral operator} $\HH$.

The key result for proving existence of solutions to \eqref{eq:n} is provided by Lemma \ref{bound_k}. Its importance is twofold. First and foremost, it allows us to show uniform bounds (Proposition \ref{lem:est}) in a suitable space for  \textit{viscous} solutions, i.e, the solutions to 
\begin{equation}\label{eq:q10}
-\partial_q \EE(t,q(t)) \in \partial_2 \RR_\epsilon (\HH(q)(t),\dot{q}(t))\quad \text{in }Y^\ast  , \quad q(0) = \ro{q_0},
  \end{equation}a.e.\ in $(0,T)$, \ro{where $q_0 \in Y$}; for the precise definition of the viscous dissipation potential   $\RR_\epsilon$ \ro{associated with }the viscosity parameter $\e$ we refer to \eqref{def:r1}.
With Proposition \ref{lem:est} at hand, we can then  deduce the existence of solutions for \eqref{eq:n} (Theorem \ref{ex}) by passing to the limit $\e \searrow 0$ in \ro{the ``energy identity" associated with} \eqref{eq:q10}. Secondly, the same Lemma \ref{bound_k} enables us to establish  the boundedness of the multivalued solution map to \eqref{eq:n}, which is the essential tool for proving existence of optimal controls (Theorem \ref{ex_opt}).  {Here we maintain the discussion in a general setting. We point out that this allows for  minimization problems such as \begin{equation}\label{part}
 \left.
 \begin{aligned}
  \min \quad & \|q(T)-q_d\|^2_{Y}+\frac{1}{2}\|\ell\|^2_{H^1(0,T;Z)}\\
  \text{s.t.} \quad & (\ell,q) \in {{H^1_0(0,T;Z)}\times {H^1_0(0,T;Y)}},
  \\\quad & q \text{ solves }
\eqref{eq:n} \text{ with right hand side\ }\ell , \end{aligned}
 \quad \right\}
\end{equation}where $Z$ is a real reflexive Banach space so that $Z \embed  \embed Y^\ast$ and {$q_d \in H^1(0,T;Y)$ is a given desired state.} In applications, the  load $\ell$ plays the role of the control and it appears in the definition of the energy $\EE$. In the particular framework of \eqref{part}, the initial value of the state $q$ and of the control $\ell$ are zero, while one aims at bringing the final state as close as possible to the desired one.}

The  usual approach in the context of finding solutions for RIS with convex energies is  to resort to a time-discretization, see for instance  \cite{mt2004,mie_ros} and the references therein. However, our strategy relates to the classical \textit{\ro{vanishing-viscosity} approach} proposed by \cite{EM06}  and employed in various contributions, cf.\,e.g.\,\cite{MRS09b,  vv, vvv, MiZ09, LT11, knees, alessi, DDS11, DDS12, CL16}. All  these settings  deal with non-convex energies, such that the  \ro{vanishing-viscosity} technique is indispensable for showing the existence of so-called parametrized solutions. For other notions of weak solutions regarding RIS with non-convex energies we refer to \cite{mr15}. We underline that, despite of the convex nature of $\EE$, we also make use here of the \ro{vanishing-viscosity} approach, in view of  a follow-up paper. Therein, the aim will be to derive optimality conditions for the control of \eqref{eq:n}, by letting the viscosity parameter $\e$ tend to zero in a previously developed viscous optimality system \cite[Sec.\,4.1]{aos}, see Remark \ref{rem:fw} for more details.

Let us emphasize that one of the main novelties in the present   paper is on the topic of  \textit{uniqueness of solutions}. Here we are concerned with problems of the type
\begin{equation}\label{eq:n20}
  \begin{gathered}
-\partial_q \EE(t,q(t)) \in \partial_2 R (\HH(q)(t),\dot{q}(t)) \quad \ae (0,T), \quad q(0) = \ro{q_0},
  \end{gathered}
  \end{equation} 
where $R:\lo \times \hon \to [0,\infty]$ is given by 
 \begin{equation}\label{def:r0}
R(\zeta,\eta):=\left\{ \begin{aligned}\int_\Omega \kappa(\zeta) \,  \eta \;dx  &\quad \text{if }\eta \geq 0 \text{ a.e. in }\Omega,
\\\infty  &\quad \text{otherwise.}\end{aligned} \right.
\end{equation}
Such problems are motivated by damage models with fatigue \cite{alessi1} and our uniqueness result can be extended to other \ro{ dissipation potentials with unbounded subdifferential} as long as the linearity  with respect to their second argument is preserved, cf.\,Remark \ref{rem:lin}. 

While it is well-known  that  uniqueness of solutions  is not to be expected in RIS with non-convex energies, the situation changes when it comes to uniformly convex energies. If the dissipation functional depends only on the rate (\ro{state-independent}), uniqueness is guaranteed provided that the energy is smooth enough \cite{mt2004}. In the case of RIS with convex energy and \textit{\ro{state-dependent}} dissipation potential, uniqueness becomes again a delicate matter, as shown in \cite[Sec.\,5]{mie_ros}, where additional assumptions in terms of boundedness and smoothness of the dissipation functional are needed, see Remark \ref{rem:non} for details. Other related works are \cite{BS, TR}, where the uniqueness of (different notions of) solutions to quasi-variational sweeping processes is addressed. In \cite{BS}, the Lipschitz continuity of the gradient of the square of the Minkowski functional of the underlying set is required, as well as boundedness conditions, while in \cite{TR} additional smallness assumptions are needed to prove uniqueness.  We also mention  \cite{kurzweil}, which deals with uniqueness for \ro{rate-independent} Kurzweil processes, the dissipation functional being the Minkowski functional of a moving set. We underline that, by contrast, we do not impose bounds or smallness assumptions \ro{on the dissipation potential}. As explained  in Remark \ref{rem:non}, this is owed  to the fact that the state \ro{dependence} of $\RR$ happens through an \textit{integral operator.} In fact, our dissipation functional \ro{from \eqref{def:r0} is infinite-valued}. \ro{Besides some indispensable smoothness assumptions on the energy (Remark \ref{rem:u})}, we  only ask that the mapping $\kappa$ from \eqref{def:r0} satisfies a smoothness condition, cf.\,Assumption \ref{assu:k}. This seems to be the minimal  requirement \ro{on the dissipation potential} when it comes to proving uniqueness for quasi-variational evolutions \cite{BS, TR, mie_ros, kurzweil}, cf.\,Remark \ref{rem:non}.

The essential tool for the uniqueness result is the crucial estimate in Proposition \ref{lem:ess}. In the proof thereof, we exploit the linearity with respect to the second argument of the dissipation potential in \eqref{def:r0}, which allows  us to make use of derivation by parts formulae to obtain a suitable bound. This will enable us later  to apply Gronwall's inequality in the proofs of Theorem \ref{thm:lip} and Proposition \ref{lem:lip_c}. We refer to Remark \ref{rem:lin}  for more details. 
As an additional consequence of Proposition \ref{lem:ess}, a uniform  Lipschitz estimate  for the solution map to \eqref{eq:n20} is provided (Theorem \ref{thm:lip}). More importantly, due to Proposition \ref{lem:ess}, 
the  uniform Lipschitz continuity with respect to $\e$ of the viscous solution map can be concluded (Proposition \ref{lem:lip_c}). These findings will be valuable in a future work where  optimality conditions for the control of \eqref{eq:n20} will be established (see section \ref{sec:c} for more details). We mention here that the \ro{finite-dimensional} versions of the afore mentioned results have been recently employed in \cite{bp}. Therein, an optimality system for the control of the \ro{finite-dimensional} version of \eqref{eq:n20} was derived.




The paper is organized as follows. After introducing the notation, we state at the beginning of section \ref{2} the basic assumptions on the mappings appearing in \eqref{eq:n}. Then, we introduce the viscous \ro{system regularizing \eqref{eq:n}}, i.e., \eqref{eq:q10}, and recall its properties \cite{aos}. \ro{Here we also state some new equivalent formulations, including an energy identity (Lemma \ref{prop:ei}).} In section \ref{ub} we derive uniform bounds with respect to\ the viscosity parameter $\e$ for solutions of \eqref{eq:q10}. 
Since the state enters the first argument of the dissipation potential $\RR$ via an integral operator, we are able to prove that \ro{for all $y \in H^1(0,T;Y)$ with $\dot y (t) \in \dom \RR$  f.a.a.\, $t \in (0,T)$ it holds
\begin{equation}\label{kappa22}\begin{aligned}
\int_0^T  \Big| {\frac{d}{ds} \Big[\RR(\HH(y)(s),\dot y(t))\Big]\Big|_{s=t}}\Big|\,dt \leq c\,\|y\|^2_{C([0,T];Y)}+\frac{\alpha}{4} \|\dot y\|^2_{L^1(0,T;Y)} {+c_0},
\end{aligned}\end{equation}where  {$c,c_0>0$ depend} only on the given parameters and $\alpha>0$ is the degree of uniform convexity of $\EE$.}
Cf.\,\eqref{kappa} in Lemma \ref{bound_k}, see also Remark \ref{rem:bound}. This  is the key to showing  uniform estimates for viscous states in the desired function space  (Proposition \ref{lem:est}). In section \ref{sec:ex},
 the passage to the limit $\e \searrow 0$ in the \ro{``energy identity" associated with the }viscous differential inclusion \eqref{eq:q10} 
 allows us to prove existence of strong solutions to \eqref{eq:n} (Theorem \ref{ex}).
 \ro{An \ro{``energy identity" associated with the }rate-independent evolution is given in Proposition \ref{lem:ei}.}   Section  \ref{oc} deals with the optimization of our evolution \eqref{eq:n}, see \eqref{eq:min}. 
The estimate \eqref{kappa} is again employed, this time   to show the boundedness of the multivalued control-to-state operator (Lemma \ref{lem:est_ris}). The latter provides the starting point 
for the proof of  existence of global minimizers for  \eqref{eq:min} (Theorem \ref{ex_opt}). 
\\Section  \ref{u0} is dedicated to \ro{uniqueness results}. Here we introduce the precise assumptions on the \ro{system} \eqref{eq:n20} and show that the existence results from the previous sections apply to this setting. Subsection \ref{u} contains the main result concerning the unique solvability of \eqref{eq:n20}. 
Based on a very careful analysis, we prove a fundamental  estimate in Proposition \ref{lem:ess}. By making again use of the fact that the state enters the dissipation functional through an integral operator, we establish  this crucial result by imposing a  smoothness condition on $\kappa$ (Assumption \ref{assu:k} and  Remark \ref{rem:non}).
Proposition \ref{lem:ess} is the key to proving the Lipschitz stability of the (at first, multivalued) solution map of \eqref{eq:n20}.  As a first consequence, uniqueness follows (Theorem \ref{thm:lip}). 
A second noteworthy implication is the uniform Lipschitz continuity with respect to\ the viscosity parameter $\e$ of the respective viscous solution operator (Proposition \ref{lem:lip_c}).
The consequences of the uniqueness result and the impact on future work are discussed in section \ref{sec:c}.

\subsection*{Notation}
Throughout the paper, $T > 0$ is a fixed final time. Weak derivatives of vector-valued functions are denoted by a dot, whereas the  partial derivative with respect to the time variable of a real valued mapping $f$ is denoted by $\partial_t f$ or $\partial_s f$. If $X$ and $Y$ are Banach spaces, the notation $Y \embed \embed X$ means that $Y$ is \bl{compactly embedded} in $X$.
 For the dual pairing between $Y$ and its dual space $Y^*$
we write $\dual{.}{.}_Y$.   \bl{If $Y$ is a Hilbert space, we write $(\cdot,\cdot)_Y$ for the corresponding inner product. }
The  following abbreviations will be used throughout the paper:
\begin{equation*}
\begin{aligned}
H^{1}_0(0,T;Y)&:=\{y\in H^{1}(0,T;Y):y(0)=0\},
\\H^2(0,T;Y)&:=\{y\in H^{1}(0,T;Y):\dot y \in H^{1}(0,T;Y)\},
\end{aligned}
\end{equation*}
where $Y$ is a Banach space. 
For the polar cone of a set $ \MM \subset X$ we use the notation $\MM^\circ:=\{x^\ast \in X^\ast : \dual{x^\ast}{x}_X \leq 0 \quad \forall\, x \in \MM\}.$ 
By  $\II_{\MM}:X \to \{0,\infty\}$ we denote the indicator function \ro{associated with }the set $\MM$, i.e., $\II_\MM(y)=0$ if $y \in \MM$ and $\II_\MM(y)=\infty,$ otherwise. 
The symbol $\partial f$ stands for the convex subdifferential, see e.g.\ \cite{rockaf}.
For a mapping $\RR:X \times Y \to (-\infty,\infty]$ and a vector $x \in X$, 
the set $\partial_{2}\RR(x,y) \subset Y^\ast$ describes the convex subdifferential  at $y$ of the functional $\RR(x,\cdot):Y \to (-\infty,\infty]$. 
{For fixed $x \in X,$ the Fenchel conjugate  of the functional $\RR(x,\cdot):Y \to (-\infty,\infty]$ is given by 
$[\RR(x,\cdot)]^\star:Y^* \to [-\infty,\infty]$,  
\[[\RR(x,\cdot)]^\star(y^*):=\sup_{v \in Y} \, \dual{y^*}{v}_{Y}- \RR(x,v).\]}
\\Throughout the paper, $c,C>0$ are generic constants that depend only on the fixed physical parameters. 
To emphasize the dependence of a constant  on a certain  parameter $M$ we sometimes write $c(M).$
The Nemytskii operators associated with the mappings considered in this paper will be described by the same symbol, even when considered with different domains and ranges.
\section{Standing assumptions and the viscous problem}\label{2}

 {In this section we fix the general setting and  {comment on the required assumptions}}. \ro{Then we are concerned with some preparations for the upcoming investigations. In this context, we introduce the viscous version of \eqref{eq:n}, i.e., \eqref{eq:q1}. We recall and derive some equivalent formulations thereof  including an energy identity.} These  will be very useful later on, in the context of proving existence of solutions (sections \ref{ub} and \ref{sec:ex}).

Throughout the paper, $X$ is a Banach space, while $Y$ is a  Hilbert space.
We start the presentation of the precise framework with the assumption on the energy.
\begin{assumption}[Energy functional]\label{it:0} 
Given a  time-dependent load $\ell:[0,T] \to Y^\ast$, 
the energy $\EE:[0,T] \times Y \rightarrow \mathbb{R}$ is defined
as \ro{
\begin{equation}\label{def:e}\begin{aligned}
\EE(t, y)&:=\UU(y)-\dual{\ell(t)}{y}_{Y},
\end{aligned}\end{equation}where $\UU:Y \to \R$ is $\alpha-$uniformly convex, i.e., there exists $\alpha>0$ so that
 \begin{equation}\label{uconv}(1-\theta)\UU(y_1)+\theta \UU(y_2)\geq \UU(y_\theta)+ \frac{\alpha}{2} \theta (1-\theta)\|y_2-y_1\|_Y^2 \quad \forall\, y_1,y_2 \in Y, \forall\,\theta \in [0,1],\end{equation}where $y_\theta:=\theta y_1+(1-\theta)y_2.$ Moreover, $\UU \in C^2(Y).$}
\end{assumption}

  \ro{The uniform convexity of $\UU$ in \eqref{uconv} has two essential implications which will be used in this paper, namely:
 \begin{equation}\label{uconv1}
\UU(y_1)- \UU(y_2)\geq \UU'(y_2)(y_1-y_2)+ \frac{\alpha}{2} \|y_2-y_1\|_Y^2 \quad \forall\, y_1,y_2 \in Y,\end{equation}
\begin{equation}\label{uconv2}
\UU''(y(t))[\dot y(t)]^2 \geq  \alpha \|\dot y(t)\|_Y^2 \quad \forall\, y \in H^1(0,T;Y),\ \   \ae (0,T).\end{equation}
Note that \eqref{uconv1} follows from the definition of the derivative of $\UU$, while \eqref{uconv2} is due to a Taylor expansion of order two.}
\begin{remark}
 \ro{We remark that \eqref{uconv} is equivalent to the uniform convexity of $\EE(t,\cdot)$ for each $t\in[0,T]$. We underline that energies of the type \eqref{def:e} are often encountered in applications from continuum mechanics \cite[Sec.\,7]{mie_st_dep}. 
 From a mathematical point of view, they provide advantages, especially if one aims to optimize \eqref{eq:n} by using the applied force as the control: such energies   allow us to better follow the relation between the load and the state in terms of temporal regularity and boundedness of the control-to-state operator.
 \\
Nevertheless, the analysis in this paper up to section \ref{u0} remains unaffected if we dispense of the structure assumption \eqref{def:e} on $\EE$, i.e., if we work with a  general energy $\wE:[0,T] \times Y \rightarrow \mathbb{R}$, where the load does not explicitly appear. In addition to smoothness and the uniform convexity of $\wE(t,\cdot),$  this general energy needs to comply with other assumptions that ensure the existence of  solutions for \eqref{eq:q10}, cf.\,Remark \ref{rem:visc} below. Moreover, to be able to prove the estimate in Lemma \ref{prop:c_est}, $\wE$ has to satisfy the so-called ``uniform control of power"-condition \cite[Eq.\,(2.2)]{mie_ros}:
$\exists \laz,\lao>0$ so that
\begin{equation}\label{ucp}|\partial_t \wE(t,y)|\leq \lambda_0(\wE(t,y)+\lambda_1) \quad \forall\, t\in [0,T],\forall\,y \in Y. \end{equation}
We point out that this condition is in general not  fulfilled by our $\EE$, however we can prove Lemma \ref{prop:c_est}, thanks to the structure assumption \eqref{def:e}. We also note that to show Proposition \ref{lem:est}, one needs other additional requirements for $\wE$, such as the Lipschitz continuity of $\partial_y \wE(\cdot,y):[0,T]\to \R,$ for all $y\in Y,$ with Lipschitz constant independent of $y$.}
\end{remark}

For the integral operator appearing in the first argument of the dissipation functional we impose the following
\begin{assumption}[History operator]
\label{it:1}  The \textit{history operator} $\HH:L^1(0,T;Y) \to W^{1,1}(0,T;X)$ is given by
\begin{equation*}
 [0,T] \ni t \mapsto \HH(y)(t):=\int_0^t  {B(t-s,y(s))} \,ds +y_0 \in X,\end{equation*}
where $y_0\in X$ is fixed and  {$B:[0,T]\times Y\to X$ satisfies 
\begin{itemize}
\item for almost all $t\in (0,T),$ the mapping $B(t,\cdot):Y \to X$ is Lipschitz-continuous with Lipschitz-constant $b(t)$ satisfying $b \in L^1(0,T);$ in addition, $B(\cdot,0)\in L^1(0,T;X);$
\item for each $y \in Y$ the mapping $B(\cdot,y):[0,T] \to X$ is differentiable and $\partial_t B(\cdot,y):[0,T] \to X$ is measurable;
\item for almost all $t\in (0,T),$ the mapping $\partial_t B(t,\cdot):Y \to X$ is Lipschitz-continuous with Lipschitz-constant $\tilde b(t)$ satisfying $\tilde b \in L^1(0,T);$ in addition, $\partial_t B(\cdot,0)\in L^1(0,T;X)$.
\end{itemize}}
\end{assumption} 
We observe that, for each $y \in L^1(0,T;Y)$, the  derivative of $\HH(y)$ is given by {
\begin{equation}\label{d_h}
 [0,T] \ni t \mapsto\frac{d}{dt}  \HH(y)(t)=B(0,y(t))+\int_0^t  {\partial_t B(t-s,y(s))} \,ds .\end{equation}Moreover, we notice that
 \begin{equation}\label{d_hh}\begin{aligned}
\Big\|\frac{d}{dt}  \HH(y)(t)\Big\|_X &\leq \|B(0,y(t))\|_X+\int_0^t \|  {\partial_t B(t-s,y(s))}\|_X \,ds 
\\&\leq b(0)\|y(t)\|_Y+\|B(0,0)\|_X
\\&\quad+\int_0^t \tilde b(t-s)\|y(s)\|_Y+ \|\partial_t B(t-s,0)\|_X\,ds \quad \forall\,t\in[0,T].
\end{aligned}
\end{equation}
Thus, the  {history operator $\HH:L^1(0,T;Y) \to W^{1,1}(0,T;X)$}  is well defined, since 
\begin{align*}
\int_0^T \Big\|\frac{d}{dt}  \HH(y)(t)\Big\|_X \,dt&\leq b(0)\|y\|_{L^1(0,T;Y)}+T\,\|B(0,0)\|_X+
\\&\quad + \int_0^T \|y(t)\|_Y \int_t^T \tilde b(s-t) \,ds \,dt
\\&\quad + T  \|\partial_t B(\cdot,0)\|_{L^1(0,T;X)}
\\&\leq c(B,T)+ \|y\|_{ L^1(0,T;Y)}(b(0)+\|\tilde b \|_{L^1(0,T)}),
\end{align*}
as a consequence of Fubini's theorem and Assumption \ref{it:1}. Note that $c(B,T)>0$ is a constant that depends only on the given data. }

Finally, the dissipation potential is supposed to fulfill the following requirement.
\begin{assumption}[Dissipation functional]\label{it:st1} The dissipation functional $\RR:X \times Y  \to [0,\infty]$ has the following properties:
 \begin{enumerate}
   \item\label{it:st11} For each $\zeta \in X$, $\RR(\zeta, \cdot)$ is proper, convex, lower semicontinuous and \bl{positively} homogeneous, i.e., $\RR(\zeta,\gamma \eta)=\gamma \RR(\zeta,\eta)$ for all $\gamma \geq 0$ and all $\eta \in Y$.  The domain of $\RR(\zeta, \cdot)$ is a fixed subset of $Y$, independent of $\zeta.$ Moreover, \ro{$\RR(\zeta(\cdot), \eta(\cdot)):[0,T]\to \R$ is measurable if $\zeta:[0,T]\to X$ and $\eta:[0,T] \to \dom \RR$ are measurable.}
   \item \label{it:st12} There exists $L_\RR \geq 0$ such that 
   \begin{equation}\label{(*)}
\begin{aligned}
\RR(\zeta_1,\eta_2)-& \RR(\zeta_1,\eta_1)+\RR(\zeta_2,\eta_1)-\RR(\zeta_2,\eta_2)
\\&\qquad \qquad \qquad   \leq L_\RR\,\|\zeta_1-\zeta_2\|_X\|\eta_1-\eta_2\|_Y 
   \\& \qquad \qquad \quad \qquad  \qquad \forall\,\zeta_1,\zeta_2 \in X,\ \forall\,\eta_1,\eta_2  \in  \dom \RR,
   \end{aligned}
   \end{equation}
   where $\dom \RR$ denotes the fixed domain of $\RR(\zeta, \cdot), \ \zeta \in X$.
   \end{enumerate}\end{assumption}

\begin{remark}\label{rem:r}
We underline that Assumption \ref{it:st1} is satisfied by dissipation functionals appearing in applications. A first example is the dissipation potential $R$ from section \ref{u0}, see \eqref{def:r}. This functional appears in the modelling of damage models with fatigue \cite[Eq.\,(1.1)]{alessi} and it is straightforward to see that $R$ verifies  Assumption \ref{it:st1}.

Other possible choices include bounded  functionals
$\widehat R:X \times Y \to \R$ defined as 
 \begin{equation}\label{def:t}
\widehat R(\zeta,\eta):=(g(\zeta), | \eta|)_{X}, \end{equation}where $X$ is some Lebesgue space, while $Y$ is a subspace of $\hon$ such that $Y \embed X$, and $g:\R \to [0,\infty)$ is a Lipschitz continuous mapping. 
 
 If
\begin{equation}\label{def:y}
Y:=H^1_{\G_1}(\O)=\{y \in \hon:y=0 \text{ on }\Gamma_1\},\end{equation}where $\Gamma_1$ is a part of the boundary of a bounded Lipschitz domain $\O\subset \R^N$, $N \in \{2,3\}$, and \[X:=L^2(\Gamma_2),\] where $\Gamma_2$ is a part of the boundary of $\Omega$ such that $\G_1 \cap \G_2=\emptyset$, then the mapping $\widehat R$ defined in \eqref{def:t} is the functional appearing in the modelling of contact problems with (total) slip-dependent  friction law \cite[Eq.\,(10.65), Chp.\,10.4]{sofonea}, i.e., 
\[\widehat R(\zeta,\eta)=\int_{\G_2} g(\zeta) | \eta|\,ds.\]
 The fact that $\widehat R$ fits in the setting of Assumption \ref{it:st1} can be easily checked, see also the proof of \cite[Thm.\,10.20]{sofonea} for details.

 If $\Gamma_1$ is empty in \eqref{def:y} and  $X:=\lo,$ then
  \begin{equation}\label{def:t0}
\widehat R(\zeta,\eta)=\int_\O g(\zeta) | \eta|\,dx, \end{equation}
and  \eqref{eq:n} with $\RR:=\widehat R$ becomes the dual formulation of the \ro{history-dependent} sweeping process
 \begin{equation}\label{hds}
 \dot q(t) \in \partial \II_{K(t,q)}(-\partial_q \EE(t,q(t))) =\NN_{K(t,q)}(-\partial_q \EE(t,q(t)))\quad \ae (0,T), 
 \end{equation}
where, for a given pair $(t,q) \in [0,T] \times L^1(0,T;Y),$ one defines 
\begin{equation*}\begin{aligned}
K(t,q): =\{\phi \in L^2(\O):|\phi|\leq g (\HH(q)(t))\  \ae \O\}\end{aligned}\end{equation*} and $\NN_{K(t,q)}(-\partial_q \EE(t,q(t)))$ is the normal cone of the set $K(t,q)$ at $-\partial_q \EE(t,q(t)).$ For a detailed proof of \eqref{hds}, we refer to Appendix \ref{a} (Lemma \ref{conj0}).
We point out that the only noteworthy difference to quasivariational sweeping problems  (where $K(t,q)$ is replaced by $K(t,q(t))$) is the fact that, here, the set $K(t,q)$ depends on the history of the state $\HH(q)$, and not only on the value of the state at the present time point $t$.\end{remark}

 {\begin{remark}[Assumption \ref{it:st1}.\ref{it:st12}]\label{rem:lip}
We observe that \eqref{(*)} implies 
\begin{equation}\label{modul}
|\RR(\zeta_1,\eta)-\RR(\zeta_2,\eta)|  \leq L_\RR\,\|\zeta_1-\zeta_2\|_X\|\eta\|_Y 
\  \forall\,\zeta_1,\zeta_2 \in X,\ \forall\,\eta \in  \dom \RR,\end{equation}
which corresponds to the Lipschitz condition imposed on the finite-valued dissipation functional from  \cite{mie_ros}, cf.\,\cite[Eq.\,(3.3)]{mie_ros}.  Indeed, if we set $(\eta_1,\eta_2):=(\eta,0)$ and then $(\eta_1,\eta_2):=(0,\eta)$ in \eqref{(*)} and use the fact that $\RR(\cdot,0)=0,$ by the positive homogeneity property of $\RR$, we obtain $\eqref{(*)} \Longrightarrow \eqref{modul}$.
If $\dom \RR$ is a linear subspace of $Y,$  the opposite implication is true as well. In order to see this, we notice that
\begin{equation}\label{subl}
\RR(\zeta,\eta_1)+\RR(\zeta,\eta_2)  \leq \RR(\zeta,\eta_1+\eta_2) 
  \quad   \forall\,\zeta \in X,\ \forall\,\eta_1,\eta_2  \in Y,
   \end{equation}
which is a consequence of the convexity and $1-$homogeneity of $\RR(\zeta,\cdot),$ cf.\,Assumption \ref{it:st1}.\ref{it:st11}. Then, we have
\begin{equation*}
   \begin{aligned}
&\RR(\zeta_1,\eta_2)- \RR(\zeta_1,\eta_1)+\RR(\zeta_2,\eta_1)-\RR(\zeta_2,\eta_2)  \\&\overset{\eqref{subl}}{\leq} 
\RR(\zeta_1,\eta_2-\eta_1)+\RR(\zeta_2,\eta_1)-\RR(\zeta_2,\eta_2)
\\&\overset{\eqref{modul}}{\leq} 
L_\RR\,\|\zeta_1-\zeta_2\|_X\|\eta_1-\eta_2\|_Y +\RR(\zeta_2,\eta_2-\eta_1)+\RR(\zeta_2,\eta_1)-\RR(\zeta_2,\eta_2)
\\&\overset{\eqref{subl}}{\leq}  L_\RR\,\|\zeta_1-\zeta_2\|_X\|\eta_1-\eta_2\|_Y +\RR(\zeta_2,\eta_2)-\RR(\zeta_2,\eta_2)
\\&\quad  =L_\RR\,\|\zeta_1-\zeta_2\|_X\|\eta_1-\eta_2\|_Y
\quad \forall\,\zeta_1,\zeta_2 \in X,\ \forall\,\eta_1,\eta_2  \in  \dom \RR,
   \end{aligned}\end{equation*}
   We point out that  the second estimate in the above series of inequalities fails,  if $\eta_1,\eta_2 \in \dom \RR$ does not imply $\eta_1-\eta_2 \in \dom \RR$. To summarize, we have $\eqref{(*)} \Longleftrightarrow \eqref{modul}$, provided that $\dom \RR$ is  a linear subspace of $Y$, which shows that our Assumption \ref{it:st1}.\ref{it:st12} does not exhibit any additional restriction when compared to \cite[Eq.\,(3.3)]{mie_ros} which deals with finite-valued dissipation functionals ($\dom \RR=Y$). If  we would replace $\dom \RR$ by $Y$ in \eqref{modul} and in \eqref{(*)}, then $\eqref{(*)} \Longleftrightarrow \eqref{modul}$ would be true, but  the respective estimates then become unverifiable as terms of the type $\infty-\infty$  appear on their left-hand sides.
   \\The same statements are valid when it comes to the formulation
\begin{equation}\label{modul2}\begin{aligned}
|\RR(\zeta_1,\eta_1-\eta_2)-\RR(\zeta_2,\eta_1-\eta_2)|  &\leq L_\RR\,\|\zeta_1-\zeta_2\|_X\|\eta_1-\eta_2\|_Y 
\\&\qquad \qquad 
\  \forall\,\zeta_1,\zeta_2 \in X,\ \forall\,\eta_1,\eta_2 \in  \dom \RR.\end{aligned}\end{equation}Clearly, if $\dom \RR$ is  a linear subspace of $Y,$ then $\eqref{modul} \Leftrightarrow \eqref{modul2} \Leftrightarrow \eqref{(*)}$. Otherwise, this equivalence is not necessarily true.  By using the arguments from above, we see that \eqref{modul2} implies \eqref{(*)}. This means that we could replace the latter by (the perhaps more familiar) estimate \eqref{modul2} and the results would remain unaffected. However, for  infinite-valued dissipation functionals where the domain is not a linear subspace of $Y$, \eqref{modul2} is unverifiable as well.
 This can be seen for instance by looking at  unidirectional cases that are typical for damage models, such as  the dissipation functional from \eqref{def:r} in section \ref{u0}:  $\eta_1,\eta_2 \in H^1(\O), \eta_1,\eta_2\geq 0$ does not imply $\eta_1-\eta_2 \geq 0,$ i.e., $\eta_1-\eta_2 \in \dom R.$ Then, $\infty$ may appear on the left hand side in \eqref{modul2}, which does not happen when we work with \eqref{(*)}.
    \\On another note,  it is imperative to ask that Assumption \ref{it:st1}.\ref{it:st12} is true in order to be able to make use of the  crucial results  in Lemmas \ref{lip_F} and \ref{bound_k} below.
  \end{remark}}

\begin{remark}\label{rem:r0}
We underline that, up till now, only viscous versions of  \ro{history-dependent} sweeping processes were addressed in the literature  regarding existence of solutions \cite{zv, mig_sof} and they all deal with quadratic energies. 
If we take a closer look at \cite[Eq.\,(1.2)]{mig_sof}, we see that the history operator enters  the argument of the respective normal cone $\NN_{\mathfrak{C}(t)}$ in a linear manner, and it is not included in the description of the moving set $\mathfrak{C}(t)$, \ro{as it is the case in \eqref{hds}}. \ro{Thus, \ro{even though} it is termed ``history-dependent", the (non-viscous version) of the sweeping process addressed in \cite{mig_sof} and \eqref{hds} give rise to different mathematical challenges. In fact, the non-viscous counterpart of  \cite[Eq.\,(1.2)]{mig_sof} is  connected to (the dual formulation of) \eqref{eq:n20}.}
Indeed, if  $\mathfrak{C}(t)$  is replaced by the polar cone of the set $\CC$ from \eqref{def:c}, then the non-viscous counterpart of  \cite[Eq.\,(1.2)]{mig_sof} corresponds to
\begin{equation}\label{eq:hds0}  
\dot q(t) \in  \partial \II_{\CC^\circ}(-\partial_q \EE(t, q(t))-\F(q)(t)) \quad  \ae (0,T),\end{equation}that is, the dual formulation of  \eqref{eq:n20}. Cf.\,Lemma \ref{conj} in Appendix \ref{a}. Note that the non-linearity $\kappa$ acts on the history operator in \eqref{eq:hds0}, which is not the case in \cite[Eq.\,(1.2)]{mig_sof}. The same observations apply if we compare \eqref{eq:n20} with \cite[Eq.\,(5)]{zv}.\end{remark}

In all what follows Assumptions \ref{it:0}, \ref{it:1} and \ref{it:st1} are tacitly assumed without mentioning them every time.


%
%

\subsection*{The viscous problem}\label{sec:0}
 {Having fixed the general set-up, we can  begin our  mathematical investigation of the model \eqref{eq:n1}. We start by  presenting the viscous version thereof, which} has been recently examined in {\cite[Sec.\,3]{aos}} \ro{in the context where $\UU'$ is globally Lipschitzian}. As explained in the introduction, the viscous model will serve as the approximation of the original \ro{rate-independent} problem \eqref{eq:n} in section  \ref{sec:ex}. 
Equipped with \ro{an initial datum $q_0 \in Y$}, it reads as follows:
\begin{equation}\label{eq:q1}
-\partial_q \EE(t,q(t)) \in \partial_2 \RR_\epsilon (\HH(q)(t),\dot{q}(t))\quad \text{in }Y^\ast  , \quad q(0) =\ro{q_0},
  \end{equation}a.e.\ in $(0,T)$.
  The viscous dissipation potential   $\RR_\epsilon:X \times  Y \rightarrow [0,\infty]$ is defined as \begin{equation}\label{def:r1}
\RR_\epsilon(\zeta,\eta):=\RR(\zeta,\eta)+\frac{\epsilon}{2}\|\eta\|^2_{Y},
\end{equation}
where  $\epsilon>0$ is the viscosity parameter and $\RR$ is the 1-homogeneous  dissipation functional, cf.\,Assumption \ref{it:st1}.

%

Let us now enumerate some useful properties of the viscous evolution \eqref{eq:q1}.  {While a  unique solvability result for \eqref{eq:q1} can be found in \cite{sofonea} in the case where $\RR$ is finite-valued (not necessarily 1-homogeneous) and $\EE$ is quadratic, \cite{aos} focused on reformulating \eqref{eq:q1} as an ODE in Hilbert space in  a more general framework. This allows for dissipation functionals with unbounded subdifferential and  the derivative of $\EE$ is a globally Lipschitz function. 
Since we want to refrain from imposing additional assumptions on $\UU$ for now, we just assume for the moment that the viscous model \eqref{eq:q1} is (not necessarily uniquely) solvable. Note that the existence of (at least one) viscous solution (with suitable temporal regularity) will be enough for the upcoming analysis and that it depends on the setting under which conditions this is guaranteed. In section \ref{u0} we specify the precise conditions on $\UU$ that yield the (in that case, unique) solvability of \eqref{eq:q11}, see Assumption \ref{assu:exd} and  Remark \ref{rem:visc} below.
}
\ro{\begin{assumption}\label{assu:ex}
Let $\e>0$ be small enough. For every $\ell \in \llun$, the evolution \eqref{eq:q1} \ro{with the initial datum $q_0 \in Y$} admits at least one solution $q \in   \hy $.
\end{assumption}
\begin{remark}[Existence of viscous solutions]\label{rem:visc}
The existence of solutions for history-dependent viscous problems was  tackled in various contributions \cite{zv, mig_sof, sofonea, aos, jnsao} under the assumption that the non-linearity corresponding to our $\UU'$ is globally Lipschitzian. If we require this, we can also  deduce the unique solvability of \eqref{eq:q1}  \cite[Thm.\,8]{aos}. However, as shown at the beginning of the proof of Theorem \ref{ex1} below, the application of a fixed point argument to solve \eqref{eq:q1} is possible under weaker requirements on $\UU'$. It suffices that $\UU'$ satisfies a local Lipschitz condition and a global growth condition as in Assumption \ref{assu:exd} (where $\hon$ is replaced by $Y$). The latter ensures that the existence result holds true on the entire given time interval $[0,T].$ If we would refrain from the viscous approach, then  Assumption \ref{assu:exd}  would no longer be necessary and 
 the existence of solutions for the RIS \eqref{eq:n} could be shown by a time-discretization technique. This would involve a time-discrete energy and   the passage to the limit therein would require Assumption \ref{assu:r} below to be true \cite[Sec.\,4.1]{mie_ros}. However, this goes beyond the scope of this paper. As already pointed out in the introduction, the starting point for our investigations is the viscous model \eqref{eq:q1}, in view of a future work that involves satisfying optimality conditions associated to the control of \eqref{eq:n}, see \cite{aos} and Remark \ref{rem:fw}.\end{remark}}
\begin{lemma}[The viscous problem as a non-smooth ODE, {\cite[Lemma 2]{aos}}]\label{lem:proj}
Let $\ell \in \llun$. Then, $q\in \hy$ satisfies the evolution in \eqref{eq:q1} if and only if 
\begin{equation}\label{eq:syst_diff1}
 \dot q(t)   =\frac{1}{\e} \V_Y^{-1}(\mathbb{I}-P_{\partial_2 \RR(\HH(q)(t),0)})(-\partial_q \EE(t,q(t)) ) \quad \text{in }Y, \quad \ae (0,T),
 \end{equation}where $\V_Y:Y \to Y^\ast$ is the Riesz isomorphism \ro{associated with }the Hilbert space $Y$. For each $\zeta \in X$, the operator $P_{\partial_2 \RR(\zeta, 0)}:Y^\ast \to Y^\ast $ is the \bl{(metric)} projection onto the set \[\partial_2 \RR(\zeta, 0)=\{\varphi \in Y^\ast: \dual{\varphi}{v}_Y \leq \RR(\zeta,v) \quad \forall\,v \in Y\}\] with respect to\ the \bl{inner product $\dual{\V_Y^{-1}\cdot }{\cdot }_{Y^\ast}$}, i.e.,  $P_{\partial_2 \RR(\zeta, 0)}\o$ is the unique solution of \begin{equation*} \min_{\mu \in \partial_2 \RR(\zeta, 0)} \frac{1}{2}\|\o-\mu\|_{\V_Y^{-1}}^2
\end{equation*} 
for any $\o \in Y^\ast$.
\end{lemma}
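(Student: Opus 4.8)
\textbf{Proof plan for Lemma \ref{lem:proj}.}

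The statement to be proved is the characterization of the unique solution of the viscous inclusion \eqref{eq:q1} as the solution of the non-smooth ODE \eqref{eq:syst_diff1}, together with the identification of the right-hand side via the metric projection onto $\partial_2 \RR(\zeta,0)$. Since this is quoted from \cite[Theorem 8]{aos}, the plan is to reconstruct the argument rather than invent a new one. The starting point is the pointwise (in $t$) reformulation of the subdifferential inclusion. Fixing $t$ and writing $\zeta := \HH(q)(t) \in X$, $w := -\partial_q \EE(t,q(t)) \in Y^\ast$, the inclusion \eqref{eq:q1} reads $w \in \partial_2 \RR_\e(\zeta,\dot q(t))$. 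Using \eqref{def:r1} and the sum rule for subdifferentials (the quadratic term $\tfrac{\e}{2}\|\cdot\|_Y^2$ is continuous and convex, so the sum rule applies without qualification issues), this is equivalent to $w - \e \V_Y \dot q(t) \in \partial_2 \RR(\zeta,\dot q(t))$.

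The next step is to exploit the positive homogeneity of $\RR(\zeta,\cdot)$ of degree $1$ (Assumption \ref{assu}.\ref{it:st11}). For a convex, positively homogeneous, lower semicontinuous functional $\varphi$ with $\varphi(0)=0$, one has the well-known equivalence: $\mu \in \partial \varphi(\eta)$ if and only if $\mu \in \partial \varphi(0)$ and $\dual{\mu}{\eta} = \varphi(\eta)$. Applying this with $\varphi = \RR(\zeta,\cdot)$, $\eta = \dot q(t)$, $\mu = w - \e \V_Y \dot q(t)$, the inclusion becomes the system: $w - \e \V_Y \dot q(t) \in \partial_2 \RR(\zeta,0)$ and $\dual{w - \e \V_Y \dot q(t)}{\dot q(t)}_Y = \RR(\zeta,\dot q(t))$. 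Now I would recognize this pair of conditions as precisely the variational characterization of the metric projection of $w$ onto the closed convex set $\e \V_Y \dot q(t) + \partial_2 \RR(\zeta,0)$ — or, after rearranging, I claim $\e \V_Y \dot q(t) = w - P_{\partial_2\RR(\zeta,0)} w$ where the projection is taken with respect to the inner product $\dual{\V_Y^{-1}\cdot}{\cdot}_{Y^\ast}$ on $Y^\ast$. Concretely: set $\mu^\star := w - \e \V_Y \dot q(t)$; then $\mu^\star \in \partial_2\RR(\zeta,0)$, and for arbitrary $\nu \in \partial_2\RR(\zeta,0)$ one has $\dual{\V_Y^{-1}(w - \mu^\star)}{\mu^\star - \nu}_{Y^\ast} = \e\,\dual{\dot q(t)}{\mu^\star-\nu}_Y = \e(\RR(\zeta,\dot q(t)) - \dual{\nu}{\dot q(t)}_Y) \ge 0$, using $\dual{\mu^\star}{\dot q(t)}_Y = \RR(\zeta,\dot q(t))$ from the second condition and $\dual{\nu}{\dot q(t)}_Y \le \RR(\zeta,\dot q(t))$ from $\nu \in \partial_2\RR(\zeta,0)$. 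This is exactly the obtuse-angle (variational) inequality for the metric projection, so by uniqueness of the projection $\mu^\star = P_{\partial_2\RR(\zeta,0)}w$, and hence $\e \V_Y \dot q(t) = (\mathbb{I} - P_{\partial_2\RR(\zeta,0)})w$, which rearranges to \eqref{eq:syst_diff1}. The converse implication (that a solution of \eqref{eq:syst_diff1} solves \eqref{eq:q1}) follows by reversing these equivalences; all steps are genuine "if and only if" statements.

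It remains to address existence and uniqueness of a solution $q \in H^1_0(0,T;Y)$ to \eqref{eq:syst_diff1}. The plan is to view \eqref{eq:syst_diff1} as an ODE of the form $\dot q(t) = F(t, q)$ where $F(t,q) = \tfrac{1}{\e}\V_Y^{-1}(\mathbb{I} - P_{\partial_2\RR(\HH(q)(t),0)})(\ell(t) - A q(t))$, and to apply a Picard--Lindelöf/Banach fixed point argument on $C([0,T];Y)$ (or $L^2(0,T;Y)$). The two ingredients needed are: (i) the metric projection $\o \mapsto P_{\partial_2\RR(\zeta,0)}\o$ is nonexpansive in $\o$ (standard for projections onto closed convex sets in a Hilbert space), and (ii) the set-valued map $\zeta \mapsto \partial_2\RR(\zeta,0)$ is Lipschitz in the Hausdorff sense so that $\zeta \mapsto P_{\partial_2\RR(\zeta,0)}\o$ is Lipschitz uniformly in bounded $\o$; this is exactly where Assumption \ref{assu}.\ref{it:st12} (the mixed Lipschitz/cyclic-monotonicity-type condition on $\RR$) enters, since it controls precisely the variation of the subdifferential sets $\partial_2\RR(\zeta,0)$ with $\zeta$. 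Combining (i), (ii), the Lipschitz continuity of $\HH: L^1(0,T;Y) \to W^{1,1}(0,T;X)$ (hence into $C([0,T];X)$) established right after Assumption \ref{assu}, and the boundedness of $A$ and $\ell \in L^2(0,T;Y^\ast)$, one obtains that $F$ satisfies a Lipschitz estimate of Volterra type, and a standard fixed-point iteration (exploiting the causal/Volterra structure of $\HH$ to get contraction on short time intervals, then concatenating) yields a unique global solution in $H^1_0(0,T;Y)$, with the $H^1$-regularity coming from $\dot q = F(\cdot,q) \in L^2(0,T;Y)$ once $q$ is known. The main obstacle — and the step I would flag as the heart of the matter — is verifying the Lipschitz dependence of the projection operator on the parameter $\zeta$, i.e., deriving from Assumption \ref{assu}.\ref{it:st12} the estimate $\|P_{\partial_2\RR(\zeta_1,0)}\o - P_{\partial_2\RR(\zeta_2,0)}\o\|_{\V_Y^{-1}} \le C\,\|\zeta_1 - \zeta_2\|_X$; everything else is routine. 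Since the full details are carried out in \cite[Theorem 8]{aos}, I would simply cite that reference for this part and present the ODE reformulation above as the substance of the lemma.
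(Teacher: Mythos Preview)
The paper does not give its own proof of this lemma; it is stated with a direct citation to \cite[Theorem~8]{aos} and used as a black box. Your reconstruction is correct and follows the standard route: the sum rule reduces \eqref{eq:q1} to $w-\e\V_Y\dot q\in\partial_2\RR(\zeta,\dot q)$, positive homogeneity turns this into the pair of conditions $w-\e\V_Y\dot q\in\partial_2\RR(\zeta,0)$ and $\dual{w-\e\V_Y\dot q}{\dot q}_Y=\RR(\zeta,\dot q)$, and the obtuse-angle computation identifies $w-\e\V_Y\dot q$ as the metric projection of $w$ onto $\partial_2\RR(\zeta,0)$ in the $\V_Y^{-1}$-inner product, yielding \eqref{eq:syst_diff1}. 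Your plan for existence and uniqueness via a Volterra-type fixed point, with the Lipschitz dependence of the projection on $\zeta$ (this is precisely the content of Lemma~\ref{lip_F}, also cited from \cite{aos}) as the key ingredient supplied by Assumption~\ref{assu}.\ref{it:st12}, is exactly the expected argument. There is nothing to correct; your sketch is in fact more detailed than what the paper itself provides.
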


 {In \cite{aos}, an essential Lipschitz continuity property was established. This is based on Assumption \ref{it:st1}.\ref{it:st12} and it will be useful in the proof of Proposition \ref{lem:est} below. \ro{The next result is also a key tool when it comes to solving the ODE \eqref{eq:syst_diff1}, see the proof of Theorem \ref{ex1} below for details.}}

\begin{lemma}[Lipschitz continuity of the projection, {\cite[Lemma 3]{aos}}]\label{lip_F}
The function \[X \times Y^\ast \ni (\zeta,\omega) \mapsto \frac{1}{\e} \V_Y^{-1}(\mathbb{I}-P_{\partial_2 \RR(\zeta,0)})\o\in Y\] is globally Lipschitz continuous.\end{lemma}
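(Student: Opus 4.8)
The plan is to rewrite the map under consideration as a proximal operator, in which the two arguments decouple in a tractable way. For each fixed $\zeta \in X$, the functional $\RR(\zeta,\cdot)$ is proper, convex, lower semicontinuous and positively homogeneous by Assumption~\ref{assu}.\ref{it:st1}, hence it coincides with the support function of its subdifferential at the origin; in particular $\RR(\zeta,0)=0$, so $0 \in \partial_2\RR(\zeta,0)$ and this set is nonempty, closed and convex, and the metric projection $P_{\partial_2\RR(\zeta,0)}$ of Lemma~\ref{lem:proj} is well defined. Applying Moreau's decomposition in the Hilbert space $\big(Y^\ast, \dual{\V_Y^{-1}\cdot}{\cdot}_{Y^\ast}\big)$ to $\II_{\partial_2\RR(\zeta,0)}$, whose convex conjugate -- read through the Riesz isomorphism $\V_Y$ -- is $\RR(\zeta,\cdot)$, or equivalently by a direct manipulation of the variational inequality characterising the projection, I would first establish the identity
\[
\tfrac1\e\,\V_Y^{-1}\big(\mathbb{I} - P_{\partial_2\RR(\zeta,0)}\big)\o = \tfrac1\e\,\operatorname{prox}_{\RR(\zeta,\cdot)}\!\big(\V_Y^{-1}\o\big) \qquad \forall\,(\zeta,\o) \in X \times Y^\ast,
\]
where the proximal operator is taken with respect to the inner product of $Y$. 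Since $\V_Y^{-1}$ is a linear isometric isomorphism from $\big(Y^\ast, \dual{\V_Y^{-1}\cdot}{\cdot}_{Y^\ast}\big)$ onto $Y$, it then suffices to show that $(\zeta, u) \mapsto \operatorname{prox}_{\RR(\zeta,\cdot)}(u)$ is Lipschitz continuous from $X \times Y$ into $Y$.

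The dependence on $u$ for fixed $\zeta$ is immediate, since the proximal operator of a convex, lower semicontinuous, proper functional is firmly nonexpansive: $\|\operatorname{prox}_{\RR(\zeta,\cdot)}(u_1) - \operatorname{prox}_{\RR(\zeta,\cdot)}(u_2)\|_Y \le \|u_1 - u_2\|_Y$ for all $u_1,u_2 \in Y$. The crux is the dependence on $\zeta$ for fixed $u$, where the Lipschitz-type estimate of Assumption~\ref{assu}.\ref{it:st1} is used in an essential way. Setting $w_i := \operatorname{prox}_{\RR(\zeta_i,\cdot)}(u)$, $i=1,2$, the optimality conditions give $\V_Y(u - w_i) \in \partial_2\RR(\zeta_i, w_i)$. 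Testing the subgradient inequality for $w_1$ with $w_2$ and the one for $w_2$ with $w_1$ and adding them, the mixed term collapses to $\|w_1 - w_2\|_Y^2$, leaving
\[
\|w_1 - w_2\|_Y^2 \le \RR(\zeta_1, w_2) - \RR(\zeta_1, w_1) + \RR(\zeta_2, w_1) - \RR(\zeta_2, w_2).
\]
As $w_1, w_2 \in \dom\RR$ (proximal points lie in the effective domain), Assumption~\ref{assu}.\ref{it:st1} bounds the right-hand side by $L_\RR\,\|\zeta_1 - \zeta_2\|_X\,\|w_1 - w_2\|_Y$, hence $\|w_1 - w_2\|_Y \le L_\RR\,\|\zeta_1 - \zeta_2\|_X$, uniformly in $u$. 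Combining the two estimates by the triangle inequality yields the assertion, with Lipschitz constant bounded by $\tfrac1\e(1 + L_\RR)$.

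I expect the reformulation in the first step to be the only genuine obstacle: one must recognise that the object to control is $\mathbb{I} - P_{\partial_2\RR(\zeta,0)}$ -- i.e.\ the proximal map of $\RR(\zeta,\cdot)$ -- rather than $P_{\partial_2\RR(\zeta,0)}$ itself. Working directly with $P_{\partial_2\RR(\zeta,0)}$ and only the Hausdorff-type information on the sets $\partial_2\RR(\zeta,0)$ extractable from Assumption~\ref{assu}.\ref{it:st1} would give merely a H\"older-$\tfrac12$ dependence on $\zeta$; it is the appearance of $\|\eta_1 - \eta_2\|_Y$ to the \emph{first} power on the right-hand side of that assumption, set against the quadratic term $\|w_1 - w_2\|_Y^2$ coming from the two subgradient inequalities, that promotes this to Lipschitz continuity.
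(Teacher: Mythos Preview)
Your argument is correct. The paper does not supply its own proof of this lemma; it is quoted verbatim from \cite[Lemma~3]{aos}, so there is no in-paper proof to compare against. Your route via Moreau's decomposition is the natural one: the identity
\[
\V_Y^{-1}\big(\mathbb{I} - P_{\partial_2\RR(\zeta,0)}\big)\omega = \operatorname{prox}_{\RR(\zeta,\cdot)}\!\big(\V_Y^{-1}\omega\big)
\]
holds because $\RR(\zeta,\cdot)$ is positively homogeneous, so its Fenchel conjugate (after identifying $Y$ with $Y^\ast$ via $\V_Y$) is the indicator of $\V_Y^{-1}\partial_2\RR(\zeta,0)$, and the Moreau decomposition gives $u = \operatorname{prox}_{\RR(\zeta,\cdot)}(u) + P_{\V_Y^{-1}\partial_2\RR(\zeta,0)}(u)$. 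The remainder of your argument---nonexpansiveness in $u$, and the subgradient-testing step in $\zeta$ that produces $\|w_1-w_2\|_Y^2$ on the left and invokes Assumption~\ref{assu}.\ref{it:st12} on the right---is clean and uses exactly the hypotheses available; the fixed domain $\dom\RR$ independent of $\zeta$ (Assumption~\ref{assu}.\ref{it:st11}) is what justifies applying the four-term Lipschitz bound to $w_1,w_2$. Your closing remark about why one must pass to $\mathbb{I}-P$ rather than $P$ itself is also to the point.
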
 

 {In section \ref{ub}, we intend to show uniform bounds w.r.t.\,$\e$ for viscous solutions associated to \eqref{eq:q1}, see Proposition \ref{lem:est} below. For this, we need a suitable reformulation of \eqref{eq:q1}, which is given by the following}

\begin{lemma}[Another equivalent formulation]\label{prop:equiv}
Let $\ell \in \llun$. Then, each solution \ro{$q \in   \hy $} \ro{associated with }\eqref{eq:q1} satisfies 
 \begin{equation}\label{eq:ic}  
-\partial_q \EE(t,q(t))-\e\V_Y \dot q(t) \in \partial_2 \RR(\HH(q)(t),\dot q(t)) \quad  \text{ in }Y^\ast,\  \ae (0,T) .   \end{equation}In particular,  \begin{subequations}  \begin{gather}
\dual{  -\partial_q \EE(t,q(t))-\e\V_Y \dot q(t) }{\dot q(t)}_{Y}=\RR(\HH(q)(t),\dot q(t)),\label{eq:p1}
\\  \dual{  -\partial_q \EE(t,q(t))-\e \V_Y \dot q(t) }{v}_{Y}\leq \RR(\HH(q)(t),v), \quad \forall\, v \in Y,  \quad   \ae (0,T), \label{eq:polar}\end{gather}\end{subequations}where $\V_Y:Y \to Y^\ast$ is the Riesz isomorphism \ro{associated with }the Hilbert space $Y$.
\end{lemma}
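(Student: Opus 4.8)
\textbf{Proof plan for Lemma \ref{prop:equiv}.}

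The plan is to unpack the characterization from Lemma \ref{lem:proj} and translate the projection identity into the subdifferential inclusion \eqref{eq:ic}, then read off \eqref{eq:p1} and \eqref{eq:polar} as the two halves of that inclusion. First I would fix $t$ outside a null set and abbreviate $\omega := -\partial_q\EE(t,q(t)) \in Y^\ast$ and $\zeta := \HH(q)(t) \in X$, so that \eqref{eq:syst_diff1} reads $\e\,\V_Y \dot q(t) = (\mathbb{I} - P_{\partial_2\RR(\zeta,0)})\omega = \omega - P_{\partial_2\RR(\zeta,0)}\omega$ in $Y^\ast$. Equivalently, $P_{\partial_2\RR(\zeta,0)}\omega = \omega - \e\,\V_Y\dot q(t)$. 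Set $\mu := \omega - \e\,\V_Y\dot q(t) \in Y^\ast$; the goal is to show $\omega - \e\,\V_Y\dot q(t) = \mu$ lies in $\partial_2\RR(\zeta,\dot q(t))$, i.e.\ that for all $v \in Y$,
\begin{equation*}
\dual{\mu}{v - \dot q(t)}_Y \leq \RR(\zeta,v) - \RR(\zeta,\dot q(t)).
\end{equation*}

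The key step is exploiting the variational inequality characterizing the metric projection $P_{\partial_2\RR(\zeta,0)}$ in the inner product $\dual{\V_Y^{-1}\cdot}{\cdot}_{Y^\ast}$: since $\mu$ is the projection of $\omega$ onto the closed convex set $\partial_2\RR(\zeta,0)$, we have $\mu \in \partial_2\RR(\zeta,0)$ and
\begin{equation*}
\dual{\V_Y^{-1}(\omega - \mu)}{\nu - \mu}_{Y^\ast} \leq 0 \qquad \forall\, \nu \in \partial_2\RR(\zeta,0).
\end{equation*}
Because $\omega - \mu = \e\,\V_Y\dot q(t)$, the left-hand side equals $\e\,\dual{\V_Y^{-1}\V_Y\dot q(t)}{\nu - \mu}_{Y^\ast} = \e\,\dual{\nu - \mu}{\dot q(t)}_Y$, so the projection inequality becomes $\dual{\nu}{\dot q(t)}_Y \leq \dual{\mu}{\dot q(t)}_Y$ for all $\nu \in \partial_2\RR(\zeta,0)$. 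Since $\partial_2\RR(\zeta,0)$ is exactly $\{\varphi \in Y^\ast : \dual{\varphi}{v}_Y \leq \RR(\zeta,v)\ \forall v \in Y\}$ and $\RR(\zeta,\cdot)$ is positively homogeneous of degree $1$, taking the supremum over $\nu \in \partial_2\RR(\zeta,0)$ and using $\RR(\zeta,\dot q(t)) = \sup_{\nu \in \partial_2\RR(\zeta,0)} \dual{\nu}{\dot q(t)}_Y$ (the standard duality for sublinear functionals, valid since $\RR(\zeta,\cdot)$ is proper, convex, lsc and positively homogeneous, hence the support function of $\partial_2\RR(\zeta,0)$) yields $\RR(\zeta,\dot q(t)) \leq \dual{\mu}{\dot q(t)}_Y$. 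On the other hand $\mu \in \partial_2\RR(\zeta,0)$ gives directly $\dual{\mu}{v}_Y \leq \RR(\zeta,v)$ for all $v \in Y$; in particular $\dual{\mu}{\dot q(t)}_Y \leq \RR(\zeta,\dot q(t))$, so in fact $\dual{\mu}{\dot q(t)}_Y = \RR(\zeta,\dot q(t))$, which is precisely \eqref{eq:p1}, and $\dual{\mu}{v}_Y \leq \RR(\zeta,v)$ for all $v$, which is \eqref{eq:polar}. Subtracting the equality from the inequality gives $\dual{\mu}{v - \dot q(t)}_Y \leq \RR(\zeta,v) - \RR(\zeta,\dot q(t))$ for all $v$, i.e.\ $\mu \in \partial_2\RR(\zeta,\dot q(t))$, which upon substituting $\mu = -\partial_q\EE(t,q(t)) - \e\,\V_Y\dot q(t)$ is exactly \eqref{eq:ic}.

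The main obstacle, and the only place requiring care, is justifying the identity $\RR(\zeta,\eta) = \sup\{\dual{\nu}{\eta}_Y : \nu \in \partial_2\RR(\zeta,0)\}$ — that is, that $\RR(\zeta,\cdot)$ coincides with the support function of its subdifferential at $0$. This follows from the Fenchel–Moreau biconjugation theorem applied to the proper, convex, lsc, positively homogeneous function $\RR(\zeta,\cdot)$: its convex conjugate is the indicator of the closed convex set $\partial_2\RR(\zeta,0)$, hence its biconjugate — which equals $\RR(\zeta,\cdot)$ itself — is the support function of that set. One should also note that $\dot q(t) \in \dom\RR(\zeta,\cdot)$ for a.e.\ $t$, which is guaranteed since the right-hand side of \eqref{eq:syst_diff1} together with $\e\,\V_Y\dot q(t) = \omega - \mu$ and $\mu \in \partial_2\RR(\zeta,0) \subset \dom(\RR(\zeta,\cdot))^\ast$ forces $\RR(\zeta,\dot q(t)) < \infty$ via the finiteness of $\dual{\mu}{\dot q(t)}_Y$ and the sup characterization; alternatively it follows already from $q \in \hhyy$ together with the structure of $\partial_2\RR$. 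Everything else is a direct rewriting using the Riesz isomorphism $\V_Y$ and the definition of the weighted projection, so no further estimates are needed.
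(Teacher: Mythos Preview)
Your proof is correct but takes a longer route than the paper. The paper's argument is essentially two lines: since $\RR_\e(\zeta,\cdot) = \RR(\zeta,\cdot) + \tfrac{\e}{2}\|\cdot\|_Y^2$ and the quadratic term is continuous on all of $Y$, the sum rule for convex subdifferentials gives $\partial_2\RR_\e(\zeta,\eta) = \partial_2\RR(\zeta,\eta) + \e\V_Y\eta$, so the original inclusion \eqref{eq:q1} immediately yields \eqref{eq:ic}; then \eqref{eq:p1}--\eqref{eq:polar} follow from positive homogeneity alone (test the subdifferential inequality $\dual{\mu}{v-\dot q}\le \RR(\zeta,v)-\RR(\zeta,\dot q)$ with $v=0$ and $v=2\dot q$ to get the equality, then add it back to get the inequality for general $v$). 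You instead start from the ODE reformulation of Lemma~\ref{lem:proj}, unpack the variational inequality for the metric projection, and recover the subdifferential inclusion via the support-function identity $\RR(\zeta,\cdot)=\sigma_{\partial_2\RR(\zeta,0)}(\cdot)$. This is a legitimate path --- Lemma~\ref{lem:proj} is equivalent to \eqref{eq:q1}, so nothing is circular --- but it duplicates effort: the sum rule is already buried in the derivation of Lemma~\ref{lem:proj}, and going back through the projection re-derives what \eqref{eq:q1} hands you directly. Your route does make the finiteness of $\RR(\zeta,\dot q(t))$ explicit through the sup-formula, whereas in the paper's argument this is implicit from $\dot q(t)\in\dom\partial_2\RR(\zeta,\cdot)\subset\dom\RR(\zeta,\cdot)$.
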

\begin{proof}
The first assertion is due to the sum rule for convex subdifferentials, while \eqref{eq:p1}-\eqref{eq:polar} is an immediate consequence of the positive homogeneity of $\RR(\HH(q)(t),\cdot),$ cf.\,Assumption \ref{it:st1}.\ref{it:st11}.\end{proof}

\ro{In section \ref{sec:ex},  the solvability of \eqref{eq:n} equipped with initial datum $q_0 \in Y$ is proven by passing to the limit $\e \searrow 0$ in the viscous model. Here, the reformulation of \eqref{eq:q1} in terms of an ``energy identity" \cite{mielke} will turn out to be suitable. The starting point therefor is a chain rule for the total time-derivative of the energy, which is due to the smoothness of $\UU$ and the embedding $H^{1}(0,T;Y^\ast) \embed L^{\infty}(0,T;Y^\ast)$. This is contained in the following \begin{lemma}\label{lem:chain}
If $\ell \in H^{1}(0,T;Y^\ast)$ and $y \in H^{1}(0,T;Y),$ then \[\EE(\cdot,y(\cdot)) \in H^{1}(0,T;\R)\] with 
\begin{equation}\label{chain}
\frac{d}{dt} \EE(t,y(t))=\partial_t \EE(t,y(t))+\dual{\partial_y \EE(t,y(t))}{\dot y(t)}_{Y} \quad \ae (0,T).
\end{equation}Moreover,
\begin{equation}\label{deriv}\begin{aligned}
\partial_t \EE(t,y)&=-\dual{\dot \ell(t)}{y}_{Y} \quad \ae (0,T),\forall\,y\in Y,
\\\partial_y \EE(t,y)&= \UU'( y) -\ell(t) \quad \forall\,(t,y)\in [0,T]\times Y.
\end{aligned}\end{equation}
\end{lemma}
To prove Theorem \ref{ex} below, as well as a first uniform bound for the viscous solution (Lemma \ref{prop:c_est}), we will use the following
\begin{lemma}[Energy identity in the viscous case]
\label{prop:ei}
Let $\ell \in W^{1,1}(0,T;Y^\ast)$. Then, each solution $q$ of the viscous differential inclusion \eqref{eq:q1} satisfies
\begin{equation}\label{eq:ei}\begin{aligned}
\int_0^t  \RR(\HH(q)(\tau),\dot q(\tau)) d \tau +\int_0^t &\e \| \dot q (\tau)\|^2_{Y} d \tau + \EE(t,q(t))
\\& =\EE(0,q(0))+  \int_0^t  \partial_t \EE(\tau,q(\tau))  d \tau \quad \forall\,0 \leq t\leq T.\end{aligned}
\end{equation}
 \end{lemma}
\begin{proof}The result follows by integrating  \eqref{eq:p1} over $[0,t]$ and applying  \eqref{chain}.\end{proof}
}

\section{Uniform bounds}\label{ub}
This section is dedicated to proving that the viscous solutions $q_\e$ of \eqref{eq:q1} are uniformly bounded with respect to\ the viscosity parameter $\e$ in a suitable space. This will allow us later on in {section \ref{sec:ex}}  {to extract weakly convergent subsequences, so that a limit analysis of  the viscous problem as $\e$ approaches zero may be performed}.  {We aim at proving uniform bounds in $H^1(0,T;Y)$, by first showing a weaker estimate, which is contained in the next lemma. Then, we state an essential result (Lemma \ref{bound_k}) that in combination with 
Lemma \ref{prop:c_est} below will yield the desired uniform estimate for $q_\e$ (Proposition \ref{lem:est}). }
\begin{lemma}\label{prop:c_est}
Let $\ell \in W^{1,1}(0,T;Y^\ast)$ and let \ro{$q_\e \in   \hy $ be a solution to }\eqref{eq:q1} with right hand side $\ell$.
Then, it holds \begin{equation}\label{eq:c}
\|q_\e\|_{C([0,T];Y)} \leq c\, \|\ell\|_{W^{1,1}(0,T;Y^\ast)}\ro{+c_0},
 \end{equation}where $c,c_0>0$ are constants independent of $\e.$
   \end{lemma}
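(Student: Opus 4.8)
The strategy is a standard energy estimate using the equivalent formulation from Lemma \ref{prop:equiv}, exploiting the coercivity of $A$ and the non-negativity and positive homogeneity of $\RR$. First I would test the differential inclusion \eqref{eq:ic} with $\dot q_\e(t)$. Using \eqref{eq:p1}, this gives
\begin{equation*}
\dual{-\partial_q\EE(t,q_\e(t))}{\dot q_\e(t)}_Y = \RR(\HH(q_\e)(t),\dot q_\e(t)) + \e\|\dot q_\e(t)\|_Y^2 \geq 0,
\end{equation*}
since $\RR \geq 0$. Now $\partial_q\EE(t,q) = Aq - \ell(t)$, so the left-hand side equals $-\dual{Aq_\e(t)}{\dot q_\e(t)}_Y + \dual{\ell(t)}{\dot q_\e(t)}_Y$. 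Since $A$ is self-adjoint, $\dual{Aq_\e(t)}{\dot q_\e(t)}_Y = \frac12\frac{d}{dt}(Aq_\e(t),q_\e(t))_Y$, so we obtain
\begin{equation*}
\frac12\frac{d}{dt}(Aq_\e(t),q_\e(t))_Y \leq \dual{\ell(t)}{\dot q_\e(t)}_Y.
\end{equation*}

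The point is to get rid of the $\dot q_\e$ on the right-hand side, since we only want a bound on $\|q_\e\|_{C([0,T];Y)}$ and have no a priori control on $\dot q_\e$ that is uniform in $\e$. The classical trick here is integration by parts in time: integrating over $(0,t)$ and using $q_\e(0)=0$,
\begin{equation*}
\int_0^t \dual{\ell(s)}{\dot q_\e(s)}_Y\,ds = \dual{\ell(t)}{q_\e(t)}_Y - \int_0^t \dual{\dot\ell(s)}{q_\e(s)}_Y\,ds,
\end{equation*}
which is where the hypothesis $\ell \in W^{1,1}(0,T;Y^\ast)$ is used. Combining with the coercivity $(Ay,y)_Y \geq \alpha\|y\|_Y^2$ yields, for every $t \in [0,T]$,
\begin{equation*}
\frac{\alpha}{2}\|q_\e(t)\|_Y^2 \leq \|\ell(t)\|_{Y^\ast}\|q_\e(t)\|_Y + \int_0^t \|\dot\ell(s)\|_{Y^\ast}\|q_\e(s)\|_Y\,ds.
\end{equation*}

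Finally I would close the estimate. Denote $M := \sup_{s\in[0,t]}\|q_\e(s)\|_Y$ (finite since $q_\e \in C([0,T];Y)$) and bound $\|\ell(t)\|_{Y^\ast} \leq \|\ell\|_{W^{1,1}(0,T;Y^\ast)}$ (continuous embedding $W^{1,1}(0,T;Y^\ast)\embed C([0,T];Y^\ast)$, after possibly redefining on a null set). Taking the supremum over $t$ on the left, one gets $\tfrac{\alpha}{2}M^2 \leq 2\|\ell\|_{W^{1,1}}M$, hence $M \leq (4/\alpha)\|\ell\|_{W^{1,1}(0,T;Y^\ast)}$, which is exactly \eqref{eq:c} with $c = 4/\alpha$. (Alternatively one may apply Young's inequality $\|\ell(t)\|_{Y^\ast}\|q_\e(t)\|_Y \leq \tfrac{\alpha}{4}\|q_\e(t)\|_Y^2 + \tfrac{1}{\alpha}\|\ell(t)\|_{Y^\ast}^2$ and then Gronwall's lemma to handle the integral term; both routes work.) I do not expect any real obstacle here: the only subtlety is the integration by parts in time, which requires $q_\e \in H^1(0,T;Y)$ and $\ell \in W^{1,1}(0,T;Y^\ast)$ — both available — and the fact that $\RR \geq 0$ lets us simply discard the dissipation term rather than estimate it, so no information about $\HH(q_\e)$ or about the structure of $\RR$ beyond non-negativity is needed.
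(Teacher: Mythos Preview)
Your proof is correct and follows essentially the same route as the paper: test \eqref{eq:p1} with $\dot q_\e$, discard the non-negative dissipation and viscous terms, rewrite $(Aq_\e,\dot q_\e)_Y$ as a total derivative, integrate by parts in time to transfer the derivative from $q_\e$ to $\ell$, and close with coercivity of $A$ plus a sup/Young argument. The only cosmetic differences are that the paper works with the equivalent norm $\|\cdot\|_A$ induced by $A$ and closes via Young's inequality, whereas you divide through by $M$; both are fine.
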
  
   \begin{proof}
\ro{We start the proof by recalling the result from Lemma \ref{prop:ei}, according to which one has
\begin{equation}\label{eq:eiq}\begin{aligned}
\int_0^t  \RR (\HH(q_\e)(\tau),\dot q_\e(\tau)) d \tau +\int_0^t &\e \| \dot q_\e (\tau)\|^2_{Y} d \tau + \EE(t,q_\e(t))
\\& =\EE(0,q_0)+  \int_0^t  \partial_t \EE(\tau,q_\e(\tau))  d \tau \quad \forall\,0 \leq t\leq T .\end{aligned}
\end{equation}
Since the dissipation functional takes only nonnegative values, cf.\,Assumption \ref{it:st1}, one obtains  
\begin{equation*}\begin{aligned}
\EE(t,q_\e(t))
\leq \EE(0,q_0)+  \int_0^t  \partial_t \EE(\tau,q_\e(\tau))  d \tau \quad \forall\,0 \leq t\leq T.
\end{aligned}
\end{equation*}By using \eqref{def:e} and \eqref{deriv}, one deduces 
\begin{equation*}\begin{aligned}
\UU(q_\e(t))-\UU(q_0) \leq \dual{\ell(t)}{ q_\e(t)}_{Y}-\dual{\ell(0)}{ q_0}_{Y}-\int_0^t \dual{\dot \ell(\tau)}{ q_\e(\tau)}_{Y}  d \tau \quad \forall\,0 \leq t\leq T.
\end{aligned}
\end{equation*}In view of \eqref{uconv1}, it then follows 
\begin{equation}\label{aest}\begin{aligned}
\UU'(q_0)(q_\e(t)-q_0)+&\frac{\alpha}{2}\|q_\e(t)-q_0\|_Y^2  \leq \|\ell\|_{C([0,T];Y^\ast)}\|q_\e\|_{C([0,T];Y)}\\&\qquad + \|\dot \ell\|_{L^1(0,T;Y^\ast)}\|q_\e\|_{C([0,T];Y)}\ro{+\widehat c_0} \quad \forall\,0 \leq t\leq T,
\end{aligned}
\end{equation}where $\widehat c_0>0$ depends only on $q_0$ and $\ell(0).$ Since $\UU$ is convex and continuous, by Assumption \ref{it:0}, $\UU$ is locally Lipschitz continuous \cite[Prop.\,2.107]{b_s}, i.e, there exists $L_{q_0}>0$ so that 
\[|\UU'(q_0)(q_\e(t)-q_0)| \leq L_{q_0}\|q_\e(t)-q_0\|_Y \quad \forall\,0 \leq t\leq T.\]
Using this in \eqref{aest} results in 
\begin{equation*}\begin{aligned}
\frac{\alpha}{2}\|q_\e-q_0\|_{C([0,T];Y)}^2 &\leq \|\ell\|_{C([0,T];Y^\ast)}\|q_\e\|_{C([0,T];Y)}\\&\qquad + \|\dot \ell\|_{L^1(0,T;Y^\ast)}\|q_\e\|_{C([0,T];Y)}+L_{q_0}\|q_\e-q_0\|_{C([0,T];Y)}\ro{+\widehat c_0},\end{aligned}
\end{equation*}
which in light of Young's inequality yields
\begin{equation*}\begin{aligned}
\frac{\alpha}{4}\|q_\e-q_0\|_{C([0,T];Y)}^2 &\leq \|\ell\|_{C([0,T];Y^\ast)}\|q_\e\|_{C([0,T];Y)}\\&\qquad + \|\dot \ell\|_{L^1(0,T;Y^\ast)}\|q_\e\|_{C([0,T];Y)}\ro{+\widetilde c_0},\end{aligned}
\end{equation*}where $\widetilde c_0>0$ depends only on $\alpha, \widehat c_0$  and $L_{q_0}.$
Since \[\frac{\alpha}{8}\|q_\e\|_{C([0,T];Y)}^2\leq \frac{\alpha}{4}\|q_\e-q_0\|_{C([0,T];Y)}^2+\frac{\alpha}{4}\|q_0\|_{Y}^2,\]the previous estimate leads to 
\begin{equation*}\begin{aligned}
\frac{\alpha}{8}\|q_\e\|_{C([0,T];Y)}^2 &\leq \|\ell\|_{C([0,T];Y^\ast)}\|q_\e\|_{C([0,T];Y)}\\&\qquad + \|\dot \ell\|_{L^1(0,T;Y^\ast)}\|q_\e\|_{C([0,T];Y)}\ro{+\widetilde c_0}+\frac{\alpha}{4}\|q_0\|_{Y}^2.\end{aligned}
\end{equation*}
By applying again Young's inequality we can now conclude that there exist constants \ro{$c,c_0>0$, independent of $\e$,} so that
 \eqref{eq:c} is satisfied.}
 \end{proof}
 {As advertised at the beginning of this section, the next lemma will allow us to improve the estimate from Lemma \ref{prop:c_est}, thanks to Assumption \ref{it:st1}.\ref{it:st12} and to the fact that the state-dependence of $\RR$ is due to the  history operator $\HH$, cf.\,Assumption \ref{it:1}.}
\begin{lemma}\label{bound_k}
Let $y \in H^1(0,T;Y)$ be given with $\dot y (t) \in \dom \RR$  f.a.a.\, $t \in (0,T)$. Then, f.a.a.\, $t \in (0,T),$ the mapping 
 \[ [0,T] \ni s \mapsto \RR(\HH(y)(s),\dot y(t)) \in \R\]
 is Lipschitz continuous on $[0,T]$. Moreover, it holds
  \begin{equation}\label{kappa}\begin{aligned}
\int_0^T  \Big| {\frac{d}{ds} \Big[\RR(\HH(y)(s),\dot y(t))\Big]\Big|_{s=t}}\Big|\,dt &\leq 
L_\RR \int_0^T \Big\|\frac{d}{dt}\HH(y)(t)\Big\|_X\|\dot y(t)\|_Y\,dt 
\\&\leq c\,\|y\|^2_{C([0,T];Y)}+\frac{\alpha}{4} \|\dot y\|^2_{L^1(0,T;Y)} {+c_0},
\end{aligned}\end{equation}where  {$c,c_0>0$ depend only on $L_\RR, b,\tilde b, B$ and $\alpha$.}
\end{lemma}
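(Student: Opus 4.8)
The plan is to verify the claimed Lipschitz continuity directly from the key inequality in Assumption \ref{assu}.\ref{it:st12}, and then to obtain the integral bound \eqref{kappa} by combining that inequality with the explicit formula \eqref{d_h} for $\frac{d}{dt}\HH(y)$ and with the already established estimate \eqref{eq:c} (or rather the underlying coercivity argument). First I would fix $t \in (0,T)$ such that $\dot y(t) \in \dom \RR$ (which holds for a.a.\ $t$ by hypothesis), and consider the function $s \mapsto \RR(\HH(y)(s),\dot y(t))$. For two time points $s_1, s_2 \in [0,T]$, I would apply the four-term inequality of Assumption \ref{assu}.\ref{it:st12} with $\zeta_1 = \HH(y)(s_1)$, $\zeta_2 = \HH(y)(s_2)$, $\eta_1 = \dot y(t)$, and $\eta_2 = 0 \in \dom\RR$; since $\RR(\zeta,0)=0$ for every $\zeta$ by positive homogeneity, the four-term expression collapses to $\RR(\HH(y)(s_2),\dot y(t)) - \RR(\HH(y)(s_1),\dot y(t))$, so we get
\[
|\RR(\HH(y)(s_1),\dot y(t)) - \RR(\HH(y)(s_2),\dot y(t))| \leq L_\RR\,\|\HH(y)(s_1)-\HH(y)(s_2)\|_X\,\|\dot y(t)\|_Y .
\]
Since $\HH(y) \in W^{1,1}(0,T;X)$ is in particular absolutely continuous, $\|\HH(y)(s_1)-\HH(y)(s_2)\|_X \leq \int_{s_1}^{s_2}\|\frac{d}{dt}\HH(y)(\tau)\|_X\,d\tau$, which gives Lipschitz continuity of $s \mapsto \RR(\HH(y)(s),\dot y(t))$ on $[0,T]$ with constant $L_\RR\|\dot y(t)\|_Y \cdot \|\frac{d}{dt}\HH(y)\|_{L^\infty}$ — or, more carefully, absolute continuity with the pointwise derivative bound $|\RR(\HH(y)(\cdot),\dot y(t))'(s)| \leq L_\RR\|\frac{d}{dt}\HH(y)(s)\|_X\|\dot y(t)\|_Y$ for a.a.\ $s$.

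For the first inequality in \eqref{kappa}, I would evaluate the derivative bound at $s = t$ and integrate over $t \in (0,T)$, which yields directly
\[
\int_0^T |\RR(\HH(y)(\cdot),\dot y(t))'(t)|\,dt \leq L_\RR \int_0^T \Big\|\tfrac{d}{dt}\HH(y)(t)\Big\|_X \|\dot y(t)\|_Y\,dt .
\]
For the second inequality, I would use \eqref{d_h} to estimate $\|\frac{d}{dt}\HH(y)(t)\|_X \leq \|B(0)\|_{\LL(Y,X)}\|y(t)\|_Y + \int_0^t \|B'(t-s)\|_{\LL(Y,X)}\|y(s)\|_Y\,ds$. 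Both terms are controlled by $\|B\|_{W^{1,1}(0,T;\LL(Y,X))}$ times either $\|y(t)\|_Y$ or $\|y\|_{C([0,T];Y)}$; in particular $\|\frac{d}{dt}\HH(y)(t)\|_X \leq c(\|B\|)\,\|y\|_{C([0,T];Y)}$ uniformly in $t$. Substituting this into the first bound gives
\[
L_\RR \int_0^T \Big\|\tfrac{d}{dt}\HH(y)(t)\Big\|_X \|\dot y(t)\|_Y\,dt \leq L_\RR\, c(\|B\|)\,\|y\|_{C([0,T];Y)}\int_0^T \|\dot y(t)\|_Y\,dt .
\]
Finally, Young's inequality $ab \leq \frac{1}{\delta}a^2 + \delta b^2$ applied with $a = \|y\|_{C([0,T];Y)}$, $b = \|\dot y\|_{L^1(0,T;Y)}$ and the right choice of $\delta$ (so that the coefficient of $b^2$ equals $\alpha/4$) produces the desired form $c\,\|y\|^2_{C([0,T];Y)} + \frac{\alpha}{4}\|\dot y\|^2_{L^1(0,T;Y)}$ with $c$ depending only on $L_\RR$, $\|B\|$, $\alpha$.

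The only subtle point — and the one I would be most careful about — is the passage from the finite-difference Lipschitz bound to the pointwise statement about $\RR(\HH(y)(\cdot),\dot y(t))'(t)$ and its integrability: one must note that a Lipschitz function on $[0,T]$ is differentiable a.e.\ with its derivative bounded in absolute value by the pointwise Lipschitz bound $L_\RR\|\frac{d}{dt}\HH(y)(s)\|_X\|\dot y(t)\|_Y$ (valid since $\HH(y)$ is differentiable a.e.), and then to check that the resulting double dependence on $t$ (once through the base point $s=t$ in $\frac{d}{dt}\HH(y)$, once through $\dot y(t)$) is jointly measurable and integrable — which follows since $y \in H^1(0,T;Y)$ makes $t \mapsto \|\dot y(t)\|_Y$ lie in $L^2(0,T) \subset L^1(0,T)$ and $t \mapsto \|\frac{d}{dt}\HH(y)(t)\|_X$ is bounded. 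Everything else is a routine application of Fubini, Hölder, and Young, all of which were already used in the displayed computation just before the statement.
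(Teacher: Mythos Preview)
Your proposal is correct and follows essentially the same approach as the paper: reduce the four-term inequality of Assumption~\ref{assu}.\ref{it:st12} to a Lipschitz estimate in $\zeta$ (the paper states this directly, while you make the reduction explicit via $\eta_2=0$ and positive homogeneity), compose with $\HH(y)\in W^{1,\infty}(0,T;X)$ to get the pointwise derivative bound \eqref{d_R}, then estimate $\|\tfrac{d}{dt}\HH(y)(t)\|_X$ using \eqref{d_h} and finish with Young's inequality. The only cosmetic difference is that the paper writes out the two terms coming from \eqref{d_h} separately before bounding by $\|y\|_{C([0,T];Y)}\|\dot y\|_{L^1(0,T;Y)}$, whereas you pass directly to the uniform bound $\|\tfrac{d}{dt}\HH(y)(t)\|_X\le c(\|B\|)\|y\|_{C([0,T];Y)}$.
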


\begin{proof}
Since $\dot y (t) \in \dom \RR$  f.a.a.\,$t \in (0,T)$, we can make use of Assumption \ref{it:st1}.\ref{it:st12} to deduce the  Lipschitz continuity of the mapping \[X \ni \zeta \mapsto \RR(\zeta,\dot y(t)) \in \R, \quad \text{f.a.a.\,} t \in (0,T).\] Moreover, $\HH(y) \in W^{1,\infty}(0,T;X),$ due to $y \in H^1(0,T;Y)\embed L^\infty(0,T;Y)$ and \eqref{d_hh}. Hence, for a.a.\,$t \in [0,T]$,  $\RR(\HH(y)(\cdot),\dot y(t))$ is a composition of two Lipschitz continuous functions, which proves the first assertion.

To show the estimate \eqref{kappa}, we first observe that, f.a.a.\, $t \in (0,T),$ it holds 
  \begin{equation}\label{d_R}
   \Big| {\frac{d}{ds} \Big[\RR(\HH(y)(s),\dot y(t))\Big]}\Big|\leq  L_\RR \Big\|\frac{d}{ds}\HH(y)(s)\Big\|_X\|\dot y(t)\|_Y \quad \text{f.a.a.\,}s \in (0,T),
   \end{equation}where we employed again Assumption \ref{it:st1}.\ref{it:st12}.
 In view of \eqref{d_hh}, we further have 
 {\begin{equation}\label{estt}
\begin{aligned}
& \int_0^T \Big\|\frac{d}{dt}\HH(y)(t)\Big\|_X\|\dot y(t)\|_Y\,dt 
\\&\leq \int_0^T \Big(  { b(0)\|y(t)\|_Y+\|B(0,0)\|_X+ \int_0^t \|\partial_t B(t-s,0)\|_X\,ds} \Big)\|\dot y(t)\|_Y\,dt 
\\&\quad +\int_0^T  {\int_0^t \tilde b(t-s)\|y(s)\|_Y\,ds} \|\dot y(t)\|_Y\,dt 
\\&\leq b(0) \|y\|_{C([0,T];Y)} \|\dot y\|_{L^1(0,T;Y)}+c(B)
\|\dot y\|_{L^1(0,T;Y)}
\\&\quad +\|\tilde b\|_{L^1(0,T)} \|y\|_{C([0,T];Y)}\|\dot y\|_{L^1(0,T;Y)}
\\&\leq C(b,\tilde b,\alpha,L_{\RR})\,\|y\|^2_{C([0,T];Y)}+\frac{\alpha}{4L_{\RR}} \|\dot y\|^2_{L^1(0,T;Y)}+\tilde c(B,\alpha,L_{\RR}),
\end{aligned}
  \end{equation}where $C(b,\tilde b,\alpha,L_{\RR}),\tilde c(B,\alpha,L_{\RR})>0$ are constants that depend only on the given data;} note that in the last estimate we applied Young's inequality. In the light of \eqref{estt}, setting $s:=t$ in \eqref{d_R} and integrating the resulting estimate over time finally implies \eqref{kappa}. 
  \end{proof}
\begin{remark}\label{rem:bound}
  Lemma \ref{bound_k} will play an essential role not only in the proof of Proposition \ref{lem:est} below (which is crucial for showing existence of strong solutions to \eqref{eq:n}), but also later on, in section \ref{oc}, where the existence of global optimizers of \eqref{eq:min} is established (Theorem \ref{ex_opt}).
  \\
We underline that the proof of Lemma \ref{bound_k} exploits the fact that the history variable appearing in the first argument of $\RR$ is expressed in terms of an integral operator, cf.\,Assumption \ref{it:1}. Thanks to this aspect, we do not need to require smallness conditions  on the involved quantities to ensure the existence of solutions to \eqref{eq:n}, as opposed to other comparable contributions \cite{mie_ros, BS}. 
\\
In \cite{mie_ros} for instance,  the necessity of such an assumption is proven, as the state \ro{dependence} of the dissipation potential therein is due to a Nemytskii operator. 
We point out that this is consistent with our estimate \eqref{kappa}. Indeed, if we consider the simple case where $\HH$ is replaced by the  identity operator $Y \embed X$, then 
  \begin{equation*}
\int_0^T \Big\|\frac{d}{dt}\HH(y)(t)\Big\|_X\|\dot y(t)\|_Y\,dt  \leq  \|\dot y\|^2_{L^2(0,T;Y)}.
\end{equation*}Thus, one can no longer make use of \eqref{eq:c} in the proof of \eqref{eq:est1} below, since \eqref{eq:syst_dif222} is no longer available.
Instead of \eqref{eq:syst_dif222} below, one has  the estimate
 \begin{equation*}
\begin{aligned}
\alpha  \ro{\| \dot q_\e\|_{L^2(0,T;Y)}^2}& \leq  \|\ell\|_{H^1(0,T;Y^\ast)}    \ro{\| \dot q_\e\|_{L^2(0,T;Y)}}  \\&\quad+L_{\RR} \|\dot q_\e\|^2_{L^2(0,T;Y)},
  \end{aligned}\end{equation*}cf.\,the first inequality in \eqref{kappa}. 
Hence, the proof of \eqref{eq:est1} below is successful in the case $\HH=\mathbb{I}$ if 
\[L_\RR < \alpha,\]which is precisely the condition  \cite[Eq.\,(1.2)]{mie_ros}; note that our \ro{uniformly convex energy $\EE$} fits in the setting from \cite{mie_ros} if the parameter called $\kappa$ in \cite{mie_ros} takes the value $\alpha.$  {Regarding the above smallness assumption, we also refer to \cite[Sec.\,4.2]{mie_st_dep}, where a comparison to \cite{BS} is made.}
\end{remark}

In order to be able to prove uniform bounds for $q_\e$ in \ro{$\hy$}, and later on,  existence of solutions for \eqref{eq:n}, we need the following
\begin{definition}[Compatibility condition]\label{def}
In all what follows, we abbreviate
\[\mathfrak{L}(\ro{q_0,y_0}):=\{\ell \in C([0,T];Y^\ast):\ro{-\UU'(q_0)+}\ell(0) \in \partial_2 \RR(y_0,0) \},\]
where \ro{$q_0\in Y$ is the initial datum in \eqref{eq:q1}} and $y_0 \in X$ is given by Assumption \ref{it:1}.
\end{definition}
\begin{remark}
In view of our goal to show existence of differential solutions to \eqref{eq:n}, 
the necessity of the compatibility condition  in Definition \ref{def} should not be surprising. This type of requirement ensures that \eqref{eq:n} is true at the initial time point when the rate of the state   is zero, i.e., {$-\partial_q \EE(0,q(0)) \in \partial_2 \RR (\HH(q)(0),0)$}, see for instance \cite[Thm.\,3.5.2,\,p.\,162]{mr15}.
\end{remark}
 {The main result of this section is contained in the following}
\begin{proposition}[Uniform boundedness of  viscous solutions]\label{lem:est}
\ro{Let Assumption \ref{assu:ex} hold and let $\ell \in H^{1}(0,T;Y^\ast)$. Then, each solution of the viscous  problem \eqref{eq:q1} satisfies $q_\e \in   H^2(0,T;Y) $}. If $\ell \in H^{1}(0,T;Y^\ast) \cap \mathfrak{L}(\ro{q_0,y_0})$, then it holds \begin{equation}\label{eq:est1}
\|q_\e\|_{\ro{\hy}} \leq C\,\|\ell\|_{H^1(0,T;Y^\ast)}\ro{+c},
 \end{equation}where $C,c>0$ are independent of $\e.$
   \end{proposition}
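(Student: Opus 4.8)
The plan is to differentiate the viscous inclusion in time and test with $\dot q_\e$, using the equivalent formulation \eqref{eq:ic} together with the key estimate \eqref{kappa} of Lemma \ref{bound_k} to absorb the troublesome term coming from the state-dependence of $\RR$. First I would establish the $H^2$-regularity: since by Lemma \ref{lem:proj} the viscous problem is the non-smooth ODE $\dot q(t) = \tfrac1\e \V_Y^{-1}(\mathbb{I}-P_{\partial_2\RR(\HH(q)(t),0)})(-\partial_q\EE(t,q(t)))$, and by Lemma \ref{lip_F} the right-hand side is Lipschitz in $(\zeta,\omega)$, while $t \mapsto \HH(q)(t)$ lies in $W^{1,\infty}(0,T;X)$ (by \eqref{d_h}, since $q \in H^1_0(0,T;Y)$) and $t\mapsto -\partial_q\EE(t,q(t)) = -Aq(t)+\ell(t)$ lies in $H^1(0,T;Y^\ast)$ when $\ell \in H^1(0,T;Y^\ast)$, a standard bootstrap/chain-rule argument for compositions of Lipschitz and Sobolev maps shows $\dot q_\e \in H^1(0,T;Y)$, i.e. $q_\e \in H^2(0,T;Y)$.

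\textbf{The main estimate.} With $H^2$-regularity in hand, I would formally differentiate \eqref{eq:ic} in time. Write $w := \dot q_\e$. Differentiating the energy term gives $-\partial_q\EE(t,q_\e(t)) = -Aq_\e(t) + \ell(t)$, whose time derivative is $-Aw(t) + \dot\ell(t)$. For the subdifferential term, one uses the standard monotonicity trick: from \eqref{eq:ic} at two nearby times $t$ and $t+h$, subtracting and testing the difference of the two inclusions with $\dot q_\e(t+h) - \dot q_\e(t)$, dividing by $h^2$ and passing $h\to 0$. This yields, for a.a.\ $t$,
\begin{equation*}
(-Aw(t) + \dot\ell(t), w(t))_Y - \e\|\dot w(t)\|_Y^2 \geq \RR(\HH(q_\e)(\cdot),w(t))'(t),
\end{equation*}
where the right-hand side is precisely the derivative in $s$ (evaluated at $s=t$) of $s\mapsto\RR(\HH(q_\e)(s),w(t))$; this last term is what the monotonicity of $\partial_2\RR(\zeta,\cdot)$ leaves over, and it is controlled by $|\RR(\HH(q_\e)(\cdot),w(t))'(t)|$. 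Rearranging, using coercivity of $A$ and dropping the nonnegative $-\e\|\dot w\|^2$ term, and integrating over $(0,t)$:
\begin{equation*}
\alpha\int_0^t \|w(\tau)\|_Y^2\,d\tau + \tfrac12\|w(t)\|_Y^2 - \tfrac12\|w(0)\|_Y^2 \leq \int_0^t (\dot\ell(\tau),w(\tau))_Y\,d\tau + \int_0^T |\RR(\HH(q_\e)(\cdot),w(t))'(t)|\,dt.
\end{equation*}
Now \eqref{kappa} with $y := q_\e$ bounds the last integral by $c\,\|q_\e\|_{C([0,T];Y)}^2 + \tfrac\alpha4\|w\|_{L^1(0,T;Y)}^2 \le c\,\|q_\e\|_{C([0,T];Y)}^2 + \tfrac{\alpha T}4\|w\|_{L^2(0,T;Y)}^2$, and the first term on the right is handled by Young's inequality. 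Here is where the compatibility condition $\ell \in \mathfrak{L}$ enters: it guarantees $\ell(0) \in \partial_2\RR(y_0,0) = \partial_2\RR(\HH(q_\e)(0),0)$, so that \eqref{eq:ic} at $t=0$ forces $w(0) = \dot q_\e(0) = 0$ (equivalently, $0$ attains the minimum defining the ODE at $t=0$), killing the $\|w(0)\|_Y^2$ term. Combining with Lemma \ref{prop:c_est} to bound $\|q_\e\|_{C([0,T];Y)}$ by $c\|\ell\|_{W^{1,1}(0,T;Y^\ast)} \le c\|\ell\|_{H^1(0,T;Y^\ast)}$, absorbing the $\|w\|_{L^2}^2$ term on the left (it appears with coefficient $\alpha/4 < \alpha$), and using $\|\dot\ell\|_{L^2}\|w\|_{L^2} \le$ Young, one arrives at $\|\dot q_\e\|_{L^2(0,T;Y)}^2 \le C\|\ell\|_{H^1(0,T;Y^\ast)}^2$ with $C$ independent of $\e$; since $q_\e(0)=0$, Poincaré in time upgrades this to the full $H^1_0(0,T;Y)$-norm, giving \eqref{eq:est1}.

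\textbf{Anticipated obstacle.} The delicate point is the rigorous justification of the differentiated inequality: one cannot simply differentiate a subdifferential inclusion, so the difference-quotient/monotonicity argument must be carried out carefully, and one must verify that $s\mapsto\RR(\HH(q_\e)(s),w(t))$ is differentiable a.e.\ with the derivative bound — but this is exactly the content of Lemma \ref{bound_k}, which was proved precisely for this purpose. A secondary technical care point is interchanging the time-difference-quotient limit with the integration and ensuring the leftover term is genuinely of the form $\int_0^T|\RR(\HH(q_\e)(\cdot),\dot q_\e(t))'(t)|\,dt$ rather than something larger; this uses that $\partial_2\RR$ is the subdifferential of a convex functional whose dependence on $\zeta$ is Lipschitz in the mixed sense of Assumption \ref{assu}.\ref{it:st12}. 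Everything else — the $H^2$-bootstrap, the Young/Poincaré bookkeeping, and the $\e$-uniformity (which holds because the $\e\|\dot w\|^2$ term has the favorable sign and is discarded) — is routine.
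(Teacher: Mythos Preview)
Your overall strategy matches the paper's: establish $H^2$-regularity via the Lipschitz ODE formulation (Lemmas~\ref{lem:proj}--\ref{lip_F}), derive a time-differentiated estimate tested with $\dot q_\e$, control the state-dependent remainder by Lemma~\ref{bound_k}, use the compatibility condition to eliminate $\dot q_\e(0)$, and close with Lemma~\ref{prop:c_est}. That skeleton is correct.

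The gap is in your difference-quotient step. Testing the \emph{difference} of the inclusions \eqref{eq:ic} at times $t$ and $t+h$ with $\dot q_\e(t+h)-\dot q_\e(t)$ and dividing by $h^2$ does \emph{not} produce your displayed inequality: in the limit it yields a pairing with $\ddot q_\e$ rather than $\dot q_\e$, namely
\[
(-A\dot q_\e+\dot\ell,\ddot q_\e)_Y-\e\|\ddot q_\e\|_Y^2\ \ge\ -L_\RR\,\Big\|\frac{d}{dt}\HH(q_\e)\Big\|_X\|\ddot q_\e\|_Y,
\]
which gives no control of $\|\dot q_\e\|_{L^2}$. Your stated pointwise inequality also mixes incompatible terms --- a pairing with $w=\dot q_\e$ on the energy part but $\e\|\dot w\|_Y^2=\e\|\ddot q_\e\|_Y^2$ on the viscous part --- and the integrated version then acquires a boundary contribution $\tfrac12\|w(t)\|^2-\tfrac12\|w(0)\|^2$ with no provenance (if anything it should read $\tfrac{\e}{2}(\|\dot q_\e(t)\|^2-\|\dot q_\e(0)\|^2)$). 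The paper's route is different in a small but essential way: one evaluates the \emph{polar inequality} \eqref{eq:polar} at the shifted time $t\pm h$, tests it with the \emph{fixed} vector $\dot q_\e(t)$, subtracts the equality \eqref{eq:p1}, and divides by $h$. This keeps the pairing with $\dot q_\e(t)$ throughout, produces the viscous term $-\e(\ddot q_\e,\dot q_\e)_Y=-\tfrac{\e}{2}\tfrac{d}{dt}\|\dot q_\e\|_Y^2$, and gives in the limit the identity
\[
(-A\dot q_\e(t),\dot q_\e(t))_Y+\langle\dot\ell(t),\dot q_\e(t)\rangle_Y-\e(\ddot q_\e(t),\dot q_\e(t))_Y=\RR(\HH(q_\e)(\cdot),\dot q_\e(t))'(t)\quad\text{a.e.}
\]
From here your remaining steps (coercivity, integration, \eqref{kappa}, compatibility $\Rightarrow\dot q_\e(0)=0$ via \eqref{eq:syst_diff1}, Lemma~\ref{prop:c_est}, absorption and Young) go through exactly as you describe.
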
 
   \begin{proof}\ro{By Assumption \ref{assu:ex},  \eqref{eq:q1} admits at least one  solution \ro{$q_\e \in   \hy $}.}
 Thus, \ro{in view of $\ell \in H^1(0,T;Y^\ast)$, \eqref{deriv} and $\UU'\in C^1(Y)$, cf.\,Assumption \ref{it:0}, we have the regularity 
 \[\partial_q  \EE(\cdot, q_\e(\cdot))=\UU'(q_\e)-\ell \in H^1(0,T;Y^\ast).\]} By  the Lipschitz continuity of \[X \times Y^\ast \ni (\zeta,\omega) \mapsto \frac{1}{\e} \V_Y^{-1}(\mathbb{I}-P_{\partial_2 \RR(\zeta,0)})\o\in Y,\]cf.\,Lemma \ref{lip_F}, and since $q_\e$ satisfies \eqref{eq:syst_diff1} (Lemma \ref{lem:proj}), we then deduce that \[q_\e \in H^2(0,T;Y),\] see e.g.\, \cite[Thm.\,3,\,p.\,277]{evans}.
Now, in order to show \eqref{eq:est1},  let $h>0$ be arbitrary, but fixed. In view of \eqref{eq:polar}, it holds
  \begin{equation*}
\ro{   \dual{-\UU'(q_\e(t\pm h))}{\dot q_\e (t)}_Y}+\dual{\ell(t\pm h)}{\dot q_\e(t)}_{Y}-\e( \dot q_\e(t\pm h),\dot q_\e(t))_{Y} \leq \RR(\HH(q_\e)(t\pm h),\dot q_\e(t))   \end{equation*}a.e.\,in $(0,T).$
 Subtracting \eqref{eq:p1} from the above inequality yields
  \begin{equation}\label{hh}\begin{aligned}
 & \ro{  \frac{1}{h} \dual{-\UU'(q_\e(t\pm h))+\UU'(q_\e(t))}{\dot q_\e (t)}_Y}+\dual{\frac{\ell(t\pm h)-\ell(t)}{h}}{\dot q_\e(t)}_Y
  \\&\qquad - \e\Big( \frac{ \dot q_\e(t\pm h)- \dot q_\e(t)}{h},\dot q_\e(t)\Big)_{Y} \leq \frac{1}{h} (\RR(\HH(q_\e)(t\pm h),\dot q_\e(t))-\RR(\HH(q_\e)(t),\dot q_\e(t)) )  \end{aligned} \end{equation}a.e.\,in $(0,T).$
  In light of $q_\e \in H^2(0,T;Y)$, $\ell \in H^1(0,T;Y^\ast)$ and Lemma \ref{bound_k}, we can  pass to the limit $h \searrow 0$ in \eqref{hh}, which gives in turn  
  \begin{equation*}
\ro{  -\UU''(q_\e(t))[\dot q_\e(t)]^2}+\dual{\dot \ell (t)}{\dot q_\e(t)}_Y
-\e ( \ddot q_\e(t), \dot q_\e (t))_{Y} =  {\frac{d}{ds} \Big[\RR(\HH(q_\e)(s),\dot q_\e(t))\Big]\Big|_{s=t}}
\end{equation*}$\ae (0,T).$
Note that Lemma \ref{bound_k} is indeed applicable, since  $q_\e'(t) \in \dom \RR$ f.a.a.\, $t \in (0,T).$ Thus, \ro{by \eqref{uconv2}}, we get
 \[ \alpha \| \dot q_\e(t)\|_{Y}^2 +\frac{\e}{2} \frac{d}{dt}  \| \dot q_\e(t)\|_{Y}^2 \leq \dual{\dot \ell (t)}{\dot q_\e(t)}_{Y}- {\frac{d}{ds} \Big[\RR(\HH(q_\e)(s),\dot q_\e(t))\Big]\Big|_{s=t}}
 \quad  \ae  (0,T).\]
 Integrating over time and using \eqref{kappa} imply
   \begin{equation}\label{eq:syst_dif22}
\begin{aligned}
\alpha \ro{\| \dot q_\e\|_{L^2(0,T;Y)}^2} &+\frac{\e}{2}  ( \| \dot q_\e(T)\|_{Y}^2-\| \dot q_\e(0)\|_{Y}^2)
\\&\quad \leq \|\ell\|_{H^1(0,T;Y^\ast)}   \ro{\| \dot q_\e\|_{L^2(0,T;Y)}}  
  \\&\qquad +C\,\|q_\e\|^2_{C([0,T];Y)}+\frac{\alpha}{4} \|\dot q_\e\|^2_{L^1(0,T;Y)} {+c_0},  \end{aligned}\end{equation}where the constants  {$C,c_0>0$ depend only on $L_\RR, b,\tilde b,B$ and $\alpha$.}
  Further, from Assumption \ref{it:1} we deduce that $\HH(q_\e)(0)=y_0$.
 Since $\ell \in \mathfrak{L}(\ro{q_0,y_0})$ and $q_\e(0)=\ro{q_0}$, one has $-\partial_q \EE(0,q_\e(0)) \in \partial_2 \RR(y_0,0),$  and \eqref{eq:syst_diff1}  leads to $\dot q_\e(0)=0.$ Using this information  in  \eqref{eq:syst_dif22} gives in turn 
   \begin{equation}\label{eq:syst_dif222}
\begin{aligned}
\alpha  \ro{\| \dot q_\e\|_{L^2(0,T;Y)}^2}& \leq  \|\ell\|_{H^1(0,T;Y^\ast)}    \ro{\| \dot q_\e\|_{L^2(0,T;Y)}}  \\&\quad +C\,\|q_\e\|^2_{C([0,T];Y)}+\frac{\alpha}{4} \|\dot q_\e\|^2_{L^1(0,T;Y)} {+c_0}.
  \end{aligned}\end{equation}
The desired assertion now follows by employing 
  Young's inequality and Lemma \ref{prop:c_est}.
  \end{proof}

\section{Existence of solutions for the \ro{rate-independent} system}\label{sec:ex}
With the uniform bounds established in the last section at hand, we can now 
 focus on showing that the \ro{rate-independent} evolution
\begin{equation}\label{eq:n1}
  \begin{gathered}
-\partial_q \EE(t,q(t)) \in \partial_2 \RR (\HH(q)(t),\dot{q}(t)) \quad \ae (0,T), \quad q(0) =\ro{q_0}
  \end{gathered}
  \end{equation} admits solutions, \ro{where $q_0 \in Y$ is a fixed initial value}.  {Even though our energy is convex and the usual technique would involve a time-discretization \cite{mielke}, we choose to prove the existence result for \eqref{eq:n1} by letting $\e \to 0$ in the viscous formulation provided by Lemma \ref{prop:ei}. Recall that we proceed in this manner  due to future investigations regarding the optimal control of \eqref{eq:n1}, see Remark \ref{rem:fw} below. }

First, we need to formulate an assumption concerning the convergence behaviour of the  dissipation potential \ro{$\RR$ and of the history operator $\HH$}. We point out that this will also be essential for the existence of optimal solutions to \eqref{eq:min} below, see proof of Theorem \ref{ex_opt}.

\begin{assumption}\label{assu:r}
Suppose that $\dom \RR\subset Y$ is a closed set. Let $y_m \weakly y$ in $H^1(0,T;Y)$, where $\{y_m\}  \subset  \YY:=\{y \in H^1(0,T;Y):y(0)=q_0,\ \dot y (t) \in \dom \RR \text{ f.a.a.\,}t \in (0,T)\}$. Then,
\begin{enumerate}
\item\label{m1}
It holds 
 \begin{equation*}\begin{aligned}
\liminf_{m \to \infty}  \int_0^T \RR(\HH(y_m)(t),\dot y_m(t))  \,dt  \geq  \int_0^T \RR(\HH(y)(t),\dot y(t))  \,dt ;\end{aligned}\end{equation*}
\item \label{m2} \ro{For each $v \in  L^2(0,T;\dom \RR )$ there exists a sequence $\{v_m\} \subset  L^2(0,T;\dom \RR )$ with  $v_m \weakly v$ in $L^2(0,T;Y)$ and 
\begin{align*}\limsup_{m \to \infty} & \int_0^T \RR(\HH(y_m)(t), v_m(t)) +\UU'(y_m(t))(v_m(t)) \,dt  \\
&\leq  \int_0^T \RR(\HH(y)(t), v(t)) +\UU'(y(t))(v(t)) \,dt.\end{align*}}
\end{enumerate}
\end{assumption}
\begin{remark}\label{rem:rr}
We observe that, since $\dom \RR$ is assumed to be a closed subset of $Y$, the set $\YY$ is weakly closed in $H_0^1(0,T;Y)$, so that $y \in \YY$ in Assumption \ref{assu:r}; note that  $\dom \RR$ is convex, in view of the convexity of $\RR$ with respect to its second argument (Assumption \ref{it:st1}.\ref{it:st11}). \end{remark}
\begin{remark}
We underline that Assumption \ref{assu:r} is satisfied by all the  functionals  from Remark \ref{rem:r},  {provided that $Y \embed \embed X$, $B(t,\cdot):X \to X$ is continuous for each $t \in [0,T]$ and if a requirement such as Assumption \ref{assu:rd} holds true}. See the proof of Theorem \ref{ex1} below, where  the case of the dissipation functional \eqref{def:r} from section \ref{u0} is addressed.
\\To see that Assumption \ref{assu:r} is also fulfilled by the functional $\widehat R:L^2(\G_2) \times H^1_{\G_1}(\O) \to \R$, 
\[\widehat R(\zeta,\eta)=\int_{\G_2} g(\zeta) | \eta|\,ds\]
appearing in the modelling of \ro{history-dependent} evolutions with Tresca's friction (Remark \ref{rem:r}),
one uses the assumed continuity of $B(t,\cdot):L^2(\G_2)\to L^2(\G_2)$ and that of $g$,  see Remark \ref{rem:r}, to show
\[g(\HH(y_m)) \to g(\HH(y)) \quad \text{in }L^2(0,T;L^2(\G_2)) \quad \text{as }m \to \infty
.\] Then, \ro{if $\UU':L^2(0,T;H^1_{\G_1}(\O))\to L^2(0,T; H^1_{\G_1}(\O)^\ast)$ is weakly  continuous},  Assumption \ref{assu:r}.\ref{m2} holds true by setting $v_m:=v$, while Assumption \ref{assu:r}.\ref{m1} is due to \[\dot y_m \weakly \dot y \quad \text{in } L^2(0,T;L^2(\G_2))  \quad \text{as }m \to \infty\] and \cite[Thm.\,3.23]{dac08}. The same applies to the functional defined in \eqref{def:t0}.
\end{remark}

%

 {We are now in the position to formulate one of the main results of this paper.}

\begin{theorem}[Solvability of the \ro{history-dependent},\,\ro{rate-independent}  problem]\label{ex}
Let \ro{Assumptions \ref{assu:ex} and  \ref{assu:r} hold}. Then, for each
 $\ell \in H^{1}(0,T;Y^\ast) \cap \mathfrak{L}(\ro{q_0,y_0})$,  the \ro{rate-independent} evolution \eqref{eq:n1} admits at least one  solution $q \in \ro{\hy}$. 
\end{theorem}

\begin{proof}
Let $\e>0$ be small enough, as in Assumption \ref{assu:ex}, and let $q_\e \in \ro{H^1(0,T;Y)}$ be a solution to \eqref{eq:q1} with right hand side\ $\ell.$ From Proposition \ref{lem:est} we know that there exists a (not relabelled) subsequence of $\{q_\e\}$ and some $q \in \ro{H^1(0,T;Y)}$ so that 
 \begin{equation}\label{eq:h1}
 q_\e \weakly q \quad \text{in }\ro{H^1(0,T;Y)} \quad \text{as }\e \searrow 0,
  \end{equation}\ro{which means that
   \begin{equation}\label{eq:h1p}
 q_\e(t) \weakly q(t) \quad \text{in }Y \quad \text{as }\e \searrow 0, \quad \forall\,t \in [0,T].
  \end{equation}
According to  Lemma \ref{prop:ei}, $q_\e$ satisfies
 \begin{equation}\label{eq:eii}\begin{aligned}
\int_0^t  \RR (\HH(q_\e)(\tau),\dot q_\e(\tau)) d \tau +\int_0^t &\e \| \dot q_\e (\tau)\|^2_{Y} d \tau + \EE(t,q_\e(t))
\\& =\EE(0,q_0)+  \int_0^t  \partial_t \EE(\tau,q_\e(\tau))  d \tau \quad \forall\ t \in [0,T].\end{aligned}
\end{equation}
 As a  consequence of \eqref{def:e}, combined with the weakly lower semicontinuity of $\UU$ (Assumption \ref{it:0}), the linearity of $\ell(t)$ and \eqref{eq:h1p}, we have 
  \begin{equation}\label{eq:h1_t}
 \EE(t,q(t)) \leq \liminf_{\e \searrow 0} \EE(t,q_\e(t)) \quad \forall\, t \in [0,T].\end{equation}
Thus, thanks to\ Assumption \ref{assu:r}, we can deduce from \eqref{eq:eii} with $t:=T$ that
 \begin{equation}\label{for_l1}\begin{aligned}
 \int_0^T \RR(\HH(q)(\tau),\dot q(\tau))\,d\tau& + \EE(T,q(T))
 \\&\leq  \liminf_{\e \searrow 0} \int_0^T \RR(\HH(q_\e)(\tau),\dot q_\e(\tau))\,d\tau  +\liminf_{\e \searrow 0} \EE(T,q_\e(T))
\\&\leq \EE(0,q_0)+\lim_{\e \searrow 0}  \int_0^T  \partial_t \EE(\tau,q_\e(\tau))  d \tau  
\\&\quad=\EE(0,q_0)+ \int_0^T \partial_t \EE(\tau,q(\tau))  d \tau,
\end{aligned}\end{equation}where the last identity is due to \eqref{deriv} and \eqref{eq:h1}. Applying \eqref{chain} integrated over $[0,T]$ then yields
 \begin{equation}\label{for_l11}\begin{aligned}
 \int_0^T \RR(\HH(q)(\tau),\dot q(\tau))\,d\tau \leq \int_0^T -\dual{\partial_q \EE(\tau,q(\tau))}{\dot q(\tau)}_Y  d \tau.
\end{aligned}\end{equation}}
\ro{Our next goal is to show \eqref{rstar00} below.
To this end, let $v \in L^2(0,T;\dom \RR )$ be arbitrary, but fixed and  denote by $\{v_\e\}$ the recovery sequence from Assumption \ref{assu:r}.\ref{m2}.
By employing  \eqref{eq:polar}  and \eqref{deriv} we then deduce 
 \begin{equation}\label{for_l}\begin{aligned}
\limsup_{\e \searrow  0} & \int_0^T -\e(\dot q_\e(t), v_\e(t))_Y+\dual{\ell(t)}{v_\e(t)}_Y \,dt 
\\& \leq \limsup_{\e \searrow  0} \int_0^T \RR(\HH(q_\e)(t), v_\e(t))+\UU'(q_\e(t))(v_\e(t)) \,dt  \\
&\leq  \int_0^T \RR(\HH(q)(t), v(t)) +\UU'(q(t))(v(t)) \,dt.
 \end{aligned}\end{equation} 
 In view of $v_\e \weakly v$ in $L^2(0,T;Y)$, by Assumption \ref{assu:r}.\ref{m2}, and \eqref{eq:h1}, we infer that the term $\dual{\dot q_\e}{ v_\e}_{L^2(0,T;Y)}$ is uniformly bounded w.r.t.\,$\e$. From \eqref{for_l} one then has 
 \begin{equation}\begin{aligned}
 \int_0^T \dual{\ell(t)}{v(t)}_Y \,dt  \leq   \int_0^T \RR(\HH(q)(t), v(t)) +\UU'(q(t))(v(t)) \,dt
 \end{aligned}\end{equation}  for all $v\in L^2(0,T;Y)$. Let now $\varphi \in C^\infty[0,T], \varphi \geq 0,$ and $\eta \in \dom \RR$ be arbitrary but fixed. 
Testing with $v:=\varphi \eta$ and using the positive homogeneity of $\RR(\HH(q(t)),\cdot)$ (Assumption \ref{it:st1}.\ref{it:st11}), as well as the fundamental lemma of calculus of variations, and \eqref{deriv}, give in turn 
\begin{equation}\label{rstar}
\dual{  -\partial_q \EE(t,q(t)) }{\eta}_{Y}\leq \RR(\HH(q)(t),\eta) \quad \forall\, \eta \in \dom \RR,  \quad   \ae (0,T).
\end{equation}
Thanks to \eqref{for_l11}, \eqref{rstar} tested with $\dot q(t)$ leads to 
\begin{equation}\label{rstar00}
\dual{  -\partial_q \EE(t,q(t)) }{\dot q(t)}_{Y} = \RR(\HH(q)(t),\dot q(t)) \quad    \ae (0,T).
\end{equation}
By subtracting \eqref{rstar00} from \eqref{rstar}, we can conclude that the weak limit $q$ from \eqref{eq:h1} is a solution to \eqref{eq:n1}; note that $q(0)=q_0,$ by \eqref{eq:h1p}. This competes the proof.}
 \end{proof}

We end this section with some useful equivalent formulations of \eqref{eq:n1}. As a  result
of the positive homogeneity of $\RR$ from Assumption \ref{it:st1}.\ref{it:st11}, we have the following
\begin{lemma}\label{lem:cor}
Let $\ell \in H^{1}(0,T;Y^\ast) \cap \mathfrak{L}\ro{(q_0,y_0)}$. Then, each solution \ro{$q \in H^1(0,T;Y)$  } \ro{associated with }\eqref{eq:n1} satisfies 
 \begin{subequations}  \begin{gather}
\dual{  -\partial_q \EE(t,q(t)) }{\dot q(t)}_{Y}=\RR(\HH(q)(t),\dot q(t)),\label{eq:p11}
\\  \dual{  -\partial_q \EE(t,q(t)) }{v}_{Y}\leq \RR(\HH(q)(t),v) \quad \forall\, v \in Y,  \quad   \ae (0,T). \label{eq:polar1}\end{gather}\end{subequations}
\end{lemma}
We remark that  \ro{Lemma  \ref{lem:cor}} corresponds to Lemma \ref{prop:equiv}.  \ro{We also arrive at a counterpart of the energy identity from Lemma \ref{prop:ei}, which is stated next. This will be used in the upcoming section to prove Theorem \ref{ex_opt}, as well as the boundedness of the control-to-state operator (Lemma \ref{lem:est_ris}).
\begin{proposition}[Energy identity]
\label{lem:ei}
Let $\ell \in H^{1}(0,T;Y^\ast) \cap \mathfrak{L}\ro{(q_0,y_0)}$. Then, each solution \ro{$q \in H^1(0,T;Y)$  } associated with \eqref{eq:n1} fulfills 
\begin{equation}\label{eq:ei_ris}\begin{aligned}
\int_0^t  \RR(\HH(q)(\tau),\dot q(\tau)) d \tau + \EE(t,q(t))
 =\EE(0,q(0))+  \int_0^t  \partial_t \EE(\tau,q(\tau))  d \tau \ \ \forall\,0 \leq t\leq T.\end{aligned}
\end{equation}
 \end{proposition}
\begin{proof}The result follows by integrating  \eqref{eq:p11} over $[0,t]$ and applying  \eqref{chain}.\end{proof}
}

 {We end our discussion concerning the existence of solutions for \eqref{eq:n1} by pointing to section \ref{u0}. There, we will obtain uniqueness results for a particular class of problems \eqref{eq:n1} which feature infinite-valued dissipation functionals that display linear behaviour in their second argument. }

\section{Existence of optimal solutions for the control problem}\label{oc}
The result in Theorem \ref{ex} enables us to investigate the existence of optimal solutions for control problems governed by \eqref{eq:n1}, as we will next see. 
To be able to make use of Theorem \ref{ex},  {Assumptions \ref{assu:ex} and  \ref{assu:r} are tacitly assumed in this section, without mentioning them  every time.} We are concerned with  the following   minimization problem
 \begin{equation}\label{eq:min}
 \left.
 \begin{aligned}
  \min \quad & j(q)+\frac{1}{2}\|\ell\|^2_{H^1(0,T;Z)}\\
  \text{s.t.} \quad & (\ell,q) \in \ro{{H^1(0,T;Z)}\times {H^1(0,T;Y)}},
  \\\quad & q \text{ solves }
\eqref{eq:n1} \text{ with right hand side\ }\ell \in  \mathfrak{L}\ro{(q_0,y_0)} , \end{aligned}
 \quad \right\}
\end{equation}where $Z$ is a real reflexive Banach space so that $Z \embed  \embed Y^\ast$ and {$j:H^1(0,T;Y) \to \R$ is a weakly lower semicontinuous mapping, bounded from below.}  {We recall that $q_0\in Y$ is the initial datum in \eqref{eq:n1}, while $y_0\in X$ is given by Assumption \ref{it:1}. The compatibility set $\mathfrak{L}(q_0,y_0)$ was introduced  in Definition \ref{def} as \[\mathfrak{L}({q_0,y_0}):=\{\ell \in C([0,T];Y^\ast):{-\UU'(q_0)+}\ell(0) \in \partial_2 \RR(y_0,0) \}.\]
Throughout this section, we denote the admissible set of \eqref{eq:min} by  
\[{\MM:=\{(\ell,q) \in {H^1(0,T;Z)} \times {H^1(0,T;Y)}: q \text{\ solves\ }\eqref{eq:n1} \text{ { with r.h.s.\,}}\ell \in  {\mathfrak{L}(q_0,y_0)}\}.}\]
Note that $\MM$ is not empty, in view of Theorem \ref{ex}.}

 {Even though the unique solvability of  \eqref{eq:n1} is not available, we can  apply the classical direct method of calculus of variations to prove existence of optimal solutions for \eqref{eq:min}, since  $\MM$  is weakly sequentially closed (in  ${H^1(0,T;Z)}\times {H^1(0,T;Y)}$). That is, for each sequence $\{(\ell_n,q_n)\}\subset \MM$, it holds  \begin{equation}\label{wc}(\ell_n,q_n)\weakly (\ell,q) \quad \text{in }{H^1(0,T;Z)}\times {H^1(0,T;Y)} \text{ as }n \to \infty \Longrightarrow (\ell,q)\in \MM.\end{equation}The convergence  \eqref{wc} is due to the energy identity from Proposition \ref{lem:ei} and \eqref{eq:polar1}, cf.\,the proof of Theorem \ref{ex_opt} below.}

 {While applying the direct method of calculus of variations, we need to be able to extract weakly convergent (sub)sequences as in \eqref{wc}. In this context, we will rely on the coercive nature (w.r.t.\,$\ell$) of the objective in \eqref{eq:min} combined with the boundedness of  the (multivalued) solution operator of \eqref{eq:n1}. The latter is established  next.} 

\begin{lemma}\label{lem:est_ris}
For every $\ell \in H^1(0,T;Y^\ast) \cap \mathfrak{L}\ro{(q_0,y_0)}$, the  solutions $q \in   H^1(0,T;Y) $ to \eqref{eq:n1}  satisfy \begin{equation}\label{h1_est}
\|q\|_{\ro{\hy}} \leq c\|\ell\|_{H^1(0,T;Y^\ast)}\ro{+c_0},
 \end{equation}where $\ro{c,c_0>0}$ are independent of $\ell.$
   \end{lemma}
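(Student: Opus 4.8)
The plan is to mimic the argument used in Proposition \ref{lem:est}, but now working directly with the non-viscous inclusion \eqref{eq:n1} via the characterizations \eqref{eq:p11}--\eqref{eq:polar1} established in the preceding lemma, rather than with the viscous system. The point is that the bound \eqref{eq:est1} is uniform in $\e$, so one hopes it survives the limit $\e\searrow 0$; but since a solution $q$ of \eqref{eq:n1} need not be a limit of viscous solutions (Theorem \ref{ex} only produces \emph{one} such solution), the cleanest route is to rederive the estimate from scratch for an arbitrary solution $q\in H^1_0(0,T;Y)$.

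\textbf{Step 1: the $C([0,T];Y)$-bound.} First I would test \eqref{eq:p11} and integrate in time exactly as in Lemma \ref{prop:c_est}: using \eqref{def:e} and $\RR\geq 0$ one gets $\int_0^t (Aq(\tau),\dot q(\tau))_Y\,d\tau\leq \int_0^t\dual{\ell(\tau)}{\dot q(\tau)}_Y\,d\tau$, then an integration by parts on the right-hand side, the coercivity of $A$, a supremum over $t$, and Young's inequality yield
\begin{equation}\label{eq:cbound_ris}
\|q\|_{C([0,T];Y)}\leq c\,\|\ell\|_{W^{1,1}(0,T;Y^\ast)}\leq c\,\|\ell\|_{H^1(0,T;Y^\ast)}.
\end{equation}
This step is verbatim the proof of Lemma \ref{prop:c_est} with $\e=0$, so it presents no difficulty.

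\textbf{Step 2: the $H^1_0(0,T;Y)$-bound via difference quotients.} Now I would exploit \eqref{eq:polar1}, which holds for a.e.\ $t$ and all $v\in Y$. The subtle point here is regularity: to run the difference-quotient argument of Proposition \ref{lem:est} one wants $q\in H^2(0,T;Y)$, which we get for free in the viscous case from Lemma \ref{lip_F} but not a priori for a solution of \eqref{eq:n1}. I expect this to be \textbf{the main obstacle}. One way around it: insert $v=\dot q(t\pm h)$ into \eqref{eq:polar1} at time $t$, and $v=\dot q(t)$ into \eqref{eq:p11} at time $t\pm h$ (as is legitimate since $\dot q(t\pm h),\dot q(t)\in\dom\RR$ a.e.), subtract, divide by $h$, and integrate over $t$. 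Using $q\in H^1(0,T;Y)$ the translates $\tau_{\pm h}\dot q\to\dot q$ in $L^2(0,T;Y)$ and $\frac{\tau_{\pm h}\ell-\ell}{h}$ is bounded in $L^2(0,T;Y^\ast)$, while the right-hand side difference quotients of $t\mapsto\RR(\HH(q)(t\pm h),\dot q(t))$ are controlled in $L^1$ uniformly in $h$ by Lemma \ref{bound_k} (applied with $y:=q$), since $\RR(\HH(q)(\cdot),\dot q(t))$ is Lipschitz there. Passing to $\liminf$ as $h\searrow 0$, the coercivity term $\alpha\|\cdot\|_Y^2$ is lower semicontinuous while the rest converges, and one obtains
\begin{equation}\label{eq:h1bound_ris}
\alpha\,\|q\|_{H^1_0(0,T;Y)}^2\leq \|\ell\|_{H^1(0,T;Y^\ast)}\,\|q\|_{H^1_0(0,T;Y)}+C\,\|q\|_{C([0,T];Y)}^2+\frac{\alpha}{4}\,\|\dot q\|_{L^1(0,T;Y)}^2,
\end{equation}
where crucially there is no $\e$-term left (the $\frac{\e}{2}(\|\dot q_\e(T)\|_Y^2-\|\dot q_\e(0)\|_Y^2)$ term of \eqref{eq:syst_dif22} is simply absent here), and $C$ depends only on $L_\RR,\|B\|,\alpha$. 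Note that the initial-rate argument of Proposition \ref{lem:est} (using $\ell\in\mathfrak L$ to force $\dot q(0)=0$) is no longer needed precisely because there is no boundary term to absorb; the compatibility $\ell\in\mathfrak L$ is still assumed because it is needed for existence via Theorem \ref{ex}.

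\textbf{Step 3: conclusion.} Finally I would combine \eqref{eq:cbound_ris} and \eqref{eq:h1bound_ris}, absorbing $\frac{\alpha}{4}\|\dot q\|_{L^1(0,T;Y)}^2\leq\frac{\alpha}{4}\|q\|_{H^1_0(0,T;Y)}^2$ into the left, using $\|q\|_{C([0,T];Y)}^2\leq c\|\ell\|_{H^1(0,T;Y^\ast)}^2$ from Step 1, and then applying Young's inequality to $\|\ell\|_{H^1(0,T;Y^\ast)}\|q\|_{H^1_0(0,T;Y)}$ to absorb a further $\frac{\alpha}{4}\|q\|_{H^1_0(0,T;Y)}^2$. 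This leaves $\frac{\alpha}{2}\|q\|_{H^1_0(0,T;Y)}^2\leq c\,\|\ell\|_{H^1(0,T;Y^\ast)}^2$, with $c$ independent of $\ell$, which is \eqref{h1_est}. Should the $H^2$-regularity in Step 2 turn out genuinely unavailable for non-viscous solutions, the fallback is to prove the bound for the viscous solutions $q_\e$ via Proposition \ref{lem:est}, pass to the weak $H^1_0$-limit (as in Theorem \ref{ex}, using weak lower semicontinuity of the norm), and observe that the limit is \emph{a} solution; but since \eqref{eq:n1} may have solutions not obtained this way, the difference-quotient argument above — which only uses \eqref{eq:p11}--\eqref{eq:polar1} and Lemma \ref{bound_k} — is the one I would present, taking care to justify the limit $h\searrow0$ purely from $q\in H^1(0,T;Y)$ and the $L^1$-control of the dissipation difference quotients.
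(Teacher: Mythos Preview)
Your proposal is correct and follows essentially the same three-step strategy as the paper: the $C([0,T];Y)$-bound via the argument of Lemma~\ref{prop:c_est} applied to \eqref{eq:p11}, a difference-quotient argument based on \eqref{eq:p11}--\eqref{eq:polar1} controlled by Lemma~\ref{bound_k}, and the combination via Young's inequality.

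The one point worth correcting is your concern about $H^2$-regularity in Step~2: this is not an obstacle at all. In the proof of Proposition~\ref{lem:est} the \emph{only} place where $q_\e\in H^2(0,T;Y)$ is used is to pass to the limit in the term $\e\big(\frac{\dot q_\e(t\pm h)-\dot q_\e(t)}{h},\dot q_\e(t)\big)_Y\to \e(\ddot q_\e(t),\dot q_\e(t))_Y$. In the non-viscous setting this term is simply absent. Hence, testing \eqref{eq:polar1} at times $t\pm h$ with $v=\dot q(t)$ and subtracting \eqref{eq:p11} at time $t$ (this is the paper's choice, dual to yours), one may pass to the limit $h\searrow 0$ \emph{pointwise a.e.} using only $q\in H^1(0,T;Y)$, $\ell\in H^1(0,T;Y^\ast)$ and the Lipschitz continuity of $s\mapsto\RR(\HH(q)(s),\dot q(t))$ from Lemma~\ref{bound_k}, to obtain directly the identity
\[
(-A\dot q(t),\dot q(t))_Y+\langle\dot\ell(t),\dot q(t)\rangle_Y=\RR(\HH(q)(\cdot),\dot q(t))'(t)\quad\text{a.e.\ in }(0,T).
\]
Integrating this over $(0,T)$ and invoking \eqref{kappa} gives \eqref{eq:h1bound_ris} immediately; no integrated $\liminf$ argument or weak lower semicontinuity is needed (in your variant the difference quotients and translates all converge strongly in $L^2$ anyway). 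Your observation that the boundary term $\frac{\e}{2}(\|\dot q_\e(T)\|_Y^2-\|\dot q_\e(0)\|_Y^2)$ and hence the compatibility argument for $\dot q_\e(0)=0$ are no longer needed is correct.
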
 

   \begin{proof}
   By arguing exactly as in  the proof of Lemma \ref{prop:c_est}, where this time one uses  \eqref{eq:ei_ris}, we obtain the estimate
\begin{equation}\label{eq:c_ris}
\|q\|_{C([0,T];Y)} \leq C\,\|\ell\|_{W^{1,1}(0,T;Y^\ast)}\ro{+c_0},
 \end{equation}where $C,c_0>0$ are independent of $\ell.$
From \eqref{eq:polar1} and arguments employed at the beginning of the proof of Proposition \ref{lem:est} we further deduce
 \begin{equation*}
\ro{-\UU''(q(t))[\dot q(t)]^2}+\dual{\dot \ell (t)}{\dot q(t)}_Y
 = {\frac{d}{ds} \Big[\RR(\HH(q)(s),\dot q(t))\Big]\Big|_{s=t}}
\end{equation*}$\ae (0,T).$
Again, we note that $\dot q (t) \in \dom \RR$ f.a.a.\,$t \in (0,T)$, which allows us to apply Lemma \ref{bound_k} for $y:=q$. Using \eqref{uconv2}, integrating the above identity over time and  \eqref{kappa}, imply
   \begin{equation*}
\begin{aligned}
{\alpha}\ro{\|\dot q\|_{L^2(0,T;Y)}^2} &\leq \|\ell\|_{H^1(0,T;Y^\ast)} \ro{\|\dot q\|_{L^2(0,T;Y)}} 
  \\&\quad -\int_0^T  {\frac{d}{ds} \Big[\RR(\HH(q)(s),\dot q(t))\Big]\Big|_{s=t}}\,dt
\\&  \leq \|\ell\|_{H^1(0,T;Y^\ast)} \ro{\|\dot q\|_{L^2(0,T;Y)}}
\\&\quad +C\,\|q\|^2_{C([0,T];Y)}+\frac{\alpha}{4} \|\dot q\|^2_{L^1(0,T;Y)} {+c_0},
  \end{aligned}\end{equation*}which results in 
   \begin{equation*}
\begin{aligned}
\ro{\|\dot q\|_{L^2(0,T;Y)}^2} \leq c\,\|\ell\|^2_{H^1(0,T;Y^\ast)}+\tilde C\,\|q\|^2_{C([0,T];Y)} {+c_0},  \end{aligned}\end{equation*}
where $c,\tilde C, {c_0}>0$ are independent of $\ell$.
Employing \eqref{eq:c_ris}  in the above estimate then yields the desired assertion.
  \end{proof}
 {We now have the necessary ingredients to prove the main result of this section.}
\begin{theorem}[Existence of optimal solutions]\label{ex_opt}
Under \ro{Assumptions \ref{assu:ex} and \ref{assu:r}}, the optimization problem \eqref{eq:min} admits at least one solution.
\end{theorem}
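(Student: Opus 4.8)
\textbf{Proof plan for Theorem \ref{ex_opt}.}
The plan is to run the classical direct method of the calculus of variations. Let $\{(\ell_n,q_n)\}$ be a minimizing sequence for \eqref{eq:min}: each $q_n \in H_0^1(0,T;Y)$ solves \eqref{eq:n1} with right-hand side $\ell_n \in H^1_0(0,T;Z)\cap\mathfrak L$, and the objective values $j(q_n)+\tfrac12\|\ell_n\|^2_{H^1(0,T;Z)}$ converge to the infimum $\mu$, which is finite because $j$ is bounded below and the feasible set is nonempty by Theorem \ref{ex} (one must first check feasibility: since $H^1_0(0,T;Z)\cap\mathfrak L$ is nonempty — e.g.\ $\ell\equiv 0$ lies in $\mathfrak L$ provided $0\in\partial_2\RR(y_0,0)$, which holds since $\RR(y_0,\cdot)\ge 0=\RR(y_0,0)$ — Theorem \ref{ex} supplies at least one admissible $q$). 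From $j(q_n)+\tfrac12\|\ell_n\|^2_{H^1(0,T;Z)}\to\mu$ and $j$ bounded below we get $\{\ell_n\}$ bounded in $H^1_0(0,T;Z)$. Then Lemma \ref{lem:est_ris} — applied with the continuous embedding $Z\embed Y^\ast$ so that $\|\ell_n\|_{H^1(0,T;Y^\ast)}\le c\|\ell_n\|_{H^1(0,T;Z)}$ — gives $\{q_n\}$ bounded in $H_0^1(0,T;Y)$.

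Since $Z$ is reflexive, $H^1_0(0,T;Z)$ is reflexive, so after passing to a subsequence $\ell_n\weakly\ell$ in $H^1_0(0,T;Z)$ and $q_n\weakly q$ in $H^1_0(0,T;Y)$ for some admissible candidates $\ell,q$. By the compact embedding $Z\embed\embed Y^\ast$ and the Aubin–Lions lemma, $\ell_n\to\ell$ strongly in $C([0,T];Y^\ast)$ (in particular $\ell_n(0)\to\ell(0)$, and since $\ell_n(0)\in\partial_2\RR(y_0,0)$, which is closed and convex, $\ell(0)\in\partial_2\RR(y_0,0)$, so $\ell\in\mathfrak L$); also $\ell_n\to\ell$ in $H^1(0,T;Y^\ast)$-weakly and $\dot\ell_n\weakly\dot\ell$ in $L^2(0,T;Y^\ast)$. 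It remains to pass to the limit in the state relation to show $q$ solves \eqref{eq:n1} with right-hand side $\ell$, i.e.\ that feasibility is preserved under weak convergence. For this I mimic the limit passage in the proof of Theorem \ref{ex} almost verbatim: one uses Lemma \ref{cor} to write, for each $q_n$, the two relations \eqref{eq:p11}–\eqref{eq:polar1}; the pointwise-in-$v$ inequality \eqref{eq:polar1} integrated against a recovery sequence $\{v_n\}\subset\YY$ from Assumption \ref{assu:r}.\ref{m2} passes to the limit using $q_n\weakly q$ and the convergence of $\ell_n$, yielding \eqref{conc} and hence \eqref{rstar}; the energy identity \eqref{eq:p11} is handled exactly as in \eqref{for_l1}, using Assumption \ref{assu:r}.\ref{m1} for the liminf of the dissipation term and weak lower semicontinuity of $t\mapsto\tfrac12\|q_n(t)\|_A^2$ with $q_n(0)=q(0)=0$, to get \eqref{rstar0}; combining \eqref{rstar} tested with $\dot q$ and \eqref{rstar0} gives the energy equality \eqref{rstar00}, and subtracting shows $q$ solves \eqref{eq:n1}. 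Thus $(\ell,q)$ is feasible for \eqref{eq:min}.

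Finally, optimality: $j$ is weakly lower semicontinuous on $H^1_0(0,T;Y)$ by hypothesis, so $j(q)\le\liminf_n j(q_n)$; and $\ell\mapsto\tfrac12\|\ell\|^2_{H^1(0,T;Z)}$ is convex and continuous, hence weakly lower semicontinuous, so $\tfrac12\|\ell\|^2_{H^1(0,T;Z)}\le\liminf_n\tfrac12\|\ell_n\|^2_{H^1(0,T;Z)}$. Adding these,
\begin{equation*}
j(q)+\tfrac12\|\ell\|^2_{H^1(0,T;Z)}\le\liminf_{n\to\infty}\Big(j(q_n)+\tfrac12\|\ell_n\|^2_{H^1(0,T;Z)}\Big)=\mu,
\end{equation*}
so $(\ell,q)$ is a global minimizer. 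I expect the only delicate step to be the preservation of feasibility under weak convergence — that is, re-running the liminf/limsup arguments of Theorem \ref{ex} with $\e\searrow 0$ replaced by $n\to\infty$ and the viscous terms absent — which is why Assumption \ref{assu:r} and the boundedness estimate of Lemma \ref{lem:est_ris} (itself resting on Lemma \ref{bound_k}) are the crucial ingredients; the lower-semicontinuity of the cost and the extraction of weakly convergent subsequences are routine.
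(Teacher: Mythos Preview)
Your proposal is correct and follows essentially the same approach as the paper: direct method with a minimizing sequence, boundedness via Lemma \ref{lem:est_ris}, weak subsequential limits, and then re-running the limit passage of Theorem \ref{ex} (using \eqref{eq:p11}--\eqref{eq:polar1} together with Assumption \ref{assu:r} and the compact embedding $Z\embed\embed Y^\ast$) to show feasibility of the limit pair, followed by weak lower semicontinuity of the cost. Your explicit verification that $\ell\in\mathfrak L$ is slightly redundant since $\ell\in H_0^1(0,T;Z)$ already forces $\ell(0)=0\in\partial_2\RR(y_0,0)$, but it is not incorrect.
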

\begin{proof}
 {The proof makes use of the direct method of calculus of variations, Lemma \ref{lem:est_ris} and \eqref{wc}. Similar arguments may be found for instance  in \cite{fr,fr_a,kms}, where the authors also deal with the existence of optimal solutions for problems governed by state systems with multivalued solution operators.}
We recall that the admissible set is given by 
\[\ro{\MM=\{(\ell,q) \in {H^1(0,T;Z)} \times {H^1(0,T;Y)}: q \text{\ solves\ }\eqref{eq:n1} \text{ \ro{ with r.h.s.\,}}\ell \in  \ro{\mathfrak{L}(q_0,y_0)}\}}\]
and that is not empty, in view of Theorem \ref{ex}.
Let $\{(\ell_n, q_n)\} \subset \MM$ be an infimal sequence, i.e.,
\begin{equation}\label{eq:inf}
j(q_n)+\frac{1}{2}\|\ell_n\|^2_{H^1(0,T;Z)} \overset{n \to \infty}{\to} \inf \{j(q)+\frac{1}{2}\|\ell\|^2_{H^1(0,T;Z)}: (\ell,q) \in \MM\}.
\end{equation}
{Since $j$ is bounded from below, by assumption, we can select a subsequence so that }
\begin{equation}\label{eq:ell}
\ell_n \weakly \ell \quad \text{in }\ro{H^1(0,T;Z)} \quad \text{as }n \to \infty,
\end{equation}
\ro{which in particular leads to
\begin{equation}\label{eq:ce}
\ell_n \to  \ell \quad \text{in }L^2(0,T;Y^\ast) \quad \text{as }n \to \infty,
\end{equation}due to the compact embedding $H^1(0,T;Z) \embed \embed L^2(0,T;Y^\ast)$. Recall that $Z \embed \embed Y^\ast,$ by assumption.
We observe that $\ell  \in \mathfrak{L}(q_0,y_0),$ owing to  \eqref{eq:ell} and $\ell_n \in \mathfrak{L}(q_0,y_0)$.
In view of Lemma \ref{lem:est_ris} and \eqref{eq:ell} combined with $Z \embed Y^\ast$,  there exists $q \in \ro{H^1(0,T;Y)}$  so that
\begin{equation}\label{eq:q_n}
q_n \weakly  q \quad \text{in }\ro{H^1(0,T;Y)} \quad \text{as }n \to \infty, \quad \ro{q(0)=q_0}.\end{equation}
Now, our aim is to show that $q \in \hy$ satisfies \eqref{eq:n1} with right hand side\ $\ell$.
\ro{Since $\ell  \in \mathfrak{L}(q_0,y_0)$, this will ensure the feasibility of $(\ell,q)$.} 
To this end, we argue as in the proof of 
Theorem \ref{ex}, where this time we rely on \eqref{eq:ei_ris} and \eqref{eq:polar1}.
As $q_n$ solves \eqref{eq:n1} (with right hand side\, $\ell_n$), we have the energy identity
\begin{equation}\label{eq:ei_n}\begin{aligned}
\int_0^T  \RR(\HH(q_n)(\tau),\dot q_n(\tau)) d \tau &+ \UU(q_n(T))-\dual{\ell_n(T)}{q_n(T)}_Y
\\& = \UU(q_0)-\dual{\ell_n(0)}{q_0}_Y -  \int_0^T \dual{\dot \ell_n(\tau)}{q_n(\tau)}_Y
  d \tau,\end{aligned}
\end{equation}
in view of Lemma \ref{lem:ei}, \eqref{def:e} and \eqref{deriv}. 
Then, we use the arguments for the proof of \eqref{eq:h1_t}, \eqref{eq:ell} and the compact embedding $Z \embed \embed Y^\ast$, as well as Assumption \ref{assu:r}.\ref{m1}, to obtain
 \begin{equation}\begin{aligned}
 \int_0^T &\RR(\HH(q)(\tau),\dot q(\tau))\,d\tau + \UU(q(T))-\dual{\ell(T)}{q(T)}_Y
 \\&\leq  \liminf_{n \to \infty} \int_0^T \RR(\HH(q_n)(\tau),\dot q_n(\tau))\,d\tau  +\liminf_{n \to \infty} \UU(q_n(T))-\lim_{n\to \infty} \dual{\ell_n(T)}{q_n(T)}_Y
\\&\leq \UU(q_0)-\lim_{n\to \infty} \dual{\ell_n(0)}{q_0}_Y - \lim_{n\to \infty} \int_0^T \dual{\dot \ell_n(\tau)}{q_n(\tau)}_Y \,d\tau
\\&\quad=\UU(q_0)-\dual{\ell(0)}{q_0}_Y +  \int_0^T \dual{\dot \ell(\tau)}{q(\tau)}_Y\,d \tau .\end{aligned}\end{equation}Note that the last identity is due to \eqref{eq:q_n}, the compact embedding $H^1(0,T;Y) \embed \embed L^2(0,T;Z^\ast)$ and \eqref{eq:ell}. Applying \eqref{chain} integrated over $[0,T]$ then yields
 \begin{equation}\label{qq}\begin{aligned}
 \int_0^T \RR(\HH(q)(\tau),\dot q(\tau))\,d\tau \leq \int_0^T -\dual{\partial_q \EE(\tau,q(\tau))}{\dot q(\tau)}_Y  d \tau.
\end{aligned}\end{equation}
To show \eqref{rstar3} below, let $v \in L^2(0,T;\dom \RR )$ be arbitrary, but fixed and  denote by $\{v_n\}$ the recovery sequence from Assumption \ref{assu:r}.\ref{m2}.
By employing  \eqref{eq:polar1}  and \eqref{deriv} associated with $\ell_n$, we deduce 
 \begin{equation}\label{for1}\begin{aligned}
\limsup_{n \to \infty} & \int_0^T \dual{\ell_n(t)}{v_n(t)}_Y \,dt 
\\& \leq \limsup_{n \to \infty} \int_0^T \RR(\HH(q_n)(t), v_n(t))+\UU'(q_n(t))(v_n(t)) \,dt  \\
&\leq  \int_0^T \RR(\HH(q)(t), v(t)) +\UU'(q(t))(v(t)) \,dt.
 \end{aligned}\end{equation} 
 In view of $v_n \weakly v$ in $L^2(0,T;Y)$, by Assumption \ref{assu:r}.\ref{m2}, and \eqref{eq:ce}, we infer from \eqref{for1} that 
 \begin{equation}\begin{aligned}
 \int_0^T \dual{\ell(t)}{v(t)}_Y \,dt  \leq   \int_0^T \RR(\HH(q)(t), v(t)) +\UU'(q(t))(v(t)) \,dt
 \end{aligned}\end{equation}  for all $v\in L^2(0,T;Y)$. 
By arguing as in the proof of \eqref{rstar}, we  obtain 
\begin{equation}\label{rstar3}
\dual{  -\partial_q \EE(t,q(t)) }{\eta}_{Y}\leq \RR(\HH(q)(t),\eta) \quad \forall\, \eta \in \dom \RR,  \quad   \ae (0,T).
\end{equation}
The estimate \eqref{rstar3} tested with $\dot q(t)$ together with \eqref{qq} allows us to conclude that $q \in \hy$ satisfies \eqref{eq:n1} with right hand side\ $\ell$, i.e., $(\ell,q) \in \MM.$} Since \[(\ell_n,q_n)  \weakly (\ell,q) \quad \text{in }H^1(0,T;Z) \times H^1(0,T;Y),\] by \eqref{eq:ell} and \eqref{eq:q_n}, and since  $j:H^1(0,T;Y) \to \R$ is weakly lower semicontinuous, by assumption, one arrives at 
 \[ j(q)+\frac{1}{2}\|\ell\|_{H^1(0,T;Z)}^2 \leq \liminf_{n \to \infty} j(q_n)+\frac{1}{2}\|\ell_n\|_{H^1(0,T;Z)}^2.
 \]
 As $(\ell,q) $ is admissible for \eqref{eq:min}, we deduce in light of \eqref{eq:inf} that $(\ell,q) $ is an optimal solution to \eqref{eq:min}. The proof is now complete.
\end{proof}

 {We end this section with some comments concerning comparable contributions.
\begin{remark}The proof of Theorem \ref{ex_opt} shows that $\MM$ is weakly sequentially closed in $H^1(0,T;Z)\times H^1(0,T;Y)$, i.e, it satisfies \eqref{wc}. In \cite{kms}, the authors also rely on this property of the admissible set to show existence of optimal controls for problems governed by RIS with non-convex energies. Therein, the notion of ``vanishing-viscosity" solution is used. By contrast, \cite{fr} deals with optimization problems where the set of solutions for the state system consists of energetic solutions. We point out that this work contains the   first result concerning the existence of optimal solutions for minimization problems governed by RIS with non-convex energies. We remark that, as our energy is uniformly convex, the concepts ``vanishing-viscosity" and ``energetic"  solution coincide with the classical differential one \cite{mielke}, that is, the notion we use in our Theorem \ref{ex}. 
A more general setting than in \cite{fr} can be found in \cite{fr_a} where optimal solutions are constructed via an approximation scheme in an abstract framework  \cite[Sec.\,2]{fr_a} that applies to RIS with non-convex energies \cite[Sec.\,3-4]{fr_a}.
\\If the convergence in \eqref{eq:ell} were strong, \eqref{wc} could be interpreted as an ``upper semicontinuity" property of our multivalued solution operator \cite{fr,fr_a}.  This is precisely the case  in \cite{fr}, where the set  for optimal controls is assumed to be compact  \cite[(J1)]{fr}, such that the existence of strong convergent sequences is ensured \cite[Proof of Thm.\,3.4]{fr}. \end{remark}
\begin{remark}[Infinite-dimensional MPEEC]
We mention that \eqref{eq:min} can be seen as an infinite-dimensional MPEEC {``mathematical programm with evolutionary equilibrum constraints"}. Finite dimensional MPEECs with applications to delamination and micromagnetics have already been investigated in  \cite{mpeec,del}, see also the references therein. Effective numerical methods  have been proposed in \cite{mpeec} in the context where (in each time step) the control-to-state map is single-valued, which allows for the use of the standard implicit programming approach. At the time-discrete level, the solution of the previous state system enters the present state system \cite[Eq.\,(3.10)]{mpeec} as a parameter, so that there is reason to believe that the results in \cite{mpeec} can be extended to applications  where the respective continuous model takes a memory variable into account. In this situation, the only difference would be that all the previous unique solutions of (time-discrete) state systems would enter the present problem as parameters.  On the other hand, if we look at the purely continuous version of the model in \cite{mpeec}, which becomes an infinite-dimensional MPEEC, then  the methods from the proof of Theorem \ref{ex_opt} may be applicable as long as the admissible set is weakly sequentially closed.
\end{remark}}

%
\section{Unique solvability for a class of \ro{rate-independent} systems}\label{u0}

In this section, the focus lies on proving uniqueness of solutions for \ro{history-dependent} evolutions of the type \eqref{eq:n}, 
where the dissipation potential is linear  with respect to its second argument. Such dissipation potentials  include functionals \ro{with unbounded subdifferential}, as we will see below.
For convenience of the reader, we will work in the rest of the paper with concrete Sobolev spaces, although a general framework, as in the previous sections, may also be considered.

We begin by fixing the  setting. Let $\O \subset \R^N$, $N \in \{2,3\}$, be a bounded Lipschitz domain.
The problem we investigate reads as follows
\begin{equation}\label{eq:n2}
  \begin{gathered}
-\partial_q \EE(t,q(t)) \in \partial_2 R (\HH(q)(t),\dot{q}(t)) \quad \ae (0,T), \quad q(0) = 0,
  \end{gathered}
  \end{equation} \ro{that is, $q_0=0$ in the sequel, for convenience.}
The dissipation functional $R:\lo \times \hon \to [0,\infty]$ is given by 
 \begin{equation}\label{def:r}
R(\zeta,\eta):=\left\{ \begin{aligned}\int_\Omega \kappa(\zeta) \,  \eta \;dx  &\quad \text{if }\eta \in \CC,
\\\infty  &\quad \text{otherwise,}\end{aligned} \right.
\end{equation}where \begin{equation}\label{def:c}
\CC:=\{\eta \in \hon:\eta \geq 0 \text{ a.e. in }\Omega \}.
\end{equation}
In the rest of the paper, the following assumption is supposed to hold true, without mentioning it every time.
\begin{assumption}\label{assu:c}
For the mappings involved   in \eqref{eq:n2} we require:
 \begin{enumerate}

   \item\label{it:c0} Given a  time-dependent load $\ell:[0,T] \to \hoon$, 
the energy $\EE:[0,T] \times \hon \rightarrow \mathbb{R}$ is defined
as 
 \ro{
\begin{equation}\label{def:e1}\begin{aligned}
\EE(t, y)&:=\UU(y)-\dual{\ell(t)}{y}_{\hon},
\end{aligned}\end{equation}where $\UU:\hon \to \R$ is  uniformly convex, i.e., there exists $\alpha>0$ so that $\forall\, y_1,y_2 \in  \hon, \forall\,\theta \in [0,1]$ it holds
 \begin{equation}\label{uconv0}(1-\theta)\UU(y_1)+\theta \UU(y_2)\geq \UU(y_\theta)+ \frac{\alpha}{2} \theta (1-\theta)\|y_2-y_1\|_{\hon}^2,\end{equation}where $y_\theta:=\theta y_1+(1-\theta)y_2.$ Moreover, $\UU \in C^2(\hon)$.
}
\item \label{it:c1}
The non-linear function  $\kappa: \R \to [0,\infty)$ is assumed to be Lipschitz continuous with Lipschitz constant $L_\kappa>0$. 
 \item \label{it:c2}  The \textit{history operator} $\HH:L^1(0,T;H^1(\O)) \to W^{1,1}(0,T;\lo)$ is given by
 \begin{equation*}
 [0,T] \ni t \mapsto \HH(y)(t):=\int_0^t  {B(t-s,y(s))} \,ds +y_0 \in \lo,\end{equation*}
where $y_0\in \lo$ is fixed and  {$B:[0,T]\times \hon\to \lo$ satisfies 
\begin{itemize}
\item for almost all $t\in (0,T),$ the mapping $B(t,\cdot):\lo \to \lo$ is continuous and $B(t,\cdot):\hon \to \lo$ is Lipschitzian with Lipschitz-constant $b(t)$ satisfying $b \in L^2(0,T);$ in addition, $B(\cdot,0)\in L^1(0,T;\lo);$
\item for each $y \in \hon$ the mapping $B(\cdot,y):[0,T] \to \lo$ is differentiable and $\partial_t B(\cdot,y):[0,T] \to \lo$ is measurable;
\item for almost all $t\in (0,T),$ the mapping $\partial_t B(t,\cdot):\hon \to \lo$ is Lipschitz-continuous with Lipschitz-constant $\tilde b(t)$ satisfying $\tilde b \in L^1(0,T);$ in addition, $\partial_t B(\cdot,0)\in L^1(0,T;\lo)$.
\end{itemize}}
 \end{enumerate}
\end{assumption} 

\begin{remark}
The structure of the evolution \eqref{eq:n2} is inspired by damage models with fatigue (cf.\, \cite{alessi, alessi1} and the references therein).   In this context, the time-dependent load $\ell:[0,T] \to \hoon$ appearing in \eqref{def:e1}  acts on the body $\O$ and  induces a certain damage, which  is expressed in terms of the variable $q : [0,T] \to H^{1}(\Omega)$. The history operator $\HH$ models how the  damage experienced by the material affects its fatigue level, while the degradation mapping $\kappa:\R \to [0,\infty)$ appearing in \eqref{def:r} indicates in which measure the fatigue affects the toughness  of the material. The latter is usually described by a fixed  (nonnegative) constant \cite{fn96, FK06}, while in the present model it changes  in time, depending on $\HH(q)$. To be more precise, the value of the  toughness of the body at time point $t \in [0,T]$ is given by  $\kappa(\HH(q))(t)$, cf.\ \eqref{eq:n2} and \eqref{def:r}. Hence, damage  models with a history variable take into account the following crucial aspect: the occurrence of damage is favoured in regions where fatigue accumulates. 
\end{remark}

\begin{remark}
[The model \eqref{eq:n2} as \ro{state-independent} RIS with non-convex energy]\label{non-conv}
We remark that 
 \eqref{eq:n2} can be rewritten in terms of the following equivalent 
formulation \begin{equation}\label{eq:ic_ris0}  
-\partial_q \EE(t,q(t))-\F(q)(t) \in \partial \II_\CC(\dot q(t)) \quad  \ae (0,T).\end{equation}This is just the dual formulation of \eqref{eq:hds0}, see Remark \ref{rem:r}.  
The
 differential inclusion \eqref{eq:ic_ris0} has the advantage that the ``dissipation functional" becomes the indicator functional of a \textit{fixed set}, namely $\CC$, cf.\,\eqref{def:c}. We emphasize that, even when $\EE$ is quadratic, 
one cannot argue  that the term appearing on the left hand side, i.e., $-\partial \widehat \EE(q)(t):=-\partial_q \EE(t,q(t))-\F(q)(t)$, satisfies
\begin{equation}\label{non_c} 
\begin{aligned}
&\dual{\partial \widehat \EE(q_1)(t)-\partial \widehat \EE(q_2)(t)}{q_1(t)-q_2(t) }_{\hon}
\\&=\alpha \|(q_1-q_2)(t)\|_{\hon}^2+\dual{\F(q_1)(t)-\F(q_2)(t)}{q_1(t)-q_2(t)}
\\&\qquad \geq \beta \|(q_1-q_2)(t)\|^2 \quad  \ae (0,T),
\quad \forall\,q_1, q_2 \in \hhyy,
\end{aligned}\end{equation}
where $\beta >0$ and $\|\cdot\|$ is a norm in a suitable space. The estimate in \eqref{non_c} is  in general not true. This is due to the fact that $q \mapsto \F(q)$ is monotonically decreasing in applications, since the level of  toughness of the material $\F(q)$ decreases as the value of the damage $q$ increases. Thus, a relation reminiscent of the uniform convexity of the underlying ``energy" \cite{mt2004}, such as \eqref{non_c}, is not to be expected here. In fact, if the Lipschitz constant of $\kappa$ is sufficiently large and $\alpha$ is small enough, \eqref{non_c} is violated. Thus, when rephrased as \eqref{eq:ic_ris0}, \eqref{eq:n2} can be seen as a RIS with non-convex energy and \ro{state-independent} ``dissipation functional" $\II_\CC$.  
\\We finally  point out that, if $\HH=\mathbb{I}$ and the smallness assumption $L_\kappa <\alpha$ (that ensures existence of solutions, see Remark \ref{rem:bound}) is satisfied, \ro{ then \eqref{non_c} is replaced by \begin{equation} 
\begin{aligned}
&\dual{\partial \widehat \EE(q_1)(t)-\partial \widehat \EE(q_2)(t)}{q_1(t)-q_2(t) }_{\hon}
\\&=\alpha \|(q_1-q_2)(t)\|_{\hon}^2+\dual{\kappa(q_1(t))-\kappa(q_2(t))}{q_1(t)-q_2(t)}_{H^1(\O)}
\\&\qquad \geq \alpha \|(q_1-q_2)(t)\|_{\hon}^2-L_\kappa\|q_1(t)-q_2(t)\|_{L^2(\O)}
{\|q_1(t)-q_2(t)\|}_{H^1(\O)}
\\&\qquad \geq(\underbrace{\alpha-L_\kappa}_{>0}) \|(q_1-q_2)(t)\|_{H^1(\O)}^2 \quad  \ae (0,T),
\quad \forall\,q_1, q_2 \in \hhyy.
\end{aligned}\end{equation}}Therefore, in the setting of \eqref{eq:n2} with $\HH=\mathbb{I}$ such a smallness assumption would force the uniform convexity of the ``energy" $\widehat \EE$. \end{remark}

Before we proceed, we observe that \eqref{eq:n2} fits in the setting of the previous sections. Indeed, if we set 
\begin{equation}\begin{aligned}\label{set}
X:=\lo,& \quad Y:=\hon, \quad \RR:=R, 
\end{aligned}\end{equation}
 then
Assumptions \ref{it:0}, \ref{it:1} and \ref{it:st1} are satisfied; here we note that $\dom \RR=\CC.$ 
We also observe that  the compatibility set from Definition \ref{def} is now given by 
\begin{equation}\label{def:l}
\ro{\mathfrak{L}(y_0):=\{\ell \in C([0,T];\hoon):-\UU'(0)+ \ell(0)-\kappa(y_0) \in \CC^\circ \}.}
\end{equation}
This is due to \[ \partial_2 R(y_0,0) =\{\zeta \in \hoon:\dual{\zeta}{\eta}_{\hon} \leq R(y_0,\eta) \quad \forall\,\eta \in \CC \},\]and \eqref{def:r}.

\ro{In order to be able to apply Theorem \ref{ex} for our  concrete setting, we  need to formulate some requirements on the general energy $\UU$ that guarantee that the assertions in Assumptions \ref{assu:ex} and \ref{assu:r} are true. These are given by the following
\begin{assumption}\label{assu:exd}
For each $M>0$, there exists $L(M)>0$ so that
\[\|\UU'(y_1)- \UU'(y_2)\|_{\hoon}\leq L(M)\|y_1-y_2\|_{\hon}\]for  $y_i \in \hon$ with $\|y_i\|\leq M, i=1,2.$ Moreover, there exist constants $c,c_0>0$ so that
\[\|\UU'(y)\|_{\hoon}\leq c\|y\|_{\hon}+c_0\quad \forall\, y \in \hon.\]
\end{assumption} 
\begin{assumption}\label{assu:rd}
Let $y_m \weakly y$ in $H^1(0,T;\hon)$, where $\{y_m\}  \subset  \YY:=\{y \in H^1_0(0,T;\hon):\dot y (t) \in \CC \text{ f.a.a.\,}t \in (0,T)\}$. Then,
for each $v \in  L^2(0,T;\CC )$ there exists a sequence $\{v_m\} \subset  L^2(0,T;\CC )$ with  $v_m \weakly v$ in $L^2(0,T;\hon)$ and 
\begin{align*}\limsup_{m \to \infty}  \int_0^T \UU'(y_m(t))(v_m(t)) \,dt  \leq  \int_0^T \UU'(y(t))(v(t)) \,dt.\end{align*}
\end{assumption}
\begin{remark}Under Assumption \ref{assu:exd}, the requirement in  Assumption \ref{assu:rd} is satisfied, provided that $\UU': W \to \hoon$ is continuous, where $W$ is a Banach space for which  the compact embedding $\hon \embed \embed W$ is true.\end{remark}We are now in the position to prove the following}
\begin{theorem}[Existence of solutions for \eqref{eq:n2}]\label{ex1}
\ro{Suppose that Assumptions \ref{assu:exd} and \ref{assu:rd} are satisfied.} Then, for each
 $\ell \in H^{1}(0,T;\hoon) \cap \ro{\mathfrak{L}(y_0)}$, the \ro{rate-independent} system \eqref{eq:n2} admits  solutions $q \in H^1_0(0,T;\hon)$. 
\end{theorem}

\begin{proof}\ro{In order to be able to apply Theorem \ref{ex} for our  concrete setting, we  need to verify the assertions in Assumptions \ref{assu:ex} and \ref{assu:r}. 
\\(i) \textit{Assumption \ref{assu:exd} implies Assumption \ref{assu:ex}}. Let $\e>0$ be arbitrary but fixed. We employ a truncation  argument  inspired from the proof of \cite[Prop.\,3.1]{kms}. For $M>0$ fixed we introduce the ODE in  Hilbert space 
\begin{equation}\label{eq:m}
 \dot  z(t)   =\frac{1}{\e} \V^{-1}_{\hon} (\mathbb{I}-P_{\partial_2 R(\HH(z)(t),0)})(-\UU'(P_M(z(t)))+\ell(t) ) \ \  \ae (0,T), \quad z(0)=0,
 \end{equation}where $\V_{\hon}$ is the Riesz isomorphism \ro{associated with }$\hon$. For each $\zeta \in \lo$, the operator $P_{\partial_2 \RR(\zeta, 0)}:\hoon \to \hoon $ has the same meaning as in Lemma \ref{lem:proj} and 
$P_{M}:\hon \to \hon$ denotes the projection on the closed ball $\clos{B_{\hon}(0,M)}.$ Thanks to Assumption \ref{assu:exd}, the mapping $\UU'\circ P_M:\hon \to \hoon$ is globally Lipschitzian, and thus, we can apply  \cite[Thm 8]{aos} to deduce that \eqref{eq:m} admits a unique solution $z_M \in H^{1}_0(0,T;\hon)$. Due to the growth condition in Assumption \ref{assu:exd} and by applying classical arguments involving Gronwall's lemma (see the proof of \cite[Thm 8]{aos}), one can show that 
\[\|z_M\|_{C([0,T];\hon)}\leq C \quad \forall\, M>0,\]where $C>0$ is independent of $M$ and depends only on the given data. Going back to \eqref{eq:m}, where we set  $M:=C+1$, then implies that \begin{equation}
 \dot  z(t)   =\frac{1}{\e} \V^{-1}_{\hon} (\mathbb{I}-P_{\partial_2 R(\HH(z)(t),0)})(-\UU'(z(t))+\ell(t) ) \ \  \ae (0,T), \quad z(0)=0
 \end{equation}admits a unique solution $z \in H^{1}(0,T;\hon)$. The desired assertion follows from Lemma \ref{lem:proj} and \eqref{deriv}.}
\\(ii) \textit{Assumption \ref{assu:rd} implies Assumption \ref{assu:r}.}
Let \begin{equation}\label{ym}
y_m \weakly y \quad \text{ in }H_0^1(0,T;\hon), 
\end{equation}where $\{y_m\}  \subset  \YY:=\{y \in H_0^1(0,T;\hon):\dot y (t) \in \CC \text{ f.a.a.\,}t \in (0,T)\}$. 
 {As a result of \eqref{ym},  {the compact embedding $\hon \embed \embed \lo$ and since $B(t,\cdot):\lo \to \lo$ is continuous}, cf.\,Assumption \ref{assu:c}.\ref{it:c2}, we have
\[B(t-s,y_m(s))-B(t-s, y(s)) \to 0 \quad \text{ in }\lo,\quad \forall\, s,t \in [0,T] \text{ with }s \leq t\]as $m \to \infty.$
Moreover, as $B(t,\cdot):\hon \to \lo$ is Lipschitzian, it holds
\[\|B(t-s,y_m(s))-B(t-s, y(s))\|_{\lo} \leq b(t-s)\|y_m(s)-y(s)\|_{\hon}, \]
for all $m\in \N$ and all $s,t \in [0,T]$ with $s \leq t$. On account of $b\in L^2(0,T),$ see Assumption \ref{assu:c}.\ref{it:c2}, and \eqref{ym} combined with the embedding $H_0^1(0,T;\hon) \embed L^\infty(0,T;\hon)$, one can employ Lebesgue's dominated convergence theorem, which gives in turn
\begin{equation}\begin{aligned}
&\int_0^T \|\HH(y_m)(t) - \HH(y) (t)\|^2_{\lo} \,dt 
\\&\qquad \leq \int_0^T  {\int_0^t \|B(t-s,y_m(s))-B(t-s, y(s))\|^2_{\lo}} \,ds\,dt
\ \to 0 \quad \text{as }m \to \infty.
\end{aligned}\end{equation}}
By 
Assumption \ref{assu:c}.\ref{it:c1}, we further deduce
 \begin{equation}\label{eq:k_l}
 \kappa(\HH(y_m)) \to \kappa(\HH(y)) \quad \text{in }L^2(0,T;\lo) \quad \text{as }m \to \infty,
 \end{equation}
 which together with \eqref{ym} yields 
 \begin{equation}\label{eq:conc0}\begin{aligned}
\int_0^T \int_\O \kappa(\HH(y_m)(t))\dot y_m (t)\,dx \,  d t \to \int_0^T \int_\O \kappa(\HH(y)(t))\dot y (t)\,dx  \, d t \quad \text{as }m \to \infty. \end{aligned}\end{equation}Since $\{y_m\}  \subset  \YY$ and $y \in \YY$, cf.\,Remark \ref{rem:rr},  \eqref{eq:conc0}
is equivalent to
 \begin{equation}\begin{aligned}
\lim_{m \to \infty}  \int_0^T R(\HH(y_m)(t),\dot y_m(t))  \,dt  = \int_0^T R(\HH(y)(t),\dot y(t))  \,dt ,\end{aligned}\end{equation}cf.\,\eqref{def:r}. Thus, the convergence in Assumption \ref{assu:r}.\ref{m1} is true. \ro{To check  Assumption \ref{assu:r}.\ref{m2}, let $v\in L^2(0,T;\CC)$ be arbitrary but fixed and 
denote by $\{v_m\}$ the associated recovery sequence from Assumption \ref{assu:rd}. Then, \eqref{eq:k_l}, \eqref{def:r} and Assumption \ref{assu:rd} yield the assertion in Assumption \ref{assu:r}.\ref{m2}. Hence, our present setting satisfies the entire Assumption \ref{assu:r}.} Now, applying Theorem \ref{ex} gives the desired result.
\end{proof}


Since Assumptions \ref{assu:exd} and \ref{assu:rd} imply that Assumptions \ref{assu:ex} and  \ref{assu:r} are satisfied in the present framework, see the proof of Theorem \ref{ex1}, we can apply Theorem \ref{ex_opt} to obtain existence of optimal controls for minimization problems such as \eqref{eq:min}, which are governed by \eqref{eq:n2}. By setting $Z:=\lo \embed \embed \hon$, we arrive at  the following 

\begin{theorem}\label{ex_opt2}
Let $j:H^1_0(0,T;\hon) \to \R$ be a weakly lower semicontinuous mapping, bounded from below. \ro{Under Assumptions \ref{assu:exd} and \ref{assu:rd},} the optimization problem \begin{equation}\label{eq:min2}
 \left.
 \begin{aligned}
  \min \quad & j(q)+\frac{1}{2}\|\ell\|^2_{H^1(0,T;\lo)}\\
  \text{s.t.} \quad & (\ell,q) \in {H^1(0,T;\lo)}\times {H_0^1(0,T;\hon)},
  \\\quad & q \text{ solves }
\eqref{eq:n2} \text{ with right hand side\ }\ell \ro{\in \mathfrak{L}(y_0)}
 \end{aligned}
 \quad \right\}
\end{equation}admits at least one solution.
\end{theorem}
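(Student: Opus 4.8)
The plan is to verify that Theorem~\ref{ex_opt2} is a direct instance of the general existence-of-optimizers result, Theorem~\ref{ex_opt}. For this, I would first recall from \eqref{set} that the choices $X:=\lo$, $Y:=\hon$, $A:=\alpha\mathbb{I}$, $\RR:=R$ place \eqref{eq:n2} inside the abstract framework of Assumption~\ref{assu} (with $\dom\RR=\CC$), and that the proof of Theorem~\ref{ex1} has already established that Assumption~\ref{assu:r} holds for the dissipation potential $R$ from \eqref{def:r} (lower semicontinuity with the constant recovery sequence $\{v\}\subset\YY$). Hence the hypotheses of Theorem~\ref{ex_opt} concerning the state equation and the dissipation functional are met verbatim in the present setting.

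Next I would check the remaining structural requirement of Theorem~\ref{ex_opt}, namely that $Z$ is a real reflexive Banach space with $Z\embed\embed Y^\ast$. Taking $Z:=\lo$ and $Y:=\hon$, reflexivity of $\lo$ is clear, and the embedding $\lo\embed\embed\hoon$ is compact because $\hon\embed\embed\lo$ (Rellich--Kondrachov on the bounded Lipschitz domain $\O\subset\R^N$, $N\in\{2,3\}$) passes to the adjoint, giving $\lo=(\lo)^\ast\embed\embed(\hon)^\ast=\hoon$. I would also note that $j:H^1_0(0,T;\hon)\to\R$ is assumed weakly lower semicontinuous and bounded from below, exactly as required of the objective in \eqref{eq:min}, and that the regularization term $\tfrac12\|\ell\|^2_{H^1(0,T;\lo)}$ is the instance of $\tfrac12\|\ell\|^2_{H^1(0,T;Z)}$ corresponding to $Z=\lo$.

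With all hypotheses of Theorem~\ref{ex_opt} verified for the concrete data $X,Y,A,\RR,Z$ and objective $j$, the minimization problem \eqref{eq:min2} coincides with \eqref{eq:min}, so Theorem~\ref{ex_opt} applies and yields the existence of at least one global minimizer. This completes the proof.

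I do not anticipate a genuine obstacle here: the statement is essentially a specialization of an already-proved abstract theorem, and the only things to nail down are the compact embedding $\lo\embed\embed\hoon$ (routine, by duality of Rellich--Kondrachov) and the bookkeeping identification of the abstract spaces and functionals with their concrete counterparts via \eqref{set}. If anything deserves a line of care, it is making explicit that $\mathfrak{L}$ in this setting is the set \eqref{def:l} so that $H^1_0(0,T;\lo)\subset H^1(0,T;\hoon)\cap\mathfrak{L}$ and the admissible set is nonempty, but this has in effect already been recorded in the discussion preceding Theorem~\ref{ex1}.
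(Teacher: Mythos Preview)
Your proposal is correct and matches the paper's own treatment: the paper does not give a separate proof environment for Theorem~\ref{ex_opt2} but simply observes (in the lines immediately preceding it) that Assumption~\ref{assu:r} was verified in the proof of Theorem~\ref{ex1} and that setting $Z:=\lo$ puts \eqref{eq:min2} into the framework of Theorem~\ref{ex_opt}. Your added care in spelling out the compact embedding $\lo\embed\embed\hoon$ via duality of Rellich--Kondrachov and the inclusion $H^1_0(0,T;\lo)\subset\mathfrak{L}$ (from $\ell(0)=0$ and $\kappa\geq 0$, cf.~\eqref{def:l}) is a welcome clarification of points the paper leaves implicit.
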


\subsection{Lipschitz continuity of the solution map to \eqref{eq:n2}}\label{u}
This section is dedicated to the discussion of uniqueness of solutions for \ro{history-dependent} \ro{rate-independent} systems of the type \eqref{eq:n2}. To ensure the viability of the existence result from Theorem \ref{ex1}, \ro{Assumptions \ref{assu:c},  \ref{assu:exd}  and \ref{assu:rd}} are tacitly assumed in all what follows, without mentioning them every time.


We underline that Proposition \ref{lem:ess} below is the key to the sensitivity analysis in terms of Lipschitz estimates for the solution map \ro{associated with }\eqref{eq:n2}, see Theorem \ref{thm:lip} below. Proposition \ref{lem:ess} not only  allows us to 
prove the unique solvability of \eqref{eq:n2}, but it is also the essential ingredient \ro{for} showing uniform
Lipschitz continuity of the \textit{viscous} solution map, cf.\,Proposition \ref{lem:lip_c} below. These two results will be crucial for the derivation of optimality conditions for the control of \eqref{eq:n2} in a follow-up paper; see Remark \ref{rem:fw} below. 

For the sake of a better overview in the proof of Proposition \ref{lem:ess}, we will work from now on with the  \textit{history operator} 
$\HH:L^1(0,T;H^1(\O)) \to W^{1,1}(0,T;\hon)$ defined as \begin{equation}\label{h_op}
 [0,T] \ni t \mapsto \HH(y)(t):=\int_0^t  y(s) \,ds +y_0 \in \hon,\end{equation}
where  $y_0\in \hon.$ The general case from Assumption \ref{assu:c}.\ref{it:c2} can be considered as well, provided that $\HH$ is linear, see Remark \ref{rem:h} below.

For our upcoming analysis, we need to impose a {smoothness} assumption on the mapping acting on the first argument of $R$.
{
\begin{assumption}\label{assu:k}The mapping $\kappa$ is differentiable with $\kappa' \in W^{1,1}(\R).$ Moreover, there exist constants $c,c_0>0$ so that 
\[|\kappa''(y)| \leq c|y|^{\varrho}+c_0 \quad \forall\,y \in \R,\]where  $\varrho$ is a fixed value that satisfies $\varrho\in [1,\infty)$ for $N=2$ and $\varrho\in [1,3]$ for $N=3$.
\end{assumption}}
{Assumption \ref{assu:k} combined with the embedding $L^\infty(0,T;\hon) \embed L^{2\varrho}((0,T)\times \O)$ implies that there exist constants $C,C_0>0$, dependent only on the given data, so that 
\begin{equation}\label{k''}
\|\kappa''(y)\|_{L^2((0,T) \times \O)} \leq C\|y\|^\varrho_{L^\infty((0,T);\hon)} +C_0 \quad \forall\,y \in L^\infty(0,T;\hon).\end{equation}This will be employed in the proof of Proposition \ref{lem:ess} below, see \eqref{eq:k}. Note that Assumption \ref{assu:k} is also used for the proof of \eqref{ff}, in the case where the argument of $\kappa''$ is a not necessarily measurable mapping.}

\ro{For convenience of the reader, let us recall here that the fixed parameter $\alpha>0$ was introduced in Assumption \ref{assu:c}. We are now in the position to prove the following}
\begin{proposition}[Essential estimate]\label{lem:ess}
Let Assumption \ref{assu:k} be satisfied. For given $M>0$ and 
 $q_1,q_2 \in H^1(0,T;\hon)$ with $\|q_i\|_{{H^1(0,T;{\hon})}} \leq M, i=1,2,$ it holds
 \begin{equation*}\begin{aligned}
 \int_0^t-&\dual{\F(q_2)(s)-\F(q_1)(s)}{\dot q_2(s)-\dot q_1(s)}_{\hon}\,ds
 \\& \leq  \frac{\alpha}{4}\|(q_2- q_1)(t)\|_{\lo}^2+c\,\int_0^t \| (q_2- q_1)(s)\|_{L^6(\O)}^2\,ds \quad  \forall\, t \in [0,T],\end{aligned}\end{equation*}  \end{proposition}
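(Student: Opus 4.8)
The plan is to expand the left-hand side using the explicit form of $\F=(\kappa\circ\HH)$ with the simplified history operator \eqref{h_op}, namely $\HH(q_i)(s)=\int_0^s q_i(\tau)\,d\tau+y_0$. Writing $z:=q_2-q_1$ and $w_i:=\HH(q_i)$, the integrand becomes
\[
-\dual{\kappa(w_2(s))-\kappa(w_1(s))}{\dot q_2(s)-\dot q_1(s)}_{\hon}
=-\int_\Omega\bigl(\kappa(w_2(s))-\kappa(w_1(s))\bigr)\dot z(s)\,dx,
\]
since $\dot q_2-\dot q_1=\dot z$ and (crucially) $\dot w_i=q_i$, so $\dot w_2-\dot w_1=z$. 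First I would integrate by parts \emph{in time} the factor $\dot z(s)$ against $\kappa(w_2(s))-\kappa(w_1(s))$ over $[0,t]$. This is the step that the introduction flags as the heart of the argument (``derivation by parts formulae''): the linearity of $R$ in its second argument is exactly what lets us move the time-derivative off $\dot q$ and onto the $\kappa$-term. The boundary term at $s=t$ is $-\int_\Omega(\kappa(w_2(t))-\kappa(w_1(t)))z(t)\,dx$; at $s=0$ it vanishes because $z(0)=q_2(0)-q_1(0)=0$. The remaining integral term is $+\int_0^t\int_\Omega \partial_s\bigl(\kappa(w_2(s))-\kappa(w_1(s))\bigr)z(s)\,dx\,ds$, and here $\partial_s\kappa(w_i(s))=\kappa'(w_i(s))\dot w_i(s)=\kappa'(w_i(s))q_i(s)$.

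Next I would estimate the two resulting pieces. For the boundary term, write $\kappa(w_2(t))-\kappa(w_1(t))=\bigl(\int_0^1\kappa'(w_1(t)+\theta(w_2(t)-w_1(t)))\,d\theta\bigr)(w_2(t)-w_1(t))$, and note $w_2(t)-w_1(t)=\int_0^t z(s)\,ds$. Using $\kappa'\in W^{1,2}(\R)\subset L^\infty(\R)$ (one-dimensional Sobolev embedding — this is where Assumption \ref{assu:k} enters) we bound this by $C\|w_2(t)-w_1(t)\|_{\lo}\|z(t)\|_{\lo}\le C\bigl(\int_0^t\|z(s)\|_{\lo}\,ds\bigr)\|z(t)\|_{\lo}$, then apply Young's inequality to absorb $\tfrac{\alpha}{4}\|z(t)\|_{\lo}^2$ and leave $c\int_0^t\|z(s)\|_{\lo}^2\,ds$ (after a further Cauchy–Schwarz on the time integral, bounded by $L^2(0,t)$-type terms, which are dominated by the $L^6$ norms since $\Omega$ is bounded). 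For the volume term, group it as
\[
\int_0^t\!\!\int_\Omega\!\bigl(\kappa'(w_2(s))-\kappa'(w_1(s))\bigr)q_2(s)\,z(s)\,dx\,ds
+\int_0^t\!\!\int_\Omega\!\kappa'(w_1(s))z(s)\,z(s)\,dx\,ds,
\]
having written $\kappa'(w_2)q_2-\kappa'(w_1)q_1=(\kappa'(w_2)-\kappa'(w_1))q_2+\kappa'(w_1)(q_2-q_1)$. The second summand is bounded by $\|\kappa'\|_{L^\infty}\int_0^t\|z(s)\|_{\lo}^2\,ds$. For the first, use $|\kappa'(w_2)-\kappa'(w_1)|\le\|\kappa''\|_{L^2}^{?}$... more precisely $\kappa'\in W^{1,2}(\R)$ gives $\kappa'$ Lipschitz only if $\kappa''\in L^\infty$, which we do \emph{not} have; instead I would use $|\kappa'(w_2)-\kappa'(w_1)|\le \bigl(\int_0^1|\kappa''|(\cdots)d\theta\bigr)|w_2-w_1|$ and the fact that $\kappa''\in L^2(\R)$ lets us control $\kappa''$ composed along the segment in an $L^\infty_t L^{?}_x$ norm via a change of variables — alternatively, and more cleanly, bound $|\kappa'(w_2(s))-\kappa'(w_1(s))|$ pointwise by $2\|\kappa'\|_{L^\infty}$ and separately note it is small; then apply Hölder with exponents $(3,6,2)$ or $(\infty,6,\tfrac65)$ in space: $\|\kappa'(w_2)-\kappa'(w_1)\|_{L^3}\|q_2\|_{L^6}\|z\|_{L^2}$, with $\|\kappa'(w_2)-\kappa'(w_1)\|_{L^3}\le C\|w_2-w_1\|_{L^6}\le C\int_0^s\|z(\tau)\|_{L^6}\,d\tau$, and $\|q_2(s)\|_{L^6}\le M$ after integrating in $s$ (using $q_2\in H^1(0,T;L^6)\subset C([0,T];L^6)$ boundedly by $M$). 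Collecting everything and using $\|z\|_{\lo}\le C\|z\|_{L^6(\Omega)}$ (bounded domain) yields the claimed inequality with $c=c(\alpha,M,\|\kappa'\|_{W^{1,2}},T)$.

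The main obstacle I anticipate is handling the term $(\kappa'(w_2)-\kappa'(w_1))q_2 z$ with only $\kappa'\in W^{1,2}(\R)$ rather than $\kappa''\in L^\infty$: one must trade the missing $L^\infty$ bound on $\kappa''$ against the extra integrability $q_2\in L^6$ provides, choosing Hölder exponents so that the $H^1(0,T;L^6)$ bound $M$ on $q_1,q_2$ exactly closes the estimate, and so that all the time-integrated terms appear as $\int_0^t\|z(s)\|_{L^6}^2\,ds$ (ready for Gronwall downstream) plus the single absorbable boundary term $\tfrac{\alpha}{4}\|z(t)\|_{\lo}^2$. A secondary technical point is justifying the integration by parts in time, which requires $s\mapsto\kappa(w_i(s))$ to be in $H^1(0,T;\lo)$; this follows since $w_i\in W^{1,\infty}(0,T;\hon)$, $\kappa$ is Lipschitz, and $\partial_s\kappa(w_i(s))=\kappa'(w_i(s))q_i(s)\in L^2(0,T;\lo)$ by the $L^\infty$ bound on $\kappa'$ and $q_i\in L^2(0,T;\lo)$.
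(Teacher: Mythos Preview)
Your overall strategy---integrate by parts in time on the full difference $\kappa(w_2)-\kappa(w_1)$ and then split $\partial_s\bigl(\kappa(w_2)-\kappa(w_1)\bigr)=\kappa'(w_2)q_2-\kappa'(w_1)q_1$---is a legitimate reorganization of the paper's argument. The paper instead first Taylor-expands $\kappa(w_2)-\kappa(w_1)=\kappa'(w_1)(w_2-w_1)+g$, applies a product-rule/Fubini manipulation to the linear part, and estimates the quadratic remainder $g$ directly against $\dot z\in L^2(0,T;L^6)$ using the $M$-bound. Both routes ultimately hinge on the same kind of $\kappa''$-estimate. Your execution, however, has a genuine gap at exactly this point.

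The claimed bound $\|\kappa'(w_2)-\kappa'(w_1)\|_{L^3(\Omega)}\le C\|w_2-w_1\|_{L^6(\Omega)}$ is \emph{false} under Assumption~\ref{assu:k}. The one-dimensional embedding $W^{1,2}(\R)\hookrightarrow C^{0,1/2}(\R)$ only gives
\[
\|\kappa'(w_2)-\kappa'(w_1)\|_{L^3(\Omega)}\le C\,\bigl\|\,|w_2-w_1|^{1/2}\bigr\|_{L^3(\Omega)}=C\,\|w_2-w_1\|_{L^{3/2}(\Omega)}^{1/2},
\]
so that $\int_0^t\int_\Omega(\kappa'(w_2)-\kappa'(w_1))\,q_2\,z\,dx\,ds$ becomes homogeneous of degree $3/2$ in $z=q_2-q_1$ and therefore \emph{cannot} be dominated by the degree-$2$ right-hand side of the proposition for small $z$. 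You identify the difficulty (``$\kappa''\in L^\infty$, which we do \emph{not} have'') and even mention the correct device---write $\kappa'(w_2)-\kappa'(w_1)=\bigl(\int_0^1\kappa''(w_1+\theta(w_2-w_1))\,d\theta\bigr)(w_2-w_1)$ and place the $\kappa''$-factor in $L^2$---but then abandon it in favour of the unjustified $L^3$--$L^6$ inequality. The paper carries out precisely this $\kappa''$-in-$L^2$ idea: in its terms $e_2$ and $E_2$, the factor $\kappa''(\cdot)$ is placed in $L^2((0,T)\times\Omega)$ and paired via H\"older (splits of type $(6,\tfrac32,6)$ in space) with $(w_2-w_1)$, $q_i$ and $z$ in $L^6(\Omega)$, using $\|q_i\|_{L^\infty(0,T;L^6)}\le cM$. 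Your decomposition can be completed the same way, but as written it is not.

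A minor correction: the boundary term at $s=0$ vanishes not because $z(0)=0$ (the proposition does not assume $q_i(0)=0$) but because $w_1(0)=w_2(0)=y_0$, whence $\kappa(w_2(0))-\kappa(w_1(0))=0$.
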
 where $c>0$ is  a constant dependent only on {$L_\kappa,\alpha,T,\varrho $ and $M$}.

 \begin{proof}
 Throughout the proof $t \in [0,T]$ is arbitrary, but  fixed. We  abbreviate
\[E(s):=-\dual{\F(q_2)(s)-\F(q_1)(s)}{\dot q_2(s)-\dot q_1(s)}_{\hon}, \quad s\in[0,t].\]
We begin by observing that 
\[\F(q_2)-\F(q_1)=\kappa'(\HH(q_1))(\HH(q_2)-\HH(q_1))+g\quad \ae (0,T) \times \O,\]
where we abbreviate 
 \begin{equation}\label{def:g}\begin{aligned}
 g:={\kappa''(\HH(q_\gamma))(\HH(q_2)-\HH(q_1))^2}
  \end{aligned}\end{equation}
 and 
$\HH(q_\gamma)(s,x):=\gamma(s,x)(\HH(q_2)-\HH(q_1))(s,x)+\HH(q_1)(s,x),$ $\gamma(s,x)\in [0,1], (s,x) \in (0,T) \times \O.$


Thus,
\begin{equation}\label{E}\begin{aligned}
\int_0^t E(s) \,ds
&=-\int_0^t \int_\O (\F(q_2)-\F(q_1))(s,x)\partial_s (q_2- q_1)(s,x) \,dx\,ds
\\&=-\int_0^t \int_\O \kappa'(\HH(q_1))(\HH(q_2)-\HH(q_1))(s,x)\partial_s (q_2- q_1)(s,x)\,dx \,ds
\\&\quad -\int_0^t \int_\O  g(s,x)\partial_s (q_2- q_1)(s,x)\,dx \,ds
\\& =:-E_1-E_2.
\end{aligned}\end{equation}
Note that,  we denote by $\partial_s (q_2- q_1) \in L^2((0,T)\times \O)$  the partial weak derivative with respect to\ time of $(q_2- q_1)\in H^1((0,T)\times \O)$, cf.\,\cite[Lem.\,10.1,\,p.\,137]{friedman}.
In light of \eqref{h_op}, we can rewrite 
\begin{equation}\begin{aligned}\label{e11}
E_1=
\int_0^t \int_\O \Big( \int_0^s (q_2-q_1)(\zeta,x)\,d \zeta\Big) \kappa'(\HH(q_1)(s,x)) \partial_s (q_2- q_1)(s,x)\,dx \,ds.
\end{aligned}\end{equation}
Since $q_1 \in H^1(0,T;\hon)$ by assumption, $\HH(q_1) \in H^2(0,T;\hon) \subset  H^1((0,T)\times \O)$, as a consequence of \eqref{h_op} and \cite[Lem.\,10.1,\,p.\,137]{friedman}. According to \cite{mm_serrin}, the mapping $\kappa'(\HH(q_1))$ belongs to $W^{1,1}((0,T)\times \O)$, in view  of {\eqref{k''}}; note that this is true \ro{even though} $\kappa'$ is not Lipschitz continuous, since $\kappa'$ is defined on an one-dimensional space, cf.\,the introduction in \cite{mm_serrin} and \cite[p.\,202]{mm}.
 Hence, the following integration by parts formula is true 
\begin{equation*}\begin{aligned}
&\kappa'(\HH(q_1))(s,x) \partial_s (q_2- q_1)(s,x)
\\& =\partial_s[\kappa'(\HH(q_1))  (q_2- q_1)](s,x)-\partial_s \kappa'(\HH(q_1))(s,x)  (q_2- q_1)(s,x) \quad \ae (0,T) \times \O.
\end{aligned}\end{equation*}
Introducing the above  identity in \eqref{e11} yields
\begin{equation}\label{E1}\begin{aligned}
E_1&=
\int_0^t \int_\O \Big( \int_0^s (q_2-q_1)(\zeta,x)\,d \zeta\Big) \partial_s[\kappa'(\HH(q_1))  (q_2- q_1)](s,x)\,dx \,ds
\\&\quad -\int_0^t \int_\O \Big( \int_0^s (q_2-q_1)(\zeta,x)\,d \zeta\Big) \partial_s \kappa'(\HH(q_1))(s,x)  (q_2- q_1)(s,x)\,dx \,ds
\\\quad &:=e_1+e_2.
\end{aligned}\end{equation}
By employing Fubini's theorem one obtains
\begin{equation*}\begin{aligned}
e_1&=
 \int_0^t \int_\O \Big( \int_s^t \partial_s[\kappa'(\HH(q_1))  (q_2- q_1)](\zeta,x)\,d \zeta \Big)  (q_2- q_1)(s,x)\,dx \,ds
\\ \quad &= \int_0^t \int_\O [\kappa'(\HH(q_1))  (q_2- q_1)(t,x)-\kappa'(\HH(q_1))  (q_2- q_1)(s,x)]  (q_2- q_1)(s,x)\,dx \,ds.
\end{aligned}\end{equation*}
In light of H\"older's inequality, this leads to 
\[|e_1| \leq \int_0^t L_\kappa\,  (\|(q_2- q_1)(t)\|_{\lo}\| (q_2- q_1)(s)\|_{\lo} +\| (q_2- q_1)(s)\|_{\lo}^2)\,ds,\] where we also relied on the Lipschitz continuity of $\kappa$, cf.\,Assumption \ref{assu:c}.\ref{it:c1}.
To get a final estimate for $e_1$ we apply 
 Young's inequality, which gives in turn
\begin{equation}\label{e1}
|e_1| \leq \frac{\alpha}{4}\|(q_2- q_1)(t)\|_{\lo}^2+c(L_\kappa,T,\alpha)\,\int_0^t \| (q_2- q_1)(s)\|_{\lo}^2\,ds,
\end{equation}where $c(L_\kappa,T,\alpha)>0$ does not depend on any parameters other than  $L_\kappa,T$ and $\alpha$.
 
 To obtain a suitable estimate for $e_2$, we first observe that 
\[\partial_s \kappa'(\HH(q_1))(s,x)=\kappa''(\HH(q_1)(s,x))(q_1(s,x)) \quad \ae (0,T) \times \O,\] in view of \eqref{h_op} and chain rule for composition of Sobolev functions, see e.g.\,\cite{mm_serrin}. 
%
Since {$\|q_1\|_{L^\infty(0,T;\hon)} \leq M$, by assumption,} one has 
\begin{equation}\label{eq:k}
\| \partial_s \kappa'(\HH(q_1))\|_{L^2(0,T;L^{3/2}(\O))} \leq \|\kappa''(\HH(q_1))\|_{L^2((0,T)\times \O)}\,c\,M \leq {C(M,\varrho,T)},\end{equation}where we relied again on {\eqref{k''}}.
  Going back to $e_2$, we now see, by applying H\"older's inequality, that 
 \begin{equation*}\begin{aligned}
| e_2|&=\Big|\int_0^t \int_\O \Big( \int_0^s (q_2-q_1)(\zeta,x)\,d \zeta\Big) \partial_s \kappa'(\HH(q_1))(s,x)  (q_2- q_1)(s,x)\,dx \,ds\Big|
\\ \quad &\leq \int_0^t \Big\| \int_0^s (q_2-q_1)(\zeta)\,d \zeta\Big\|_{L^6(\O)}\| \partial_s \kappa'(\HH(q_1))(s)\|_{L^{3/2}(\O)}  \|(q_2- q_1)(s)\|_{L^6(\O)} \,ds
\\ \quad &\leq  \int_0^t \|(q_2-q_1)(s)\|_{L^6(\O)}\,d s \int_0^t \| \partial_s \kappa'(\HH(q_1))(s)\|_{L^{3/2}(\O)}  \|(q_2- q_1)(s)\|_{L^6(\O)} \,ds
\\ \quad &\leq  \| \partial_s \kappa'(\HH(q_1))\|_{L^2(0,T;L^{3/2}(\O))} \|q_2-q_1\|_{L^2(0,t;L^6(\O))}\|q_2-q_1\|_{L^1(0,t;L^6(\O))}.
\end{aligned}\end{equation*}
Thus,
\begin{equation}\label{e2}
|e_2 | \leq {c( M,\varrho,T)}\int_0^t   \|(q_2- q_1)(s)\|^2_{L^6(\O)} \,ds,
\end{equation}on account of \eqref{eq:k}; note that ${c( M,\varrho,T)}>0$ depends only on {$M,\varrho$ and $T$}.

Finally, we need to estimate the second term appearing in \eqref{E}. To do so, we recall the definition of $g$ from \eqref{def:g}:
  \begin{equation*}\begin{aligned}
g ={\kappa''(\HH(q_\gamma))(\HH(q_2)-\HH(q_1))^2},
\end{aligned}\end{equation*}where $\HH(q_\gamma)(s,x)=\gamma(s,x)(\HH(q_2)-\HH(q_1))(s,x)+\HH(q_1)(s,x),$ $\gamma(s,x)\in [0,1], (s,x) \in (0,T) \times \O.$
Since $g$ is a measurable function, the same is true for the mapping\[f:(s,x) \mapsto \raisebox{3pt}{$\chi$}_{\HH(q_2) \neq \HH(q_1)} (s,x){\kappa''(\HH(q_\gamma)(s,x))}.\] 
{Thanks to Assumption \ref{assu:k}, it holds
\begin{align*}
|\kappa''(\HH(q_\gamma)(s,x))| &\leq c|\HH(q_\gamma)(s,x)|^{\varrho}+c_0 
\\& \leq c(\varrho)(|\HH(q_2)-\HH(q_1))(s,x)|^\varrho+|\HH(q_1)(s,x)|^\varrho)+c_0 ,\end{align*}whence
\begin{equation}\label{ff}
\|f\|^2_{L^2((0,T) \times \O)}\leq C(\varrho,T)\,\max\{\|\HH(q_1)\|^{2\varrho}_{L^{2\varrho}((0,T) \times \O)},\|\HH(q_2)\|^{2\varrho}_{L^{2\varrho}((0,T) \times \O)}\}+c_0 \end{equation}follows.
As  $\|q_i\|_{L^\infty(0,T;\hon)} \leq M, i=1,2$, by assumption, and since the embedding $L^\infty(0,T;\hon)\embed L^{2\varrho}((0,T) \times \O)$ is true, we deduce that
\[\|f\|_{L^2((0,T) \times \O)}\leq C(\varrho,T,M) .\]
Further, we  estimate as follows
\begin{equation}\label{g}\begin{aligned}
\|g\|_{L^2(0,t;L^{\frac{6}{5}}(\O))} &\leq \|f\|_{L^2((0,T) \times \O)}\| \HH(q_2)-\HH(q_1)\|^2_{L^\infty(0,t;L^6(\O))}
\\&\leq 
C(\varrho,T,M) \| \HH(q_2)-\HH(q_1)\|^2_{L^\infty(0,t;L^6(\O))}. \end{aligned} \end{equation}} Thus, by employing again H\"older's inequality, we have
\begin{equation*}\begin{aligned}
|E_2|&\leq \int_0^t \Big|\int_\O g(s,x)\partial_s (q_2- q_1)(s,x)\,dx\Big| \,ds 
\\\quad &\leq \|g\|_{L^2(0,t;L^{\frac{6}{5}}(\O))} \|\dot q_2-\dot q_1\|_{L^2(0,t;L^{6}(\O))} 
\\\quad &\leq { C(\varrho,T,M)} \| \HH(q_2)-\HH(q_1)\|^2_{L^\infty(0,t;L^6(\O))}  \|\dot q_2-\dot q_1\|_{L^2(0,T;L^{6}(\O))},
\end{aligned}\end{equation*}in view of \eqref{g}.
Since  $\|q_i\|_{H^1(0,T;L^{6}(\O))} \leq M, i=1,2,$ by assumption,  the above estimate can be continued as 
\begin{equation}\label{E2}\begin{aligned}
|E_2|&\leq 
 {C(\varrho,T,M)} \,\Big(\int_0^t \| (q_2- q_1)(s)\|_{L^{6}(\O)}\,ds \Big)^2
   \\\quad &\leq {C(\varrho,T,M)} \,\int_0^t \| (q_2- q_1)(s)\|_{L^{6}(\O)}^2 \,ds,
   \end{aligned}\end{equation}
where we relied on  the definition of $\HH,$ cf.\,\eqref{h_op}.

   Summarizing, we obtained from \eqref{E}, 
   \eqref{E1}, \eqref{e1}, \eqref{e2} and \eqref{E2}, that 
   \begin{equation*}\begin{aligned}
\int_0^t E(s) \,ds
&\leq |e_1|+|e_2|+|E_2|
\\\quad &\leq  \frac{\alpha}{4}\|(q_2- q_1)(t)\|_{\lo}^2+c(L_\kappa, T, \alpha)\int_0^t \| (q_2- q_1)(s)\|_{\lo}^2\,ds
\\ &\quad
+C(\varrho,T,M)\,\int_0^t \| (q_2- q_1)(s)\|_{L^{6}(\O)}^2 \,ds 
\\\quad &\leq  \frac{\alpha}{4}\|(q_2- q_1)(t)\|_{\lo}^2+c\,\int_0^t \| (q_2- q_1)(s)\|_{L^6(\O)}^2\,ds \quad \forall\, t \in [0,T],
\end{aligned}\end{equation*}where $c>0$ is  a constant dependent only on $L_\kappa,\alpha,T$ {and $M$}. The proof is now complete.

\end{proof}

\ro{One important consequence of the estimate from Proposition \ref{lem:ess} is a Lipschitz condition for the (for now) multivalued solution operator of \eqref{eq:n2}, which gives in turn the uniqueness. To be able to prove this, we need to impose a local Lipschitz condition on the second derivative of $\UU$. This type of requirement is also used in \cite[App.\,B]{kms}, see \cite[Eq.\,(5.2)]{kms}, to deduce uniqueness of solutions in the uniformly convex (finite-dimensional) case. 
From now on we assume that
\begin{assumption}\label{u''}
For each $M>0$, there exists $L(M)>0$ so that
\[\|\UU''(y_1)- \UU''(y_2)\|_{\LL(\hon,\hoon)}\leq L(M)\|y_1-y_2\|_{\hon}\]for  $y_i \in \hon$ with $\|y_i\|\leq M, i=1,2.$\end{assumption} }

  \begin{theorem}[Lipschitz stability, Uniqueness]\label{thm:lip}
  \ro{Suppose that Assumptions \ref{assu:exd}, \ref{assu:rd}, \ref{assu:k} and \ref{u''} are satisfied.} 
Let  $\ell_1,\ell_2 \in H^1(0,T;\hoon) \cap \ro{\mathfrak{L}(y_0)}$ be given such that   $\|\ell_i\|_{H^1(0,T;\hoon)} \leq \widehat M, i=1,2,$ for some fixed $\widehat M>0$. Then 
 \[\|q_1-q_2\|_{C([0,T];\hon)} \leq c(\widehat M)\,\|\ell_1-\ell_2\|_{\ro{W^{1,1}}(0,T;\hoon)},\]where $q_i \in H_0^1(0,T;\hon)$ is a solution to \eqref{eq:n2} with right hand side $\ell_i, i=1,2,$ and $c(\widehat M)>0$ is  a constant dependent only on $\widehat M$ and the given fixed parameters. Thus, the \ro{history-dependent} evolution \eqref{eq:n2} admits a unique solution in $H_0^1(0,T;\hon)$. 
 \end{theorem}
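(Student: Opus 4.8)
The plan is to compare two solutions $q_1,q_2$ corresponding to loads $\ell_1,\ell_2$ directly in the spirit of standard monotonicity arguments for rate-independent systems, using the linear structure of $R$ in its second argument (which makes the characterization \eqref{eq:polar1} genuinely useful). First I would test the variational inequality \eqref{eq:polar1} for $q_1$ with $v=\dot q_2(t)\in\CC$ and, symmetrically, the one for $q_2$ with $v=\dot q_1(t)\in\CC$; this is legitimate precisely because $\dom R=\CC$ is a cone and both rates lie in it a.e. Adding the two inequalities, the dissipation terms combine into $-\dual{\F(q_1)(t)-\F(q_2)(t)}{\dot q_1(t)-\dot q_2(t)}_{\hon}$ on the right (this is exactly the quantity $E(s)$ estimated in Proposition~\ref{lem:ess}), while the left side produces $\dual{-\partial_q\EE(t,q_1(t))+\partial_q\EE(t,q_2(t))}{\dot q_1(t)-\dot q_2(t)}_{\hon}$. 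Using \eqref{def:e1}, the energy part is $\alpha(q_1(t)-q_2(t),\dot q_1(t)-\dot q_2(t))_{\hon}-\dual{\ell_1(t)-\ell_2(t)}{\dot q_1(t)-\dot q_2(t)}_{\hon}=\frac{\alpha}{2}\frac{d}{dt}\|q_1(t)-q_2(t)\|_{\hon}^2-\dual{\ell_1(t)-\ell_2(t)}{\dot q_1(t)-\dot q_2(t)}_{\hon}$.

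Next I would integrate this identity/inequality over $(0,t)$. The term $\frac{\alpha}{2}\|(q_1-q_2)(t)\|_{\hon}^2$ appears with a good sign on the left (the initial term vanishes since $q_i(0)=0$). For the load term I integrate by parts in time: $\int_0^t\dual{\ell_1-\ell_2}{\dot q_1-\dot q_2}_{\hon}\,ds=\dual{(\ell_1-\ell_2)(t)}{(q_1-q_2)(t)}_{\hon}-\int_0^t\dual{(\dot\ell_1-\dot\ell_2)(s)}{(q_1-q_2)(s)}_{\hon}\,ds$, which is controlled by $\|\ell_1-\ell_2\|_{W^{1,1}(0,T;\hoon)}\,\|q_1-q_2\|_{C([0,T];\hon)}$ up to Young's inequality (absorbing a small multiple of $\|(q_1-q_2)(t)\|_{\hon}^2$). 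The remaining right-hand side is $\int_0^t E(s)\,ds$, and here I invoke Proposition~\ref{lem:ess}: since $\ell_i$ are bounded by $\widehat M$ in $H^1(0,T;\hoon)$, Lemma~\ref{lem:est_ris} gives a uniform bound $\|q_i\|_{\hhyy}\le c\widehat M$, hence (via $\hon\embed L^6(\O)$ in dimension $N\le 3$) a bound $\|q_i\|_{H^1(0,T;L^6(\O))}\le M=M(\widehat M)$, so Proposition~\ref{lem:ess} applies and yields $\int_0^t E(s)\,ds\le\frac{\alpha}{4}\|(q_2-q_1)(t)\|_{\lo}^2+c\int_0^t\|(q_2-q_1)(s)\|_{L^6(\O)}^2\,ds$.

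Collecting everything, after absorbing the $\frac{\alpha}{4}\|(q_1-q_2)(t)\|_{\lo}^2$ term (it is dominated by $\frac{\alpha}{2}\|(q_1-q_2)(t)\|_{\hon}^2$ from the left) and a small piece of the load term, I arrive at
\[
\tfrac{\alpha}{4}\|(q_1-q_2)(t)\|_{\hon}^2\le c(\widehat M)\,\|\ell_1-\ell_2\|_{W^{1,1}(0,T;\hoon)}^2+c\int_0^t\|(q_1-q_2)(s)\|_{\hon}^2\,ds
\]
for all $t\in[0,T]$, where I used $\|\cdot\|_{L^6(\O)}\le c\|\cdot\|_{\hon}$. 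Gronwall's inequality (applied to $t\mapsto\|(q_1-q_2)(t)\|_{\hon}^2$) then gives $\|q_1-q_2\|_{C([0,T];\hon)}^2\le c(\widehat M)\,\|\ell_1-\ell_2\|_{W^{1,1}(0,T;\hoon)}^2$, which is the claimed Lipschitz estimate. Uniqueness follows immediately by taking $\ell_1=\ell_2$.

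The main obstacle is the step that Proposition~\ref{lem:ess} already handles for us, namely controlling $\int_0^t E(s)\,ds$: the term $\dual{\F(q_1)-\F(q_2)}{\dot q_1-\dot q_2}_{\hon}$ contains a time derivative of $q_1-q_2$ that cannot be bounded by $\|q_1-q_2\|_{\hon}$ naively, and it is exactly the history (Volterra) structure of $\HH$ together with the smoothness of $\kappa$ (Assumption~\ref{assu:k}) that lets one integrate by parts and trade the derivative for a boundary term plus lower-order integrals — this is why the essential estimate is formulated and proved separately. The remaining work in the present proof is then routine: choosing admissible test directions, one integration by parts in $t$ for the load, Young's inequality to absorb critical terms, and a final application of Gronwall. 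A minor point to check carefully is that $\dot q_i(t)\in\CC$ a.e. (so the test is admissible) and that the pairing manipulations are justified by $q_i\in\hhyy$, but both are immediate from the definitions and the regularity already established.
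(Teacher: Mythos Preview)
Your proposal is correct and follows essentially the same route as the paper's proof: cross-testing the variational inequalities \eqref{eq:polar1} with $\dot q_2$ and $\dot q_1$, integrating in time, integrating the load term by parts, invoking Lemma~\ref{lem:est_ris} for the uniform $H^1(0,T;L^6(\O))$ bound, applying Proposition~\ref{lem:ess}, absorbing the $\frac{\alpha}{4}$ term, and finishing with Gronwall. The only (cosmetic) difference is in the final handling of the load contribution: the paper keeps the product $\hat c\,\|q_1-q_2\|_{C([0,T];\hon)}\|\ell_1-\ell_2\|_{W^{1,1}(0,T;\hoon)}$ intact through Gronwall, then takes the supremum and \emph{divides} by $\|q_1-q_2\|_{C([0,T];\hon)}$, whereas you split it by Young's inequality beforehand. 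Note, however, that your parenthetical ``absorbing a small multiple of $\|(q_1-q_2)(t)\|_{\hon}^2$'' is not quite right: Young on that product yields a small multiple of $\|q_1-q_2\|_{C([0,T];\hon)}^2$, not of the pointwise value, so your displayed pre-Gronwall inequality should carry an extra $\epsilon\,\|q_1-q_2\|_{C([0,T];\hon)}^2$; this is harmless, since after Gronwall and taking the supremum you can absorb it for $\epsilon$ small enough, or you can simply adopt the paper's divide-at-the-end trick instead.
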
 
\begin{remark}\label{opt_lip}
It is not to be expected that the Lipschitz estimate in Theorem \ref{thm:lip} holds true in better spaces, cf.\,\cite[Lem.\,3.2]{sww}, where a \ro{rate-independent} system \ro{with quadratic energy and } without a history variable is considered (\ro{state-independent} dissipation potential). As opposed to \cite{sww}, we require that the right-hand sides $\ell$ in \eqref{eq:n2} are uniformly bounded. We do so,  in order to be able to apply the essential estimate in Proposition \ref{lem:ess} concerning the mapping $\kappa$ that acts on the history term. Since an additional state dependence is not taken into account in \cite{sww}, a  boundedness condition for the loads is not needed there.
\end{remark}
\begin{proof}[Proof of Theorem \ref{thm:lip}]
Since $q_i$ solves \eqref{eq:n2} with right hand side\ $\ell_i, i=1,2$, we have 
\begin{equation}\label{eq:evi21}
0  \geq \dual{\ro{-\UU'( q_1(t))} +\ell_1(t)-\F(q_1)(t)}{\eta-\dot q_1(t)}_{\hon} \quad \forall\, \eta \in \CC, 
\end{equation}
\begin{equation}\label{eq:evi22}
0  \geq \dual{\ro{-\UU'( q_2(t))} +\ell_2(t)-\F(q_2)(t)}{\eta-\dot q_2(t)}_{\hon} \quad \forall\, \eta \in \CC
\end{equation}a.e.\ in $(0,T)$, see \eqref{def:e1} and \eqref{def:r}.
Testing \eqref{eq:evi21} and \eqref{eq:evi22} with $\dot q_2$ and $\dot q_1$, respectively, yields 
\begin{equation*}\begin{aligned}
0 &\geq \ro{\dual{\UU'( q_2(t))-\UU'( q_1(t))}{\dot q_2(t)-\dot q_1(t)}_{\hon}}
+\dual{\ell_1(t)-\ell_2(t)}{\dot q_2(t)-\dot q_1(t)}_{\hon}
\\&+\dual{\F(q_2)(t)-\F(q_1)(t)}{\dot q_2(t)-\dot q_1(t)}_{\hon} \quad \ae (0,T).
\end{aligned}\end{equation*}
\ro{Thus, for all $t\in [0,T]$ it holds 
\begin{equation}\label{sigma_t}\begin{aligned}
 \int_0^t \tilde \sigma(s)\,ds
&  \leq -\int_0^t \dual{\dot \ell_1(s)-\dot \ell_2(s)}{ q_1(s)- q_2(s)}_{\hon} \,ds
\\&\quad +\dual{\ell_1(t)-\ell_2(t)}{ q_1(t)- q_2(t)}_{\hon}+\int_0^t E(s)\,ds,
 \end{aligned}\end{equation} where we abbreviate
 \[\tilde \sigma (s):= \ro{\dual{\UU'( q_2(s))-\UU'( q_1(s))}{\dot q_2(s)-\dot q_1(s)}_{\hon}} ,\]
 \[E(s):=-\dual{\F(q_2)(s)-\F(q_1)(s)}{\dot q_2(s)-\dot q_1(s)}_{\hon}, \quad s\in[0,t].\]
 From Lemma \ref{lem:est_ris} we infer that 
 \begin{equation}\label{b_q}
 {\|q_i\|_{H^1(0,T;\hon)} \leq c\,\|\ell_i\|_{H^1(0,T;\hoon)}+c_0\leq c\widehat M+c_0,\ i=1,2.}\end{equation} In light of Assumption \ref{assu:k}, this allows us to  apply 
  Proposition \ref{lem:ess} with \[M:=c\,\widehat M{+c_0}.\]Note that, according to Lemma \ref{lem:est_ris}, $c>0$ is independent of $\ell_i,\ i=1,2.$
  Hence, \eqref{sigma_t} can be continued as 
  \begin{equation}\label{sigma_tt}
  \begin{aligned}
   \int_0^t \tilde \sigma(s)\,ds &\leq 
 \|\dot \ell_1-\dot \ell_2\|_{L^1(0,T;\hoon)} \|q_2- q_1\|_{C([0,T];\hon)}
\\&\qquad +\| \ell_1-\ell_2\|_{C([0,T];\hoon)}\|q_2- q_1\|_{C([0,T];\hon)}
\\&\qquad +  \frac{\alpha}{4}\|(q_2- q_1)(t)\|_{\lo}^2
 \\&\qquad +c(L_\kappa,\alpha,T, \widehat M, {\varrho})\int_0^t \| (q_2- q_1)(s)\|_{L^6(\O)}^2\,ds 
 \\&\quad\leq C\| \ell_1-\ell_2\|_{W^{1,1}(0,T;\hoon)} \|q_2- q_1\|_{C([0,T];\hon)}
 \\&\qquad +  \frac{\alpha}{4}\|(q_2- q_1)(t)\|_{\lo}^2
 \\&\qquad +C(L_\kappa,\alpha,T, \widehat M, {\varrho},\O) \int_0^t \| (q_2- q_1)(s)\|_{\hon}^2\,ds \quad \forall\,t\in [0,T],
  \end{aligned}\end{equation}where we relied on the embedding $\hon \embed L^{6}(\O)$. Note that all the constants appearing in \eqref{sigma_tt} depend only on the given data.
 Further, we use an argument which  is inspired by the proofs of \cite[Thm.\,7.4]{mt2004}, see also \cite[App.\,B]{kms}. To this end, we define
 \[\sigma(t):=\dual{\UU'( q_2(t))-\UU'( q_1(t))}{ q_2(t)- q_1(t)}_{\hon}\]and observe that
   \[\sigma '(t)=\UU''( q_2(t))[\dot q_2(t),(q_2(t)- q_1(t))]-\UU''( q_1(t))[\dot q_1(t),( q_2(t)-q_1(t))]+\tilde \sigma (t)\]a.e.\,in $(0,T).$ 
  Rearranging the terms yields
    \begin{align*}\sigma '(t)
    =
    \UU''( q_2(t))[(q_2(t)- q_1(t)),\dot q_2(t)]+
   \dual{\UU'( q_1(t))-\UU'( q_2(t))}{\dot q_2(t)}_{\hon}
  \\  -\UU''( q_1(t))[( q_2(t)-q_1(t)),\dot q_1(t)]
    +   \dual{\UU'( q_2(t))-\UU'( q_1(t))}{\dot q_1(t)}_{\hon}
 +2\tilde \sigma(t),
    \end{align*}where we recall
     \[\tilde \sigma (t)= \ro{\dual{\UU'( q_2(t))-\UU'( q_1(t))}{\dot q_2(t)-\dot q_1(t)}_{\hon}} .\]
 Thanks to \eqref{b_q} combined with $H^1(0,T;\hon) \embed L^\infty(0,T;\hon)$, we can make use of the local Lipschitz continuity of $\UU''$ from Assumption \ref{u''}, which gives in turn
      \begin{equation}\label{sigma'}
      \sigma '(t)\leq  L(c,\widehat M,c_0,T)\|q_2(t)- q_1(t)\|^2_{\hon}(\|\dot q_2(t)\|_{\hon}+\|\dot q_1(t)\|_{\hon})
 +2\tilde \sigma(t)
\end{equation}
    a.e.\,in $(0,T).$ 
    Further, by  the uniform convexity of $\UU$ and \eqref{sigma_tt}, we have for all $t\in [0,T]$ the following 
        \begin{align*}
   \alpha \|q_1(t)-q_2(t)\|_{\hon}^2&\leq     \sigma(t) = \int_0^t \sigma'(s)\,ds
        \\&\leq 2C\| \ell_1-\ell_2\|_{W^{1,1}(0,T;\hoon)} \|q_2- q_1\|_{C([0,T];\hon)}
 \\&\qquad +  \frac{\alpha}{2}\|(q_2- q_1)(t)\|_{\lo}^2
 \\&\quad +
\int_0^t [2  C+L(\|\dot q_2(s)\|+\|\dot q_1(s)\|)] \| q_2(s)- q_1(s)\|_{\hon}^2\,ds .
    \end{align*}
    We subtract $\frac{\alpha}{2}\|q_2(t)- q_1(t)\|^2_{\hon}
$ from both sides and apply Gronwall's inequality, which results in
    \begin{align*}
    &\|q_1(t)-q_2(t)\|_{\hon}^2
    \\& \leq C\| \ell_1-\ell_2\|_{W^{1,1}(0,T;\hoon)} \|q_2- q_1\|_{C([0,T];\hon)}\exp(c+\int_0^t \|\dot q_2(s)\|+\|\dot q_1(s)\| \,ds )
    \\&\leq C\| \ell_1-\ell_2\|_{W^{1,1}(0,T;\hoon)} \|q_2- q_1\|_{C([0,T];\hon)} \quad \forall\,t\in[0,T],
    \end{align*}where for the last estimate we employed again \eqref{b_q}. Note that the constant $C>0$ depends only on $\widehat M$ and other given parameters. The proof is now complete.}
\end{proof}

We end  this subsection with some comments concerning the {smoothness} requirement in Assumption \ref{assu:k} as well as regarding the  structure of the setting in the present section and how it influences the unique solvability result.
\ro{\begin{remark}\label{rem:u}
We underline that the assumptions concerning $\UU$ in Theorem \ref{thm:lip} are indispensable when one works with general energies such as \eqref{def:e1}; note that Assumptions \ref{assu:exd}, \ref{assu:rd} and \ref{u''} are automatically fulfilled when $\UU$ is quadratic. While Assumptions \ref{assu:exd} and \ref{assu:rd} ensure the existence of solutions in Theorem \ref{ex1}, Assumption  \ref{u''} is a  smoothness requirement that guarantees uniqueness. This is only used to show \eqref{sigma'} and, instead of a local Lipschitz condition for the second derivative, some authors ask for $C^3$ smoothness of the energy to prove \eqref{sigma'} \cite[Thm.\,7.4]{mt2004}. Let us point out that, for the  existence result in \cite{mie_ros}, a condition that translates to weak-to-weak continuity of $\UU'$ is required, namely \cite[Eq.\,(4.4)]{mie_ros}. It can be easily checked that this implies  Assumption \ref{assu:rd}. Finally, we recall that Assumption \ref{assu:exd} is needed just for the existence of  solutions for the viscous model associated with \eqref{eq:n2}, i.e., \eqref{eq:q11}. We think that this requirement may be avoided partially, that is, one can dispense off the global growth condition in Assumption \ref{assu:exd}, if one does not work with the viscous regularization, and instead, performs a time discretization  and passes to the limit in the time-discrete energy \cite{mie_ros}. See  Remark \ref{rem:visc} for details. In Remark \ref{rem:fw} below we explain why we choose the vanishing viscosity approach in the present paper.
\end{remark}}
\begin{remark}[Comparison with other contributions regarding Assumption \ref{assu:k}]\label{rem:non}
We point out that, in order to obtain uniqueness of solutions, related works \cite{BS,TR,mie_ros} impose, in addition to a smoothness condition on the dissipation functional or on the Minkowski functional, bounds from below \cite[Eq.\,(5.9),(5.10)]{mie_ros} and/or smallness assumptions.  
 Moreover, the crucial bound \cite[Eq.\,(5.12)]{mie_ros} is imposed, which in our case reads as follows: $\exists \,\delta>0$ so that 
\[\alpha \|v\|^2_{\hon}+\int_\O \kappa'(\zeta)v^2 \geq \delta \|v\|^2_{\hon} \quad \forall\,\zeta, v \in \hon.\] Clearly, this is not necessarily fulfilled in our setting as $\kappa$ is non-increasing in applications, see Remark \ref{non-conv}. 

  \end{remark}
 {The next two remarks concern the proof of Proposition \ref{lem:ess}.}
\begin{remark}[General history operator]\label{rem:h}
Our choice  to work with the simple Volterra operator in \eqref{h_op} is entirely motivated by the complexity of the proof of Proposition \ref{lem:ess}.
If one wants the \textit{history operator} $\HH$ to preserve the structure from Assumption \ref{assu:c}.\ref{it:c2},  one needs to assume that $\kappa'\in W^{1,p}(\R)$ with some $p\geq 2$ and that  $B\in W^{1,2}(0,T; \KK(\hon,\lo) \cap \LL(\hon,L^\nu(\O)))$  with $\nu \geq 2$ satisfying
\begin{subequations}\begin{gather}
\frac{2}{\theta}+\frac{1}{p}+\frac{1}{\nu}\leq 1,\label{nu1}
\\\frac{2}{\nu}+\frac{1}{p}+\frac{1}{\theta}\leq 1,\label{nu2}
\end{gather}\end{subequations}where $\theta \geq 2$ is such that $\hon \embed L^\theta(\O);$  {note that  by $\KK(\hon,\lo)$ we denote the subspace of compact operators of $\LL(\hon,\lo)$.}
Then, a close inspection of the proof of Proposition \ref{lem:ess} shows that the crucial estimate still holds true, provided that the conditions \eqref{nu1}-\eqref{nu2} are fulfilled. 
In particular, this shows that the smoothness of the derivative of $\kappa$ depends on the regularity of the space where $B$ takes its values at each time point and on the dimension of the setting. However, to ensure that the arguments in the proof of Proposition \ref{lem:ess} are not difficult to follow,  we choose to stick with the history operator \eqref{h_op}. \end{remark}

\begin{remark}[Linearity of $R$ w.r.t.\,its second argument]\label{rem:lin}
To show the uniqueness result in Theorem  \ref{thm:lip}, the linearity of $\RR$ (on $\dom \RR$) with respect to its second argument was essential. 
\\
If this is not available, say $\RR=\widehat R$ from \eqref{def:t0}, then we can no longer argue as in the proof of \eqref{E}, since 
 \begin{equation*}\begin{aligned}
 \int_0^t E(s)\,ds = \int_0^t-\dual{\F(q_2)(s)-\F(q_1)(s)}{|\dot q_2(s)|-|\dot q_1(s)|}_{\hon}\,ds.\end{aligned}\end{equation*}
That is, we cannot make use of the derivation by parts arguments employed in the proof of Proposition \ref{lem:ess} to obtain suitable bounds for the expression $\int_0^t E(s)\,ds$, see\,\eqref{e11}.
\end{remark}

\subsection{Consequences of the uniqueness result and future work}\label{sec:c}
We end the paper by mentioning some fundamental  consequences of Proposition \ref{lem:ess}.

We point out that the uniqueness result from Theorem \ref{thm:lip} opens the door to future research concerning the optimal control of \eqref{eq:n2}, see \eqref{eq:min2}, as it allows us to define a single-valued control-to-state operator. 
By revisiting the results shown in the previous sections, we conclude some important properties of the solution map to \eqref{eq:n2}.
\begin{corollary}\label{cor}
 \ro{Suppose that  Assumptions \ref{assu:exd}, \ref{assu:rd}, \ref{assu:k} and \ref{u''} are satisfied.} Then, the (single-valued) solution operator \[\SS:H^{1}_0(0,T;\hoon)  \to H_0^{1}(0,T;\hon)\] \ro{associated with }\eqref{eq:n2} is weakly continuous from $H^{1}_0(0,T;\lo)$  to $H_0^{1}(0,T;\hon)$. Moreover, $\SS$ is  locally Lipschitz continuous in the sense of Theorem \ref{thm:lip}. 
\end{corollary}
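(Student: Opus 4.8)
The plan is to deduce Corollary \ref{cor} directly from results already established, without new estimates. The statement has two parts: weak continuity of $\SS$ and local Lipschitz continuity, the latter being precisely Theorem \ref{thm:lip}, so nothing remains to prove there. For the weak continuity, the idea is to mimic the limit passage carried out in the proof of Theorem \ref{ex_opt}, but now along a sequence of loads converging weakly in $H^1_0(0,T;\lo)$ rather than in $H^1_0(0,T;Z)$ with $Z=\lo$; these are in fact the same thing in the concrete setting of section \ref{u0}, since $\lo \embed\embed \hon$ plays the role of $Z$.

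First I would fix a sequence $\ell_n \weakly \ell$ in $H^1_0(0,T;\lo)$ and set $q_n := \SS(\ell_n)$, $q := \SS(\ell)$. Since weakly convergent sequences are bounded, $\{\ell_n\}$ is bounded in $H^1_0(0,T;\lo) \embed H^1(0,T;\hoon)\cap\mathfrak L$, so Lemma \ref{lem:est_ris} gives a uniform bound $\|q_n\|_{\hhyy}\le c\,\|\ell_n\|_{H^1(0,T;\hoon)}\le C$. Hence a (not relabelled) subsequence satisfies $q_n \weakly \tilde q$ in $H^1_0(0,T;\hon)$ for some $\tilde q$. Next I would show $\tilde q$ solves \eqref{eq:n2} with right-hand side $\ell$ by repeating verbatim the limit-passage argument from the proof of Theorem \ref{ex_opt}: using \eqref{eq:polar1} along a recovery sequence from Assumption \ref{assu:r}.\ref{m2} (which holds here by Theorem \ref{ex1}, where the constant recovery sequence works) one obtains \eqref{rstar3}, i.e.\ $\dual{-\partial_q\EE(t,\tilde q(t))}{\eta}_{\hon}\le R(\HH(\tilde q)(t),\eta)$ for all $\eta\in\CC$; and using \eqref{eq:p11} together with the compact embedding $H^1_0(0,T;\lo)\embed\embed L^2(0,T;\hoon)$ and the lower-semicontinuity argument from \eqref{for_l1} one obtains \eqref{rsta0}. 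Combining these yields that $\tilde q$ is a solution w.r.h.s.\ $\ell$, hence $\tilde q = \SS(\ell) = q$ by the uniqueness part of Theorem \ref{thm:lip}. Since every subsequence of $\{q_n\}$ has a further subsequence converging weakly to the same limit $q$, the whole sequence converges, $q_n\weakly q$ in $H^1_0(0,T;\hon)$, which is the claimed weak continuity.

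The local Lipschitz continuity "in the sense of Theorem \ref{thm:lip}" is simply a restatement of that theorem: for loads bounded by $\widehat M$ in $H^1(0,T;\hoon)$, $\|\SS(\ell_1)-\SS(\ell_2)\|_{C([0,T];\hon)}\le c(\widehat M)\,\|\ell_1-\ell_2\|_{W^{1,1}(0,T;\hoon)}$, and since the $H^1(0,T;\lo)$-norm dominates the $H^1(0,T;\hoon)$-norm via $\lo\embed\hoon$, the same estimate holds on bounded subsets of $H^1_0(0,T;\lo)$. I expect the only mildly delicate point to be verifying that the compact-embedding step $H^1_0(0,T;\lo)\embed\embed L^2(0,T;\hoon)$ is available — this is the Aubin–Lions lemma applied with $\lo\embed\embed\hoon$ (itself a consequence of $\hon\embed\embed\lo$ by duality) and $\lo\embed\hoon$ — but this is exactly the role that the assumption $Z\embed\embed Y^\ast$ played in section \ref{oc}, so nothing new is needed. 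No genuine obstacle remains; the proof is essentially an application of Theorem \ref{thm:lip} plus a recycling of the compactness argument from the proof of Theorem \ref{ex_opt}.

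\begin{proof}
The local Lipschitz continuity is an immediate consequence of Theorem \ref{thm:lip}, combined with the continuous embedding $\lo \embed \hoon$, which ensures $\|\ell\|_{H^1(0,T;\hoon)} \leq c\,\|\ell\|_{H^1(0,T;\lo)}$; thus, for right-hand sides bounded in $H^1_0(0,T;\lo)$, the estimate from Theorem \ref{thm:lip} applies.

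It remains to show the weak continuity. Let $\ell_n \weakly \ell$ in $H^1_0(0,T;\lo)$ and set $q_n:=\SS(\ell_n)$, $q:=\SS(\ell)$. The sequence $\{\ell_n\}$ is bounded in $H^1_0(0,T;\lo) \subset H^1(0,T;\hoon)\cap \mathfrak{L}$, so Lemma \ref{lem:est_ris} yields a constant $C>0$, independent of $n$, with $\|q_n\|_{\hhyy} \leq C$. Hence, along a (not relabelled) subsequence, $q_n \weakly \tilde q$ in $H^1_0(0,T;\hon)$ for some $\tilde q \in H^1_0(0,T;\hon)$. We argue as in the proof of Theorem \ref{ex_opt} to show that $\tilde q$ solves \eqref{eq:n2} w.r.h.s.\ $\ell$. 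Since $q_n$ solves \eqref{eq:n2} w.r.h.s.\ $\ell_n$, the sequence $\{q_n\}$ lies in $\YY$, and we may invoke Assumption \ref{assu:r}, which holds in the present setting by Theorem \ref{ex1}. Using \eqref{eq:polar1} along a recovery sequence from Assumption \ref{assu:r}.\ref{m2} and passing to the limit $n \to \infty$ (where one employs $\ell_n \weakly \ell$ and $q_n \weakly \tilde q$), one obtains, exactly as in the derivation of \eqref{rstar3},
\[
\dual{-\partial_q \EE(t,\tilde q(t))}{\eta}_{\hon} \leq R(\HH(\tilde q)(t),\eta) \quad \forall\, \eta \in \CC, \quad \ae (0,T).
\]
On the other hand, using \eqref{eq:p11}, the compact embedding $H^1_0(0,T;\lo) \embed\embed L^2(0,T;\hoon)$, the convergence $q_n \weakly \tilde q$, and the lower-semicontinuity arguments from the proof of \eqref{for_l1}, one arrives at
\[
\int_0^T \dual{-\partial_q \EE(t,\tilde q(t))}{\dot{\tilde q}(t)}_{\hon} \,dt \geq \int_0^T R(\HH(\tilde q)(t),\dot{\tilde q}(t)) \,dt.
\]
Combining the last two relations shows that $\tilde q$ is a solution to \eqref{eq:n2} w.r.h.s.\ $\ell$, so $\tilde q = \SS(\ell) = q$ by the uniqueness assertion of Theorem \ref{thm:lip}. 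As the limit is independent of the chosen subsequence, the entire sequence satisfies $q_n \weakly q$ in $H^1_0(0,T;\hon)$, which proves the weak continuity.
\end{proof}
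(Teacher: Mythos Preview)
Your proposal is correct and follows essentially the same approach as the paper: boundedness via Lemma \ref{lem:est_ris}, limit passage as in the proof of Theorem \ref{ex_opt}, and local Lipschitz continuity directly from Theorem \ref{thm:lip}. Your version is simply more explicit, in particular in invoking the uniqueness from Theorem \ref{thm:lip} to identify the weak limit and deduce convergence of the full sequence---a step the paper leaves implicit.
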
\begin{proof}
Note that $ H_0^1(0,T;\hoon) \subset  \ro{\mathfrak{L}(y_0)},$ since $\kappa \geq 0$ by Assumption  \ref{assu:c}.\ref{it:c1}; hence, $\SS$ is well-defined, in view of Theorem \ref{ex1}.
The weak continuity result follows from Lemma \ref{lem:est_ris} and by employing the same arguments as in the proof of Theorem \ref{ex_opt}. The local Lipschitz continuity is a direct consequence of  Theorem \ref{thm:lip}.
\end{proof}

The statements in Corollary \ref{cor} will be useful in a follow-up paper. Therein, our main interest will be to derive an optimality system for the control of \eqref{eq:n2} by performing a limit analysis $\e \searrow 0$ in the optimality conditions \cite{aos} \ro{associated with }the control of the viscous problem \eqref{eq:q1}.  {In the finite-dimensional framework with quadratic energy, this technique has  been recently employed in \cite{bp}.}

We note that, adapted to our present setting, the viscous differential inclusion \ro{associated with }\eqref{eq:n2} reads
\begin{equation}\label{eq:q11}
-\partial_q \EE(t,q(t)) \in \partial_2 R_\epsilon (\HH(q)(t),\dot{q}(t))\quad \text{in }\hoon  , \quad q(0) = 0,
  \end{equation}a.e.\ in $(0,T)$.
  The viscous dissipation potential   $R_\epsilon:\lo \times  \hon \rightarrow [0,\infty]$ is defined as \begin{equation}\label{def:r11}
R_\epsilon(\zeta,\eta):=R(\zeta,\eta)+\frac{\epsilon}{2}\|\eta\|^2_{\hon},
\end{equation}
where  $\epsilon>0$ is the viscosity parameter and $R$ is the \ro{1-homogeneous} dissipation functional from \eqref{def:r}.

The uniform Lipschitz continuity with respect to\ the viscosity parameter of the solution map to \eqref{eq:q11} plays in the context of optimal control an essential role, see Remark \ref{rem:fw} below. Thanks  to Proposition \ref{lem:ess}, we already have the necessary ingredients to provide this key result in the present paper. \ro{Recall  that Assumption \ref{assu:exd} provides sufficient conditions for the unique solvability of \eqref{eq:q11} (see the beginning of the proof of Theorem \ref{ex1}).}

 \begin{proposition}[Uniform Lipschitz continuity of the viscous solution map]\label{lem:lip_c}
  \ro{Suppose that Assumptions \ref{assu:exd}, \ref{assu:k} and \ref{u''}} 
 hold true. 
Let  $\e>0$ be fixed and let $\ell_1,\ell_2 \in H^1(0,T;\hoon) \cap \ro{\mathfrak{L}(y_0)}$ be given such that   \[\|\ell_i\|_{H^1(0,T;\hoon)} \leq \widehat M, \quad i=1,2\] for some fixed $\widehat M>0$. Then
\begin{equation}\label{visc}
\|\SS_\e(\ell_1)-\SS_\e(\ell_2)\|_{C([0,T];\hon)} \leq c(\widehat M)\,\|\ell_1-\ell_2\|_{\ro{W^{1,1}}(0,T;\hoon)}, 
\end{equation}
where $c(\widehat M)>0$ is independent of $\e$
and  \[\SS_\e:L^2(0,T;\hoon) \to H_0^1(0,T;\hon)\] denotes the solution operator to the viscous problem \eqref{eq:q11}, cf.\,Lemma \ref{lem:proj}.\end{proposition}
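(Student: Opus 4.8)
The plan is to mimic the proof of Theorem \ref{thm:lip} almost verbatim, replacing the non-viscous variational inequalities \eqref{eq:evi21}--\eqref{eq:evi22} by their viscous counterparts and carefully tracking that the extra viscosity terms carry the \emph{correct sign}, so that they can simply be dropped. Concretely, set $q_i:=\SS_\e(\ell_i)\in H_0^1(0,T;\hon)$, $i=1,2$. By Lemma \ref{prop:equiv} applied in the present setting (cf.\,\eqref{set} and \eqref{def:r11}), each $q_i$ satisfies, a.e.\ in $(0,T)$,
\begin{equation*}
0 \geq \dual{-\alpha(\mathbb{I}-\Delta)q_i(t)+\ell_i(t)-\F(q_i)(t)-\e(\mathbb{I}-\Delta)\dot q_i(t)}{\eta-\dot q_i(t)}_{\hon}\quad\forall\,\eta\in\CC,
\end{equation*}
which is exactly \eqref{eq:polar} rephrased as a variational inequality over the cone $\CC=\dom R$. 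Testing the inequality for $q_1$ with $\eta:=\dot q_2(t)$ and the one for $q_2$ with $\eta:=\dot q_1(t)$, adding, and rearranging gives, a.e.\ in $(0,T)$,
\begin{equation*}
\frac{\alpha}{2}\frac{d}{dt}\|(q_1-q_2)(t)\|_{\hon}^2+\frac{\e}{2}\frac{d}{dt}\|(\dot q_1-\dot q_2)(t)\|_{\hon}^2 \leq \dual{\ell_1(t)-\ell_2(t)}{\dot q_1(t)-\dot q_2(t)}_{\hon}+E(t),
\end{equation*}
with $E(t):=-\dual{\F(q_2)(t)-\F(q_1)(t)}{\dot q_2(t)-\dot q_1(t)}_{\hon}$ as in Theorem \ref{thm:lip}; here the crucial point is that testing produces the term $\dual{\e(\mathbb{I}-\Delta)(\dot q_1-\dot q_2)(t)}{\dot q_1(t)-\dot q_2(t)}$ with a \emph{favourable} sign, namely it equals $\e\|(\dot q_1-\dot q_2)(t)\|_{\hon}^2\geq 0$ and thus only helps. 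Wait --- more carefully, when one tests the $q_1$-inequality with $\dot q_2$ one gets $+\e\dual{(\mathbb{I}-\Delta)\dot q_1}{\dot q_2-\dot q_1}$ and symmetrically $+\e\dual{(\mathbb{I}-\Delta)\dot q_2}{\dot q_1-\dot q_2}$, whose sum is $-\e\|(\dot q_1-\dot q_2)(t)\|_{\hon}^2\leq 0$, so after moving it to the left one obtains the displayed inequality with $\tfrac{\e}{2}\tfrac{d}{dt}\|(\dot q_1-\dot q_2)(t)\|_{\hon}^2$ replaced by $\e\|(\dot q_1-\dot q_2)(t)\|_{\hon}^2$ --- in either reading the viscous contribution is nonnegative on the left and can be discarded.

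Next I would integrate over $[0,t]$. Since $q_i(0)=0$ and (by the compatibility condition $\ell_i\in\mathfrak{L}$, exactly as in the proof of Proposition \ref{lem:est}) $\dot q_i(0)=0$, the viscous boundary terms at $t=0$ vanish and the one at time $t$ is nonnegative; dropping it yields
\begin{equation*}
\frac{\alpha}{2}\|(q_1-q_2)(t)\|_{\hon}^2 \leq \int_0^t\dual{\ell_1(s)-\ell_2(s)}{\dot q_1(s)-\dot q_2(s)}_{\hon}\,ds+\int_0^t E(s)\,ds.
\end{equation*}
From here the argument is literally that of Theorem \ref{thm:lip}: integrate the load term by parts in time to bound it by $\hat c\,\|q_1-q_2\|_{C([0,T];\hon)}\|\ell_1-\ell_2\|_{W^{1,1}(0,T;\hoon)}$ with $\hat c$ depending only on $T$; invoke the uniform bound $\|q_i\|_{H^1(0,T;L^6(\O))}\leq c\,\|\ell_i\|_{H^1(0,T;\hoon)}\leq c\,\widehat M=:M$, which for the viscous solutions follows from Proposition \ref{lem:est} together with $\hon\embed L^6(\O)$ and is uniform in $\e$; apply Proposition \ref{lem:ess} with this $M$ to get $\int_0^t E(s)\,ds\leq\frac{\alpha}{4}\|(q_1-q_2)(t)\|_{\lo}^2+c\int_0^t\|(q_1-q_2)(s)\|_{L^6(\O)}^2\,ds$; absorb $\frac{\alpha}{4}\|(q_1-q_2)(t)\|_{\hon}^2$ into the left side; and finish with Gronwall's inequality followed by taking the supremum over $t$ and dividing by $\|q_1-q_2\|_{C([0,T];\hon)}$. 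The resulting constant depends only on $\widehat M$ and the fixed parameters, and in particular \emph{not} on $\e$, which is the whole point of the statement.

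The only genuinely delicate point is making sure the passage to Proposition \ref{lem:ess} is legitimate uniformly in $\e$, i.e.\ that the a priori bound $\|q_i^\e\|_{H^1(0,T;L^6(\O))}\leq M$ holds with $M$ independent of $\e$ --- but this is precisely Proposition \ref{lem:est} combined with the embedding $H_0^1(0,T;\hon)\hookrightarrow H^1(0,T;L^6(\O))$, so no new work is needed. Everything else is a sign-bookkeeping exercise on the viscous terms. Hence the main (very mild) obstacle is purely notational: one must be careful that the extra term $\tfrac{\e}{2}\|\dot q_\e\|^2$-type quantities always land on the good side of the inequality, which they do thanks to the monotonicity of the Riesz map $\mathbb{I}-\Delta$ and the vanishing initial rate $\dot q_\e(0)=0$ guaranteed by $\ell\in\mathfrak{L}$. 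I would therefore present the proof compactly, writing ``arguing exactly as in the proof of Theorem \ref{thm:lip}, this time using the viscous inequality \eqref{eq:polar} and dropping the nonnegative viscous terms,'' and only spell out the single line showing where the viscous contributions enter with the right sign.
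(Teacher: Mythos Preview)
Your proposal is correct and follows essentially the same route as the paper's own proof: test the two viscous variational inequalities against each other, observe that the viscous cross-term yields $+\e\|(\dot q_1-\dot q_2)(t)\|_{\hon}^2$ on the left (your self-correction is the right version; it is \emph{not} a time derivative, so the discussion of $\dot q_i(0)=0$ and viscous boundary terms is unnecessary), integrate, use Proposition~\ref{lem:est} for the $\e$-uniform $H^1(0,T;L^6(\O))$ bound, apply Proposition~\ref{lem:ess}, and finish with Gronwall exactly as in Theorem~\ref{thm:lip}. Apart from the superfluous hedge about initial rates, your write-up matches the paper's argument line by line.
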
 

   \begin{proof}
   Throughout the proof, we abbreviate for simplicity $q_i:=\SS_\e(\ell_i), i=1,2.$ According to Lemma \ref{prop:equiv} (applied to the setting from Assumption \ref{assu:c}, see also \eqref{set}), one has 
\begin{equation}\label{eq:evi1}
\e( \dot q_1(t),\eta-\dot q_1(t))_{\hon}  \geq \dual{\ro{-\UU'(q_1(t))} +\ell_1(t)-\F(q_1)(t)}{\eta-\dot q_1(t)}_{\hon} \  \forall\, \eta \in \CC, 
\end{equation}
\begin{equation}\label{eq:evi11}
\e( \dot q_2(t),\eta-\dot q_2(t))_{\hon}  \geq \dual{\ro{-\UU'(q_2(t))} +\ell_2(t)-\F(q_2)(t)}{\eta-\dot q_2(t)}_{\hon} \ \forall\, \eta \in \CC,
\end{equation}\ae $(0,T)$.
Testing \eqref{eq:evi1} with $\dot q_2$ and \eqref{eq:evi11} with $\dot q_1$ and adding the resulting VIs yields 
\begin{equation*}\begin{aligned}
&\e( \dot q_1(t)-\dot q_2(t),\dot q_2(t)-\dot q_1(t))_{\hon}
\\&\geq \ro{\dual{\UU'( q_2(t))-\UU'( q_1(t))}{\dot q_2(t)-\dot q_1(t)}_{\hon}}
-\dual{\ell_1(t)-\ell_2(t)}{\dot q_1(t)-\dot q_2(t)}_{\hon}
\\&\quad+\dual{\F(q_2)(t)-\F(q_1)(t)}{\dot q_2(t)-\dot q_1(t)}_{\hon} \quad \ae (0,T).
\end{aligned}\end{equation*}\ro{Thus, for all $t\in [0,T]$ it holds 
\begin{equation}\label{sigma_t0}\begin{aligned}
 \int_0^t \tilde \sigma(s)\,ds
&  \leq -\int_0^t \dual{\dot \ell_1(s)-\dot \ell_2(s)}{ q_1(s)- q_2(s)}_{\hon} \,ds
\\&\quad +\dual{\ell_1(t)-\ell_2(t)}{ q_1(t)- q_2(t)}_{\hon}+\int_0^t E(s)\,ds,
 \end{aligned}\end{equation} where we abbreviate
 \[\tilde \sigma (s):= \ro{\dual{\UU'( q_2(s))-\UU'( q_1(s))}{\dot q_2(s)-\dot q_1(s)}_{\hon}} ,\]
 \[E(s):=-\dual{\F(q_2)(s)-\F(q_1)(s)}{\dot q_2(s)-\dot q_1(s)}_{\hon}, \quad s\in[0,t].\]
From Proposition \ref{lem:est} we infer that $\|q_i\|_{H^1(0,T;{\hon})} \leq C\,\|\ell_i\|_{H^1(0,T;\hoon)}{+c_0},i=1,2$, where $C,c_0>0$ are independent of $\e$. In light of Assumption \ref{assu:k}, this allows us to  apply 
  Proposition \ref{lem:ess} with \[M:=C\,\widehat M{+c_0}.\] Note that $M>0$ is independent of $\e$. By arguing as in the proof of \eqref{sigma_tt}, we see that \eqref{sigma_t0} can be continued as
  \begin{equation}\label{sigma_tt0}
  \begin{aligned}
   \int_0^t \tilde \sigma(s)\,ds &\leq 
 C\| \ell_1-\ell_2\|_{W^{1,1}(0,T;\hoon)} \|q_2- q_1\|_{C([0,T];\hon)}
 \\&\qquad +  \frac{\alpha}{4}\|(q_2- q_1)(t)\|_{\lo}^2
 \\&\qquad +C(L_\kappa,\alpha,T, \widehat M, {\varrho},\O) \int_0^t \| (q_2- q_1)(s)\|_{\hon}^2\,ds  \quad \forall\,t\in [0,T],
  \end{aligned}\end{equation} where all the constants appearing in \eqref{sigma_tt0} depend only on the given data.
Then, arguing in the exact same manner as in the proof of Theorem \ref{thm:lip} gives the desired assertion.
}
 \end{proof}
 \begin{remark}
In  \cite[Lem.\,4.5]{sww}, an  estimate similar to \eqref{visc} is provided for the smoothened viscous solution map. Note that, as opposed to Proposition \ref{lem:lip_c}, the result in \cite[Lem.\,4.5]{sww} does not require  uniform boundedness and  additional regularity of the controls. This is due to the fact that the dissipation functional  in \cite{sww} is \ro{state-independent} (a history term is not taken into consideration).
\end{remark}
 \begin{remark}\label{rem:fw}
The estimate in Proposition \ref{lem:lip_c} is fundamental for the derivation of optimality conditions for the control of \eqref{eq:min2}. This result has been recently employed in {\cite{bp}}, where an optimality system for the \ro{finite-dimensional} version of \eqref{eq:min2} is established. Therein,  the \ro{finite-dimensional} counterpart of Proposition \ref{lem:lip_c} led to  suitable uniform estimates with respect to\ $\e$ for the (viscous) multipliers appearing in the optimality conditions \ro{associated with }the control of \eqref{eq:q11} (see \cite[proof of Prop.\,3.6]{bp}). This allowed us to  perform a vanishing viscosity analysis  and obtain a strong limit optimality system. 

In a follow-up paper, our aim is  to extend these findings  to the \ro{infinite-dimensional} case. \ro{Let us stress that the strongest kind of optimality conditions for the control of the viscous regularization  \eqref{eq:q11} have already been developed in \cite[Sec.\,4.1]{aos}. This shows that the "viscous" optimality conditions are a good point of departure when it comes to obtaining a satisfying optimality system for the control of \eqref{eq:n1}.}
\end{remark}

\begin{appendix}
\section{Fenchel conjugate functionals}\label{a}
In this section, we compute the conjugate functionals of some of the mappings mentioned in Remark \ref{rem:r}. 

Since, for a given $\zeta \in X$,  $\RR(\zeta,\cdot)$ is proper, convex and lower semicontinuous, cf.\,Assumption \ref{it:st1}.\ref{it:st11}, we have the equivalence 
\[y^\ast \in \partial_2 \RR(\zeta,y)  \quad  \text{in }Y^\ast\Leftrightarrow y \in \partial [\RR(\zeta,\cdot)]^\star (y^\ast ) \quad \text{in }Y.\] By making use thereof, we see that the respective assertions in Remark \ref{rem:r} are true.
\begin{lemma}\label{conj0}
Let $g:\lo \to \lo$ be a given mapping. 
For $\zeta \in L^2(\O)$, the Fenchel conjugate of the functional $\widehat R(\zeta,\cdot): H^1(\O) \to \R$ defined as
\[\widehat R(\zeta,\eta)=\int_{\O} g(\zeta) | \eta|\,dx,\] cf.\,\eqref{def:t0}, is given by the mapping  $[\widehat R(\zeta,\cdot)]^\star:H^1(\O)^\ast \to [0,\infty]$,  
\[[\widehat R(\zeta,\cdot)]^\star(\omega)=\II_{K(\zeta)}(\omega) \quad \forall\,\omega \in H^1(\O)^\ast,\]
where
\begin{equation*}\begin{aligned}
K(\zeta): =\{\phi \in L^2(\O):|\phi|\leq g (\zeta)\  \ae \O\}.\end{aligned}\end{equation*}
\end{lemma}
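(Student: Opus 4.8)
The plan is to compute the Fenchel conjugate $[\widehat R(\zeta,\cdot)]^\star$ directly from the definition and show it equals the indicator of $K(\zeta)$. Fix $\zeta \in \lo$ and write $G := g(\zeta) \in \lo$, $G \geq 0$ (recall $g$ maps into $[0,\infty)$ in the relevant applications, but even without sign information the argument below goes through with $|G|$). By definition,
\begin{equation*}
[\widehat R(\zeta,\cdot)]^\star(\omega) = \sup_{\eta \in \hon} \Big( \dual{\omega}{\eta}_{\hon} - \int_\O G\,|\eta|\,dx \Big), \qquad \omega \in \hoon.
\end{equation*}
First I would show that if $\omega \in K(\zeta)$, i.e.\ $\omega \in \lo$ with $|\omega| \leq G$ a.e., then the supremum is $\leq 0$: indeed $\dual{\omega}{\eta}_{\hon} = \int_\O \omega\,\eta\,dx \leq \int_\O |\omega|\,|\eta|\,dx \leq \int_\O G\,|\eta|\,dx$ for every $\eta \in \hon$, so each term in the supremum is nonpositive; taking $\eta = 0$ shows the supremum equals exactly $0$. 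Hence $[\widehat R(\zeta,\cdot)]^\star(\omega) = 0 = \II_{K(\zeta)}(\omega)$ on $K(\zeta)$.

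Next I would treat $\omega \notin K(\zeta)$ and show the supremum is $+\infty$. Two subcases: (i) $\omega \in \hoon \setminus \lo$, and (ii) $\omega \in \lo$ but $|\omega| \leq G$ fails on a set of positive measure. For (ii), let $A := \{x \in \O : |\omega(x)| > G(x)\}$, which has positive measure; choosing $\eta_n := n\,\operatorname{sign}(\omega)\,\chi_{A}$ — after noting such $\eta_n$ can be approximated in a way that keeps it in $\hon$, or more cleanly by picking a smooth cutoff supported in a subset of $A$ of positive measure — gives $\dual{\omega}{\eta_n}_{\hon} - \int_\O G\,|\eta_n| = n \int_A (|\omega| - G)\,dx \to +\infty$. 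A technical point: test functions in $\hon$ cannot be arbitrary characteristic functions, so I would instead use the density of, say, $C_c^\infty$-type bump functions: pick $\psi \in \hon$, $\psi \geq 0$, supported where $|\omega| - G$ has a definite sign and is bounded below, and scale. For case (i), since $\omega \in \hoon \setminus \lo$, there is a sequence $\eta_n \in \hon$ with $\|\eta_n\|_{\lo} \leq 1$ and $\dual{\omega}{\eta_n}_{\hon} \to \infty$ (the functional $\omega$ is unbounded on the $\lo$-unit ball intersected with $\hon$, as $\hon$ is $\lo$-dense); then $\int_\O G\,|\eta_n| \leq \|G\|_{\lo}\|\eta_n\|_{\lo} \leq \|G\|_{\lo}$ stays bounded, so the supremand $\to \infty$.

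The main obstacle I expect is the approximation issue in case (ii): one must exhibit admissible test functions in $\hon$ (not merely in $\lo$) that realize the blow-up, which requires either a density/mollification argument or a careful choice of smooth bumps adapted to the set $A$. Once that is handled, combining the three cases gives $[\widehat R(\zeta,\cdot)]^\star = \II_{K(\zeta)}$, as claimed. Finally, I would invoke the equivalence $y^\ast \in \partial_2 \widehat R(\zeta,y) \Leftrightarrow y \in \partial[\widehat R(\zeta,\cdot)]^\star(y^\ast)$ stated at the opening of the appendix, together with $\partial \II_{K(\zeta)} = \NN_{K(\zeta)}$, to conclude the normal-cone characterization used in Remark~\ref{rem:r} for \eqref{hds}.
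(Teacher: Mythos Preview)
Your proof is correct, but the paper's argument is considerably shorter by exploiting two structural facts you do not use. First, since $\widehat R(\zeta,\cdot)$ is positively homogeneous, the conjugate automatically takes only the values $0$ and $+\infty$: if a single test function $\hat v\in\hon$ gives $\dual{\omega}{\hat v}_{\hon}-\int_\O g(\zeta)|\hat v|>0$, scaling $\hat v\mapsto\gamma\hat v$ sends the supremum to $+\infty$; otherwise the supremum is $0$. This replaces your explicit constructions in cases (i) and (ii) by a one-line dichotomy. Second, the paper identifies the zero set $\{\omega:\dual{\omega}{v}_{\hon}\le\int_\O g(\zeta)|v|\text{ for all }v\in\hon\}$ with $K(\zeta)$ via Hahn--Banach: the inequality (applied to $\pm v$) gives $|\dual{\omega}{v}_{\hon}|\le\|g(\zeta)\|_{\lo}\|v\|_{\lo}$, so $\omega$ extends to an element of $\lo$; then density of $\hon$ in $\lo$ upgrades the inequality to all $v\in\lo$, from which $|\omega|\le g(\zeta)$ a.e.\ follows by testing with suitable $L^2$ functions. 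This completely bypasses the approximation issue you correctly flag in case~(ii). Your route has the virtue of being self-contained and constructive, while the paper's buys brevity and avoids any mollification or bump-function argument.
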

\begin{proof}Let $\omega \in \hoon$ be arbitrary but fixed. 
In view of the definition of the Fenchel conjugate it holds 
\begin{equation*}\begin{aligned}
[\widehat R(\zeta,\cdot)]^\star(\omega)=\sup_{v \in \hon} \, \Big(\dual{\omega}{v}_{\hon}- \int_{\O} g(\zeta) |v|\,dx\Big).
\end{aligned}\end{equation*}
If there exists $\widehat v \in \hon$ \ro{such} that $f(\widehat v):=\dual{\omega}{\widehat v}_{\hon}-\int_{\O} g(\zeta) | \widehat v|\,dx>0$, then 
\begin{equation*}\begin{aligned}
[\widehat R(\zeta,\cdot)]^\star(\omega)\geq \sup_{\gamma \in \R^+} \gamma f(\widehat v)=\infty.\end{aligned}\end{equation*}
If  $\dual{\omega}{ v}_{\hon} \leq \int_{\O} g(\zeta) | v|\,dx$ for all $v \in \hon$, then 
\[ [\widehat R(\zeta,\cdot)]^\star(\omega)= 0.\]
Since, by Hahn-Banach's theorem and $g(\zeta) \in \lo$, one has 
\begin{equation*}\begin{aligned}
K(\zeta)
=\{\phi \in \hon^\ast:\dual{\phi}{v}_{\hon}\leq \int_\O g (\zeta) |v |\,dx \quad \forall\,v \in \hon \},\end{aligned}\end{equation*} 
 the proof is now complete.
\end{proof}

\begin{lemma}\label{conj}
Let $\zeta \in \lo$. 
The Fenchel conjugate of $R(\zeta,\cdot): \hon \to [0,\infty]$, see \eqref{def:r0} and \eqref{def:r}, is given by the mapping  $[R(\zeta,\cdot)]^\star:\hoon \to [0,\infty]$,  
\[[R(\zeta,\cdot)]^\star(\omega)=\II_{\CC^\circ}(\omega-\kappa(\zeta)) \quad \forall\,\omega \in \hoon,\]where $\CC$ is the set defined in \eqref{def:c}.
\end{lemma}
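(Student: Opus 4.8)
The statement to be proved is a computation of the Fenchel conjugate of $R(\zeta,\cdot)$, where $R(\zeta,\eta)=\int_\Omega \kappa(\zeta)\eta\,dx$ for $\eta\in\CC$ and $+\infty$ otherwise. The claim is that $[R(\zeta,\cdot)]^\star(\omega)=\II_{\CC^\circ}(\omega-\kappa(\zeta))$. The plan is to argue directly from the definition of the conjugate, mirroring the structure used in the proof of Lemma \ref{conj0}. Fix $\omega\in\hoon$ and write
\begin{equation*}
[R(\zeta,\cdot)]^\star(\omega)=\sup_{v\in\hon}\Big(\dual{\omega}{v}_{\hon}-R(\zeta,v)\Big)=\sup_{v\in\CC}\Big(\dual{\omega}{v}_{\hon}-\int_\Omega\kappa(\zeta)v\,dx\Big),
\end{equation*}
since the supremum is only over the effective domain $\CC$ of $R(\zeta,\cdot)$. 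Because $\kappa(\zeta)\in\lo$ and $v\in\hon\subset\lo$, the integral equals $(\kappa(\zeta),v)_{\lo}$, which can be interpreted as a dual pairing $\dual{\kappa(\zeta)}{v}_{\hon}$ via the embedding $\lo\hookrightarrow\hoon$. Hence the supremum becomes $\sup_{v\in\CC}\dual{\omega-\kappa(\zeta)}{v}_{\hon}$.

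Now I would split into the two standard cases exactly as in Lemma \ref{conj0}. First, if $\omega-\kappa(\zeta)\in\CC^\circ$, i.e.\ $\dual{\omega-\kappa(\zeta)}{v}_{\hon}\le 0$ for all $v\in\CC$, then the supremum is $\le 0$; taking $v=0\in\CC$ shows it equals $0$, so $[R(\zeta,\cdot)]^\star(\omega)=0=\II_{\CC^\circ}(\omega-\kappa(\zeta))$. Second, if $\omega-\kappa(\zeta)\notin\CC^\circ$, there is some $\widehat v\in\CC$ with $\dual{\omega-\kappa(\zeta)}{\widehat v}_{\hon}>0$; since $\CC$ is a convex cone, $\gamma\widehat v\in\CC$ for all $\gamma\ge 0$, and $\sup_{\gamma\ge0}\gamma\,\dual{\omega-\kappa(\zeta)}{\widehat v}_{\hon}=+\infty$, so $[R(\zeta,\cdot)]^\star(\omega)=+\infty=\II_{\CC^\circ}(\omega-\kappa(\zeta))$. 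Combining the two cases over all $\omega\in\hoon$ completes the proof.

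The computation is essentially routine, so there is no serious obstacle; the only point that deserves a sentence of care is the identification of $\int_\Omega\kappa(\zeta)v\,dx$ with a duality pairing in $\hoon$, which is legitimate precisely because $\kappa(\zeta)\in\lo$ (guaranteed by Assumption \ref{assu:c}.\ref{it:c1} and $\zeta\in\lo$) and because $\lo$ embeds continuously into $\hoon$ in a way compatible with the $\lo$ inner product on $\hon$. One should also note explicitly that $\CC$ is a convex cone — it is defined in \eqref{def:c} as the nonnegative functions in $\hon$ — so that the scaling argument in the second case is valid and so that $\CC^\circ$ is genuinely the polar cone. Modulo these remarks, the argument is a verbatim adaptation of the proof of Lemma \ref{conj0}, with the mapping $g(\zeta)|\cdot|$ replaced by the \emph{linear} functional $v\mapsto(\kappa(\zeta),v)_{\lo}$, which is in fact what makes the conjugate land on the shifted polar cone $\CC^\circ-\{-\kappa(\zeta)\}$ rather than on a symmetric ball.
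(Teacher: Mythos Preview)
Your proposal is correct and follows essentially the same route as the paper: both reduce the conjugate to $\sup_{v\in\CC}\dual{\omega-\kappa(\zeta)}{v}_{\hon}$ after identifying $\int_\Omega\kappa(\zeta)v\,dx$ with the duality pairing. The only cosmetic difference is that the paper then recognizes this supremum as $(\II_\CC)^\star(\omega-\kappa(\zeta))$ and invokes the standard fact (citing Rockafellar) that the conjugate of the indicator of a convex cone is the indicator of its polar, whereas you prove this fact by hand via the two-case scaling argument.
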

\begin{proof}
In view of the definition of the Fenchel conjugate and \eqref{def:r} it holds 
\begin{equation*}\begin{aligned}
[R(\zeta,\cdot)]^\star(\omega)&=\sup_{v \in \CC} \, \dual{\omega}{v}_{\hon}- \int_\Omega \kappa(\zeta) \,  v \;dx
\\&=\sup_{v \in \CC} \, \dual{\omega-\kappa(\zeta)}{ v}_{\hon}
\\&=(\II_{\CC})^\star(\omega-\kappa(\zeta)) .
\end{aligned}\end{equation*}Since $\CC$ is a convex cone, the desired result can now be concluded, as $(\II_{\CC})^\star=\II_{\CC^\circ}$ \cite{rockaf}.
\end{proof}

\end{appendix}
\section*{Acknowledgment}This work was supported by the German Research Foundation under the DFG grant BE 7178/3-1 for the project ``Optimal Control of Viscous
Fatigue Damage Models for Brittle Materials: Optimality Systems". {The author is thankful to Gerd Wachsmuth (BTU Cottbus-Senftenberg) for pointing out an inaccuracy in the proof of Proposition \ref{lem:ess} in a previous version of the manuscript.}

\end{document}